\newtheorem{theorem}{Theorem}[section]
\newtheorem{lem}{Lemma}[section]
\newtheorem{defn}{Definition}[section]
\newtheorem{corollary}{Corollary}[section]
\newtheorem{rem}{\bf Remark}[section]
\newtheorem{claim}{Claim}
\newtheorem{prop}{Proposition}[section]
\newtheorem{con}{Condition}[section]
\begin{document}
\title{ Double variational principle  for  mean dimensions with sub-additive potentials
       \footnotetext {*Corresponding author}
		\footnotetext {2010 Mathematics Subject Classification: 37B40, 37C45}}

\author{ Yunping Wang,  Ercai Chen*\\
\small    School of Mathematical Sciences and Institute of Mathematics, Nanjing Normal University,\\
\small   Nanjing 210046, Jiangsu, P.R. China\\

\small    yunpingwangj@126.com,
 ecchen@njnu.edu.cn
}

\date{}

\maketitle{}
\begin{abstract}
In this  paper, we introduce mean dimension quantities  with  sub-additive potentials.  We define mean  dimension with sub-additive potentials and mean metric dimension with sub-additive potentials, and establish a double variational principle for sub-additive potentials.
\end{abstract}
\noindent
\textbf{ Keywords:} mean dimension,  rate distortion dimension, sub-additive potentials, variational principle 

\section{Introduction}
\subsection{Backgrounds}
A pair $(\mathcal{X}, T)$ is called a dynamical system if $\mathcal{X}$ is a compact metrizable space with metric $d$ and $T: \mathcal{X}\rightarrow X$ is a homeomorphism. In classic ergodic theory, measure theoretic entropy and topological entropy are important determinants of complexity in dynamical systems. The important relationship between these two quantities is the well-know variational principle.

Topological pressure is a generalization of topological entropy for a dynamical system. The concept was first introduced by Ruelle
\cite{Rue} in 1973 for expansive maps acting on compact metric spaces.
And he set up a variational principle for the topological pressure in the same paper. In \cite{Wal}, Walter generalized theses results to general continuous maps on a compact metric spaces. Given  a continuous map $T: \mathcal{X} \rightarrow \mathcal{X}$ on a compact metric space, the topological pressure of a continuous function $\varphi: \mathcal{X}\rightarrow \mathbb{R}$ is defined by
\begin{align*}
P(\varphi, T)=\lim\limits_{\epsilon\rightarrow 0} \limsup\limits_{n\rightarrow \infty} \dfrac{1}{n} \log \sup\limits_{E} \sum\limits_{x\in E}\exp \sum\limits_{i=0}^{n-1}\varphi(T^{i}x),
\end{align*}
with the supremum taken over all $(n, \epsilon)$-separated sets $E\subset X$. We recall that a set $E\subset X$ is said to be $(n, \epsilon)$-separated if for any $x, y\in E$ with $x\neq y$ there exists $k\in \left\lbrace 0, \cdots, n-1 \right\rbrace $ such that $d(T^{i}x, T^{i}y)>\epsilon$.
Take $\varphi=0$ we recover the notion of the topological entropy $h(T)$ of  the map $T$ given by
\begin{align*}
h(T)=\lim\limits_{\epsilon \rightarrow 0}\limsup\limits_{n\rightarrow \infty} \dfrac{1}{n} \log N(n, \epsilon)
\end{align*}
where $N(n, \epsilon)$ denotes the maximal cardinality of an $(n,\epsilon)$-separated set. The variational principle formulated by Walter can be stated precisely as follows:
\begin{align*}
P(\varphi, T)=\sup\limits_{\mu}\left( h_{\mu}(T)+\int_{X} \varphi d \mu \right),
\end{align*}
with the supremum taken over all $T$-invariant probability measure $\mu$ on $\mathcal{X}$, and $h_{\mu}(T)$ denotes the measure-theoretical entropy of $\mu$.

The theories of   topological pressure, variational principle and equilibrium states play a fundamental role in statistical mechanics, ergodic theory and dynamical systems (see \cite{Bow}, \cite{Kel}, \cite{Rue1}, \cite{WP}).
Since the works of Bowen \cite{Bow79} and Ruelle \cite{Rue2}, the topological pressure has become a basic tool for studying dimension in conformal dynamical systems. In 1984, Pesin and Pitskel \cite{PS32} defined the topological pressure of additive potentials for non-compact subsets of compact metric spaces and proved the variational principle under some supplementary conditions. In 1988, the sub-additive thermodynamic formalism was introduced by Falconer in \cite{Fal} and he proved the variational principle for topological pressure under some Lipschitz conditions and bounded distortion assumption on the sub-additive potentials. In 1996, Barreira \cite{Bar} defined the topological pressure for an arbitrary sequence of continuous functions on a arbitrary subset of compact metric spaces and proved the variational principle under a strong convergence assumption on the potentials which extended the work of Pesin and Pitskel.   Cao,  Feng and  Huang \cite{Cao} introduced the sub-additive topological pressure via separated sets in \cite{Cao} on general compact metric spaces, and obtained the variational principle for sub-additive potentials without any additional assumptions on the sub-additive potentials. For more research on sub-additive topological pressure,  refer to the literatures \cite{ Zhang, LB, Zhao, Yun}. 

Mean  dimension is a conjugacy invariant of  dynamical systems which was first introduced by  Gromov \cite{GRO}.  In 2000, Lindenstrauss and Weiss \cite{LWE} used it to answer  an open question raised by Auslander \cite{AU} that whether every minimal system $(\mathcal{X},T)$ can be imbedded in $[0, 1]^{\mathbb{Z}}$.  It turns out  that   mean dimension  is the right invariant to study  for the problem  of existence of an embedding into $(([0,1]^{D})^{\mathbb Z}, \sigma)$. Mean dimesion can be  applied to solve imbedding problems in dynamical systems (see \cite{GT}, \cite{LT14},\cite{GLT16}).   The metric mean dimension was introduced in \cite{LWE} and they proved that metric mean dimension is  an upper bound of the  mean dimension. It allowed them to establish the relationship between the mean dimension and the topological entropy of dynamical systems, which shows that each system with finite topological entropy has zero mean dimension.  This  invariant enables one to distinguish systems with infinite topological entropy.  In \cite{LT18}, Lindenstrauss and Tsukamoto   established new variational principles connecting rate distortion function to metric mean dimension, which reveals a close relation between mean dimension and rate distortion theory. This was further developed by \cite{LT}. They  injected ergodic-theoretic concepts into mean dimension and developed  a double variational principle between mean dimension  and rate distortion dimension. They  proved the mean dimension is equaled to the rate distortion dimension with respect to two variables (metric and measures).
Recently,  Tsukamoto \cite{MT}  introduced a mean dimension analogue of topological pressure and proved the pressure version of double variational principle which extended the results of \cite{LT}.
The  variational principle formulated by Tsukamoto can be stated  precisely as follows:
\begin{theorem} \label{main2}
	Let $(\mathcal{X}, T)$ be a dynamical system with the marker property  and let $\varphi: \mathcal{X} \rightarrow \mathbb{R}$ be a continuous function. Then
	\begin{align*}
	{\rm mdim}(\mathcal{X}, T, \varphi)&=\min\limits_{d \in \mathcal{D}(\mathcal{X})} \sup\limits_{\mu \in M(\mathcal{X},T)}\left( \overline{{\rm rdim}}(\mathcal{X}, T, d, \varphi, \mu)+ \int_{\mathcal{X}} \varphi d \mu\right) \\&=\min\limits_{d \in \mathcal{D}(\mathcal{X})} \sup\limits_{\mu \in M(\mathcal{X},T)}\left( \underline{{\rm rdim}}(\mathcal{X}, T, d, \varphi, \mu)+ \int_{\mathcal{X}} \varphi d \mu\right)
	\end{align*}
\end{theorem}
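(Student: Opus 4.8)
The plan is to sandwich $\mathrm{mdim}(\mathcal{X},T,\varphi)$ between the two outer expressions. Put
\[
R^{+}=\inf_{d\in\mathcal{D}(\mathcal{X})}\ \sup_{\mu\in M(\mathcal{X},T)}\Bigl(\overline{\mathrm{rdim}}(\mathcal{X},T,d,\varphi,\mu)+\int_{\mathcal{X}}\varphi\,d\mu\Bigr),
\]
and let $R^{-}$ denote the same quantity with $\underline{\mathrm{rdim}}$ in place of $\overline{\mathrm{rdim}}$. Since $\underline{\mathrm{rdim}}\le\overline{\mathrm{rdim}}$ pointwise in $(d,\mu)$, one has $R^{-}\le R^{+}$ at no cost, so it suffices to prove
\[
R^{+}\le\mathrm{mdim}(\mathcal{X},T,\varphi)\le R^{-}
\]
and to check that the infimum defining $R^{+}$ is attained; these three facts collapse the chain and upgrade the infima to minima. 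I would carry everything out through the scale-localized objects: the $\epsilon$-width mean dimension with potential $\mathrm{mdim}_{\epsilon}(\mathcal{X},T,\varphi,d)$, built from the dynamical metrics $d_{N}(x,y)=\max_{0\le n<N}d(T^{n}x,T^{n}y)$ and the Birkhoff sums $S_{N}\varphi=\sum_{n=0}^{N-1}\varphi\circ T^{n}$, with $\lim_{\epsilon\to0}\mathrm{mdim}_{\epsilon}(\mathcal{X},T,\varphi,d)=\mathrm{mdim}(\mathcal{X},T,\varphi)$ independently of $d$; and the $\varphi$-weighted rate distortion function $R(\mathcal{X},T,d,\varphi,\mu,\epsilon)$, whose $\limsup$ and $\liminf$ normalizations by $\log(1/\epsilon)$ give $\overline{\mathrm{rdim}}$ and $\underline{\mathrm{rdim}}$.

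For the inequality $R^{+}\le\mathrm{mdim}(\mathcal{X},T,\varphi)$ I would fix $\delta>0$ and construct one metric $d_{\delta}\in\mathcal{D}(\mathcal{X})$ with $\overline{\mathrm{rdim}}(\mathcal{X},T,d_{\delta},\varphi,\mu)+\int\varphi\,d\mu\le\mathrm{mdim}(\mathcal{X},T,\varphi)+\delta$ for every $\mu\in M(\mathcal{X},T)$; this is the only step using the marker property. The marker property lets one cut orbit segments into blocks of prescribed lengths, and combining this with the definition of mean dimension with potential produces, for every $N$, a continuous $\epsilon_{N}$-embedding $\Phi_{N}\colon\mathcal{X}\to K_{N}$ into a finite polyhedron whose $\varphi$-weighted dimension (in the sense of \cite{MT}) is at most $N\bigl(\mathrm{mdim}(\mathcal{X},T,\varphi)+\delta\bigr)$. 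Taking $d_{\delta}$ to be a carefully weighted convergent series of pullbacks of fixed metrics on the polyhedra $K_{N}$, arranged so as to induce the topology of $\mathcal{X}$, one gains that any $\mu$-typical $\epsilon$-reconstruction for $d_{\delta}$ can be routed through the \emph{finite-dimensional} $K_{N}$; the standard rate distortion estimate for finite-dimensional sources then delivers the displayed bound after letting $\epsilon\to0$ and $N\to\infty$. Finally, taking $\delta=2^{-k}$ and combining the metrics $d_{2^{-k}}$ into one convergent weighted sum $d_{\infty}$ that dominates a positive multiple of each $d_{2^{-k}}$ --- so $d_{\infty}$ has $\varphi$-weighted rate distortion dimension no larger than any $d_{2^{-k}}$, a constant shift inside $\log(1/\epsilon)$ being irrelevant after normalization --- one sees that the infimum defining $R^{+}$ is a minimum, attained at $d_{\infty}$ (which then also realizes the minimum in $R^{-}$).

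For the inequality $\mathrm{mdim}(\mathcal{X},T,\varphi)\le R^{-}$ I would fix an arbitrary $d\in\mathcal{D}(\mathcal{X})$ and, since mean dimension with potential is metric-independent, compute it with $d$; the task reduces to exhibiting one $\mu\in M(\mathcal{X},T)$ with $\underline{\mathrm{rdim}}(\mathcal{X},T,d,\varphi,\mu)+\int\varphi\,d\mu\ge\mathrm{mdim}(\mathcal{X},T,\varphi)$. For each small $\epsilon$ and large $N$, choose a configuration nearly witnessing $\mathrm{mdim}_{\epsilon}$ at block length $N$ --- a finite subset of $\mathcal{X}$ separated at scale $\epsilon$ in the width-dimensional sense and with near-maximal $\varphi$-weight $\sum\exp\!\bigl(S_{N}\varphi\bigr)$ --- form the normalized $\varphi$-weighted counting measure on it, spread it out by $\tfrac{1}{N}\sum_{n=0}^{N-1}(T^{n})_{\ast}$, and pass to a weak-$\ast$ limit $\mu\in M(\mathcal{X},T)$. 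The crux is a converse rate distortion estimate: the rate distortion function of $\mu$ at a scale comparable to $\epsilon$ is bounded below, up to $o(\log(1/\epsilon))$ and up to subtracting $\int\varphi\,d\mu$, by $\tfrac{1}{N}\log$ of that $\varphi$-weight, since a code of smaller mutual-information rate would, by a pigeonhole argument for mutual information, merge two points of the configuration whose orbits remain within the separation scale. This is Tsukamoto's ``tilting'' of the Lindenstrauss--Tsukamoto estimate by $e^{S_{N}\varphi}$, combined with an ergodic decomposition reducing the lower bound to ergodic measures and a diagonal argument over a sequence $\epsilon\to0$, so that a single $\mu$ works at arbitrarily fine scales and yields $\underline{\mathrm{rdim}}$ rather than merely a subsequential bound.

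The decisive difficulty is this converse rate distortion estimate: one must convert a purely topological, dimension-theoretic separation property of the finite configurations into a genuine lower bound on the mutual-information rate of the \emph{limiting invariant} measure, while keeping the weights $e^{S_{N}\varphi}$ and the subtracted average $\int\varphi\,d\mu$ in balance and coping with the facts that the rate distortion function $\mu\mapsto R(\mathcal{X},T,d,\varphi,\mu,\epsilon)$ is only semicontinuous under weak-$\ast$ limits and that the empirical measures are only asymptotically invariant. A secondary obstacle is the scale bookkeeping --- fixing the constant that relates the mean-dimension scale to the rate distortion scale, and passing from control at a sequence of scales to the $\liminf$ defining $\underline{\mathrm{rdim}}$ --- so that this lower bound and the upper bound from the marker construction meet exactly at $\mathrm{mdim}(\mathcal{X},T,\varphi)$. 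Once these estimates are in place, the squeeze, the series construction of the metrics, and the min-attainment are routine.
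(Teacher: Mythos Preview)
Your sandwich strategy and the upper-bound half are essentially right and match Tsukamoto's route as summarized in the paper: construct, via the marker property and polyhedral embeddings, a single metric for which $\overline{\mathrm{mdim}}_{M}=\mathrm{mdim}$, and then invoke the easy inequality $\overline{\mathrm{rdim}}+\int\varphi\,d\mu\le\overline{\mathrm{mdim}}_{M}$. The genuine gap is in the lower-bound half, $\mathrm{mdim}(\mathcal{X},T,\varphi)\le R^{-}$.

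The paper (following \cite{MT,LT}) does \emph{not} go directly from width dimension to rate distortion. It inserts \emph{mean Hausdorff dimension with potential} as an intermediate: first $\mathrm{mdim}\le\mathrm{mdim}_{H}$ (Theorem~\ref{bound}), then $\mathrm{mdim}_{H}\le\sup_{\mu}(\underline{\mathrm{rdim}}+\int\varphi\,d\mu)$ (Theorem~\ref{thmc}). The point of the detour is that Hausdorff dimension unlocks a dynamical Frostman lemma (Lemma~\ref{lem3}): after restricting to a level set of $\varphi_{N_k}/N_k$ one obtains probability measures $\nu_{k}$ satisfying the \emph{scaling law} $\nu_{k}(E)\le(\mathrm{diam}(E,\overline{d}_{N_k}))^{c(s-t)N_k}$ for \emph{all} small $E$. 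It is precisely this multi-scale bound, not a single-scale cardinality count, that Lemma~\ref{ml2} converts into a lower bound on $R_{\mu}(\epsilon)$ at \emph{every} small $\epsilon$, hence on $\underline{\mathrm{rdim}}$; the Misiurewicz averaging then transfers the bound to an invariant limit $\mu$.

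Your proposed witness --- a finite set ``separated at scale $\epsilon$ in the width-dimensional sense'' carrying weight $\sum\exp(S_{N}\varphi)$ --- does not exist in the form you describe and would not do the job. First, $\mathrm{Widim}_{\epsilon}$ concerns $\epsilon$-embeddings into low-dimensional polyhedra; there is no finite configuration that witnesses it the way a separated set witnesses entropy or pressure. Second, $\exp(S_{N}\varphi)$ is the topological-pressure weight; the weight relevant to mean dimension with potential at scale $\epsilon$ is $(1/\epsilon)^{S_{N}\varphi}$, a power-law rather than exponential normalization. Third, and decisively, a counting measure on a finite $\epsilon$-separated set controls rate distortion only at that one scale; weak-$\ast$ limits of such measures carry no information at scales $\epsilon'\ll\epsilon$, so your ``diagonal argument over $\epsilon\to0$'' cannot produce a single $\mu$ with large $\underline{\mathrm{rdim}}$. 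This is exactly the obstacle the Hausdorff/Frostman route is built to bypass. (A side remark: the rate distortion function itself is not $\varphi$-weighted; the potential enters only additively as $\int\varphi\,d\mu$.)
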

\noindent The proof of Theorem \ref{main2} is along the following steps:
\begin{itemize}
	\item [1.] Define metric mean dimension with potential and prove metric mean dimension with potential bounds rate distortion dimension plus function integral. 
	\item [2.] Define mean Hausdorff dimension with potential and construct a invariant measure by Frostman's lemma \cite{How95}.
	\item[3.] Prove the dynamical version of  Pontrjagin-Schnirelmann's theorem \cite{PS32}: for a compact metrizable space $\mathcal{X}$ they can construct a  metric $d$ on it for which the upper metric dimension with potential is equal to the topological dimension with potential.

\end{itemize}

In this paper, we will introduce mean dimension  quantities with sub-additive potential (mean dimension with sub-additive potential, metric mean dimension with sub-additive potential, mean Hausdorff dimension with sub-additive) and apply  Tsukamoto's steps to prove a double variational  principle with  sub-additive potentials.  We  should emphasize here that technical difficulties arising from sub-additive potentials need to overcome. The paper is organized as follows. In Section \ref{mutal}, we introduce mean dimension quantities for sub-additive potentials and recall some basic properties of mutual information. In Section\ref{a}, we prove Theorem \ref{bound} and Proposition \ref{pro}. In Section \ref{b}, we give a proof of Theorem \ref{thmc}. In Section \ref{d},  we give the proof of Theorem \ref{aa}.
\subsection{Statement of the main result }

\begin{defn}
	A dynamical system $(\mathcal{X}, T)$  is said to have the marker property if for any $N>0$, there exists an open set $U\subset \mathcal{X}$ satisfying
	$$\mathcal{X}=\bigcup\limits_{n\in \mathbb{Z}} T^{-n} U, ~~U\cap T^{-n}U=\emptyset~(\forall 1\leq n \leq N).$$
\end{defn}
\begin{defn}
	A sequence  $\mathcal{F}=\left\lbrace \varphi_{n} \right\rbrace_{n=1}^{\infty}$ of functions on $\mathcal{X}$ is called sub-additive if each $\varphi_{n}$ is continuous real-value function on $\mathcal{X}$ such that
	$$\varphi_{n+m}(x)\leq \varphi_{n}(x)+\varphi_{m}(T^{n}x),~\forall x\in \mathcal{X}, m,n\in \mathbb{N}.$$
	
\end{defn}
For a $T$-invariant Borel probability measure $\mu$, denote $$ \mathcal{F}_{*}(\mu)=\lim\limits_{n\rightarrow \infty} \dfrac{1}{n} \int  \varphi_{n} d \mu.$$
The existence of the above limit follows from a sub-additive argument. We call $\mathcal{F}_{*}(\mu)$ the Lyapunov exponent of $\mathcal{F}$ with respect to $\mu$. It also takes a value in $[-\infty, \infty)$.

Let ${\rm var}_{\epsilon}(\varphi, d)=\sup\{ |\varphi(x)-\varphi(y)|,~ d(x,y)<\epsilon\}.$
If $\mathcal{F}=\left\lbrace \varphi_{n} \right\rbrace_{n=1}^{\infty} $ satisfies the following  assumption:
\begin{align*}
\lim\limits_{ \epsilon \rightarrow 0} \limsup\limits_{n\rightarrow \infty}\dfrac{ {\rm var}_{\epsilon}(\varphi_{n}, d_{n})}{n}=0
\end{align*}
then  $\mathcal{F}$ has bounded distortion.

We denote $\mathcal{D}(\mathcal{X})$ and $\mathcal{M}(\mathcal{X}, T)$ the sets of metrics and invariant probability measures on it respectively.
As a main result, we  obtain the following variational principle.
\begin{theorem}\label{main1}
	Assume that $\overline{{\rm mdim_{M}}}(\mathcal{X}, T, d)<\infty$ for all $d\in \mathcal{D}(X)$.
	Let $\mathcal{F}=\left\lbrace  \varphi_{n} \right\rbrace_{n=1}^{\infty}  $ be a sub-additive potential with bounded distortion  and let $(\mathcal{X}, T)$ be a dynamical system  with the maker property. If there exists $K>0$ such that
	$|\varphi_{n+1}(x)-\varphi_{n}(x)|\leq K, ~\forall x\in \mathcal{X}~,n\in \mathbb{N}.$  Then
	
	\begin{align*}
	{\rm{mdim}}(\mathcal{X},T,  \mathcal{F})&=\min_{{\bf d} \in \mathcal{D}(\mathcal{X})}\sup_{\mu \in M(\mathcal{X}, T)}\left( \overline{\rm{rdim}}(\mathcal{X}, T, d, \mu)+ \mathcal{F}_{*}(\mu)\right)
	\\&= \min_{{\bf d} \in \mathcal{D}(\mathcal{X})}\sup_{\mu \in M(\mathcal{X}, T)}\left( \underline{\rm{rdim}}(\mathcal{X}, T, d, \mu)+ \mathcal{F}_{*}(\mu)\right).	
	\end{align*}

\end{theorem}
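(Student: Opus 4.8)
The plan is to follow Tsukamoto's three-step strategy adapted to the sub-additive setting, which decomposes the double equality into an upper bound and a lower bound. Write $\mathrm{mdim}(\mathcal{X},T,\mathcal{F})$ for the mean dimension with sub-additive potential and, for each metric $d$ and invariant measure $\mu$, let $R(d,\mu)=\overline{\mathrm{rdim}}(\mathcal{X},T,d,\mu)+\mathcal{F}_*(\mu)$ (and similarly with $\underline{\mathrm{rdim}}$). Since $\underline{\mathrm{rdim}}\le\overline{\mathrm{rdim}}$ always, the two right-hand sides satisfy $\inf_d\sup_\mu(\underline{\mathrm{rdim}}+\mathcal{F}_*)\le\inf_d\sup_\mu(\overline{\mathrm{rdim}}+\mathcal{F}_*)$, so it suffices to prove
\begin{align*}
\inf_{d\in\mathcal{D}(\mathcal{X})}\sup_{\mu\in M(\mathcal{X},T)}\bigl(\overline{\mathrm{rdim}}(\mathcal{X},T,d,\mu)+\mathcal{F}_*(\mu)\bigr)\le \mathrm{mdim}(\mathcal{X},T,\mathcal{F})\le \inf_{d\in\mathcal{D}(\mathcal{X})}\sup_{\mu\in M(\mathcal{X},T)}\bigl(\underline{\mathrm{rdim}}(\mathcal{X},T,d,\mu)+\mathcal{F}_*(\mu)\bigr),
\end{align*}
and then the chain of inequalities collapses to equalities.

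For the first inequality (the ``$\le\mathrm{mdim}$'' direction, i.e. that mean dimension dominates the rate distortion side for \emph{every} metric), I would invoke Theorem \ref{bound} together with Proposition \ref{pro}: Theorem \ref{bound} should assert that the metric mean dimension with sub-additive potential $\overline{\mathrm{mdim}}_M(\mathcal{X},T,d,\mathcal{F})$ bounds $\sup_\mu(\overline{\mathrm{rdim}}(\mathcal{X},T,d,\mu)+\mathcal{F}_*(\mu))$ from above, and Proposition \ref{pro} should relate the metric mean dimension with sub-additive potential to $\mathrm{mdim}(\mathcal{X},T,\mathcal{F})$ (giving $\mathrm{mdim}(\mathcal{X},T,\mathcal{F})\le\overline{\mathrm{mdim}}_M(\mathcal{X},T,d,\mathcal{F})$ for all $d$, or at least after taking the infimum). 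Here the bounded-distortion hypothesis and the uniform bound $|\varphi_{n+1}-\varphi_n|\le K$ enter: the former lets one pass freely between $\frac{1}{n}\varphi_n(x)$ and $\frac1n\varphi_n(y)$ along $(n,\epsilon)$-close orbit segments when comparing separated-set sums, and the latter controls the telescoping differences needed so that the Kingman-type limit $\mathcal{F}_*(\mu)=\lim\frac1n\int\varphi_n\,d\mu$ interacts well with the $\limsup_n$ defining the mean-dimension quantities. One also needs $\overline{\mathrm{mdim}}_M(\mathcal{X},T,d)<\infty$ to ensure all quantities are finite and the subtractions are legitimate.

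For the second inequality, $\mathrm{mdim}(\mathcal{X},T,\mathcal{F})\le\inf_d\sup_\mu(\underline{\mathrm{rdim}}+\mathcal{F}_*)$, this is the substantive part and is where Theorem \ref{thmc} and Theorem \ref{aa} come in. Following Step 3 of Tsukamoto's scheme, I would use the dynamical Pontrjagin--Schnirelmann theorem with sub-additive potential (Theorem \ref{aa}) to construct, for any $\delta>0$, a metric $d$ on $\mathcal{X}$ whose upper metric mean dimension with sub-additive potential is within $\delta$ of the topological mean dimension with sub-additive potential, so that the $\inf_d$ on the right is realized (up to $\delta$) by a good metric; the marker property is exactly what makes this construction possible, just as in \cite{MT}. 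Then, for that metric, Step 2 (Theorem \ref{thmc}, the mean Hausdorff dimension version) produces via a Frostman-lemma argument an invariant measure $\mu$ witnessing $\underline{\mathrm{rdim}}(\mathcal{X},T,d,\mu)+\mathcal{F}_*(\mu)\ge\mathrm{mdim}(\mathcal{X},T,\mathcal{F})-\delta$. Letting $\delta\to0$ closes the loop.

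The main obstacle I anticipate is Step 2 / Theorem \ref{thmc}: in the additive case one builds the Frostman measure using a potential weight $\exp(\sum_{i=0}^{n-1}\varphi(T^i x))$ that is multiplicative under concatenation of orbit blocks, whereas $\exp\varphi_n$ is only sub-multiplicative, so the mass-distribution estimates and the passage from the constructed measure on finite blocks to a genuine $T$-invariant measure require the uniform bounds $|\varphi_{n+1}-\varphi_n|\le K$ and bounded distortion to tame the loss in sub-multiplicativity, and one must check that $\mathcal{F}_*(\mu)$ of the limiting measure is not smaller than the ``averaged'' potential value read off from the finite-block construction (using upper semicontinuity of $\mu\mapsto\mathcal{F}_*(\mu)$, which holds for sub-additive families). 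A secondary technical point is verifying that the various mean-dimension-with-sub-additive-potential quantities are well defined, i.e. that the relevant $\limsup_n\frac1n(\cdots)$ are finite and monotone in $\epsilon$, which again leans on $\overline{\mathrm{mdim}}_M(\mathcal{X},T,d)<\infty$ and the hypothesis $|\varphi_{n+1}-\varphi_n|\le K$.
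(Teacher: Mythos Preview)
Your overall decomposition into the two inequalities
\[
\inf_{d}\sup_{\mu}\bigl(\overline{\mathrm{rdim}}+\mathcal{F}_*\bigr)\le \mathrm{mdim}(\mathcal{X},T,\mathcal{F})\le \inf_{d}\sup_{\mu}\bigl(\underline{\mathrm{rdim}}+\mathcal{F}_*\bigr)
\]
is fine, but you have the ingredients assigned to the wrong sides, and as written neither half goes through. For the left inequality $\inf_d\sup_\mu(\overline{\mathrm{rdim}}+\mathcal{F}_*)\le \mathrm{mdim}$ you need to exhibit \emph{one} metric $d^\ast$ for which $\sup_\mu(\overline{\mathrm{rdim}}(d^\ast)+\mathcal{F}_*)\le \mathrm{mdim}$; this is exactly what Theorem~\ref{aa} (the dynamical Pontrjagin--Schnirelmann construction using the marker property) combined with Proposition~\ref{pro} gives: Proposition~\ref{pro} yields $\sup_\mu(\overline{\mathrm{rdim}}(d)+\mathcal{F}_*)\le \overline{\mathrm{mdim}}_M(\mathcal{X},T,d,\mathcal{F})$ for every $d$, and Theorem~\ref{aa} produces a $d^\ast$ with $\overline{\mathrm{mdim}}_M(d^\ast,\mathcal{F})=\mathrm{mdim}(\mathcal{X},T,\mathcal{F})$. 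Your proposed route via Theorem~\ref{bound} and Proposition~\ref{pro} only yields $\sup_\mu(\overline{\mathrm{rdim}}+\mathcal{F}_*)\le \overline{\mathrm{mdim}}_M$ and $\mathrm{mdim}\le \overline{\mathrm{mdim}}_M$, two upper bounds on the same quantity, from which $\sup_\mu(\overline{\mathrm{rdim}}+\mathcal{F}_*)\le \mathrm{mdim}$ does not follow. (You have also swapped the content of Theorem~\ref{bound} and Proposition~\ref{pro}: the former says $\mathrm{mdim}\le \mathrm{mdim}_H\le \underline{\mathrm{mdim}}_M$, the latter says $\overline{\mathrm{rdim}}+\mathcal{F}_*\le \overline{\mathrm{mdim}}_M$.)

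Conversely, the right inequality $\mathrm{mdim}\le \inf_d\sup_\mu(\underline{\mathrm{rdim}}+\mathcal{F}_*)$ requires $\mathrm{mdim}\le \sup_\mu(\underline{\mathrm{rdim}}(d)+\mathcal{F}_*)$ for \emph{every} metric $d$, so producing one good metric via Theorem~\ref{aa} is useless here. The paper's route is: for an arbitrary $d$, pass to $d'\le d$ with tame growth of covering numbers; then Theorem~\ref{bound} gives $\mathrm{mdim}\le \mathrm{mdim}_H(\mathcal{X},T,d',\mathcal{F})$, and Theorem~\ref{thmc} (the Frostman-type construction) gives $\mathrm{mdim}_H(d',\mathcal{F})\le \sup_\mu(\underline{\mathrm{rdim}}(d',\mu)+\mathcal{F}_*)\le \sup_\mu(\underline{\mathrm{rdim}}(d,\mu)+\mathcal{F}_*)$, the last step because $d'\le d$. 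Your argument, which builds the measure only for the special metric coming from Theorem~\ref{aa}, establishes nothing about the infimum over all metrics. Once the roles are exchanged as above, the rest of your outline (including the remarks on where bounded distortion and $|\varphi_{n+1}-\varphi_n|\le K$ enter) is essentially the paper's approach.
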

The Theorem \ref{main1}  can be obtained from the following theorems.

{\bf Step 1}:  prove  mean Hausdorff dimension with sub-additive potentials bounds  mean dimension with sub-additive potentials and show that the rate-distortion dimension is no more than the metric mean  dimension plus the Lyapunov exponent of $\mathcal{F}$.
\begin{theorem}({=Theorem \ref{bound}})
	Let $(\mathcal{X},T)$ be a dynamical system with a metric $d$, then
	$${\rm mdim}_{H}(\mathcal{X},T, d, \mathcal{F})\leq \underline{\rm{mdim}}_{M}(\mathcal{X},T, d, \mathcal{F}).$$ If $\mathcal{F}$ satisfies bounded distortion and there exists $K>0$ such that $|\varphi_{n+1}-\varphi_{n}|<K$ for every $n$, then
	$$ {\rm mdim}(\mathcal{X},T,  \mathcal{F})\leq {\rm mdim}_{H}(\mathcal{X},T, d, \mathcal{F}).$$ 	
\end{theorem}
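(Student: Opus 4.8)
The statement has two parts, and I would treat them separately. For the first inequality $\mathrm{mdim}_H(\mathcal{X},T,d,\mathcal{F}) \leq \underline{\mathrm{mdim}}_M(\mathcal{X},T,d,\mathcal{F})$, the plan is to follow the classical comparison between Hausdorff-type and box-type dimensions, now carried out at each scale $\epsilon$ with the potential weight attached. Concretely, for a fixed $\epsilon$ and a fixed block length $n$, one covers $\mathcal{X}$ by a minimal collection of $d_n$-balls of radius $\epsilon$; this collection witnesses the metric mean dimension quantity at scale $\epsilon$. One then checks that this same cover is admissible in the definition of the (weighted) $\epsilon$-mean Hausdorff dimension, so that the infimum defining the Hausdorff quantity is bounded by the sum over the cover of the exponential-of-potential weights, which is exactly what the box quantity controls. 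Dividing by $n$, letting $n \to \infty$, and then letting $\epsilon \to 0$ (taking $\liminf$ on the box side to match the definition with $\underline{\mathrm{mdim}}_M$) gives the inequality. The only subtlety is bookkeeping: making sure the supremum of $\varphi_n$ over an $\epsilon$-ball used in the Hausdorff definition is comparable to the value of $\varphi_n$ at the center used in the box definition — but this is a definitional matter, not one that needs bounded distortion, since both are defined via the same $\sup$ over balls.

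For the second inequality $\mathrm{mdim}(\mathcal{X},T,\mathcal{F}) \leq \mathrm{mdim}_H(\mathcal{X},T,d,\mathcal{F})$, the approach is to bound the topological (widthwise) mean dimension with potential by the mean Hausdorff dimension with potential, scale by scale. The standard mechanism is that a set carrying small $\epsilon$-Hausdorff content with respect to the weighted gauge admits, for suitable parameters, a map to a polyhedron of controlled dimension with $\epsilon$-small fibers; combined with the potential contribution this yields an upper bound on $\mathrm{Widim}_\epsilon$ of the $n$-th block, weighted by $\varphi_n$. The bounded distortion hypothesis enters here: it guarantees that replacing $\varphi_n$ by its oscillation-adjusted version over $\epsilon$-balls costs only $o(n)$, so the weighted widthwise dimension at scale $\epsilon$ and the mean Hausdorff quantity at scale $\epsilon$ differ by a quantity that vanishes after dividing by $n$ and sending $\epsilon \to 0$. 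The uniform bound $|\varphi_{n+1} - \varphi_n| \leq K$ is used to control the telescoping behavior of the potentials and to ensure the relevant limits (over $n$, over $\epsilon$, and over refining covers) exist and can be interchanged; in particular it keeps $\frac{1}{n}\varphi_n$ bounded so that the various $\limsup$'s in the definitions are finite and the sub-additive averaging arguments apply.

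The step I expect to be the main obstacle is the second inequality, specifically the passage from a cover with small weighted Hausdorff content to a bound on the weighted widthwise dimension. In the unweighted (classical mean dimension) setting this is essentially the Pontrjagin–Schnirelmann-type argument relating covering dimension to Hausdorff dimension; porting it to the weighted setting requires constructing the auxiliary map to a polyhedron so that, cube by cube in the cover, the dimension budget spent is compensated against the potential weight $e^{\varphi_n}$ rather than a plain count. Handling the sub-additivity of $\mathcal{F}$ — where $\varphi_n$ along a block does not split cleanly — is precisely where "technical difficulties arising from sub-additive potentials" appear: one cannot simply factor the block potential as a Birkhoff-type sum, so the cover-refinement and the choice of polyhedral resolution must be done with the $\sup$-over-balls definition of the weighted gauge throughout, using bounded distortion to absorb the mismatch. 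I would organize the proof so that this weighted Pontrjagin–Schnirelmann estimate is isolated as a self-contained lemma at a single scale $\epsilon$ and single block length $n$, and then the theorem follows by the routine $\frac{1}{n}$-averaging and $\epsilon \to 0$ limiting procedure.
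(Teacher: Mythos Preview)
Your outline for the first inequality is correct and matches the paper: take an open cover of $(\mathcal{X},d_N)$ by sets of diameter $<\epsilon$, observe that
\[
H_\epsilon^s(\mathcal{X},d_N,\varphi_N)\le \epsilon^s\cdot \#(\mathcal{X},d_N,\varphi_N,\epsilon),
\]
read off $\dim_H(\mathcal{X},d_N,\varphi_N,\epsilon)\le \log\#(\mathcal{X},d_N,\varphi_N,\epsilon)/\log(1/\epsilon)$, then divide by $N$ and let $N\to\infty$, $\epsilon\to 0$.

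For the second inequality your high-level strategy is right, and your instinct to isolate a single-scale lemma is exactly how the paper is organized. But the mechanism you name is not the one that works. This is \emph{not} a Pontrjagin--Schnirelmann-type argument (that theorem constructs a metric realizing topological dimension as upper box dimension, and appears elsewhere in the paper, in the proof of Theorem \ref{le}). The actual engine is Lemma \ref{lema} (taken from \cite{MT}): one first fixes a Lipschitz map $f:\mathcal{X}\to[0,1]^M$ satisfying $d(x,y)\ge\epsilon\Rightarrow\|f(x)-f(y)\|=1$, then products it to $f_N:\mathcal{X}\to[0,1]^{MN}$. The lemma says that if
\[
4^{MN}(L+1)^{1+s+\|\varphi_N\|_\infty}\,H_1^{s}(\mathcal{X},d_N,\varphi_N)<1
\]
then $\mathrm{Widim}'_\epsilon(\mathcal{X},d_N,\varphi_N)\le s+1$. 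The proof of the lemma is a volume/projection argument in the cube $[0,1]^{MN}$ (choose a generic point and project to a suitable skeleton), not a nerve/cover construction ``cube by cube'' as you sketch. Without this Lipschitz-into-cube trick your description does not yield a map to a polyhedron with the required local-dimension control.

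Two smaller corrections on the roles of the hypotheses. The constant $K$ is used precisely to get the linear bound $\|\varphi_N\|_\infty\le N(K+\|\varphi_1\|_\infty)$, which is what allows the exponential factor $4^{MN}(L+1)^{1+s+\|\varphi_N\|_\infty}$ above to be absorbed into a geometric term $\{4^M(L+1)^{1+s+\tau+K+\|\varphi_1\|_\infty}\delta^\tau\}^{N}<1$ for suitable $\delta$. Bounded distortion is used \emph{after} this step, not inside it: Lemma \ref{lema} bounds $\mathrm{Widim}'_\epsilon$ (defined via the small local dimension $\dim'_a P$), and one needs Proposition \ref{proa}, i.e.\ $\mathrm{var}_\epsilon(\varphi_N,d_N)=o(N)$, to pass from $\mathrm{Widim}'_\epsilon$ back to $\mathrm{Widim}_\epsilon$ in the definition of $\mathrm{mdim}(\mathcal{X},T,\mathcal{F})$.
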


\begin{prop}({= Proposition\ref{pro}})
	Let $(\mathcal{X}, T)$ be a dynamical system with a metric ${ d}$ and  an invariant probability measure $\mu$. Let $\mathcal{F}=\left\lbrace \varphi_{n} \right\rbrace_{n=1}^{\infty} $  be a sub-additive potential such that $\mathcal{F}_{*}(\mu)\neq -\infty$. Then
	\begin{align*}
	&\overline{\rm{rdim}}(\mathcal{X},T, d, \mu)+\mathcal{F}_{*}(\mu)\leq \overline{\rm{mdim}}_{M}(\mathcal{X},T, d, \mathcal{F}),\\&
	\underline{\rm{rdim}}(\mathcal{X},T, d, \mu)+\mathcal{F}_{*}(\mu)\leq \underline{\rm{mdim}}_{M}(\mathcal{X},T, d, \mathcal{F}).
	\end{align*}	
\end{prop}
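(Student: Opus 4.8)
The plan is to reduce both inequalities to a single finite-time, finite-scale estimate obtained by using a Borel partition of $\mathcal{X}$ into small dynamical sets as an encoder, and then to let $n\to\infty$ and $\epsilon\to 0$. Throughout, write $d_n(x,y)=\max_{0\le i<n}d(T^ix,T^iy)$ and, for $0<\epsilon<1$, let $S_n(\epsilon)$ denote the weighted covering quantity at time $n$ and scale $\epsilon$ that appears in the definition of $\overline{\mathrm{mdim}}_M$ and $\underline{\mathrm{mdim}}_M$, i.e. the infimum of $\sum_U\sup_{x\in U}\epsilon^{-\varphi_n(x)}$ over finite covers of $\mathcal{X}$ by sets of $d_n$-diameter less than $\epsilon$; replacing a cover by the partition it generates only shrinks each set and hence does not increase the sum, so the infimum may equally be taken over such Borel partitions.

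The key estimate is the following. Fix $0<\epsilon<1$ and $n\in\mathbb{N}$ and let $\mathcal{P}=\{P_1,\dots,P_N\}$ be a finite Borel partition of $\mathcal{X}$ into sets of $d_n$-diameter $<\epsilon$. Choosing $c_j\in P_j$ and letting $(X,Y)$ be the coupling with $X\sim\mu$ and $Y=c_j$ on $\{X\in P_j\}$, we have $d_n(X,Y)<\epsilon$ almost surely, so this coupling is admissible for the time-$n$ rate distortion function at distortion level $\epsilon$, whence $R_n(\mu,d,\epsilon)\le I(X;Y)\le H(Y)=\sum_j q_j\log\frac1{q_j}$ with $q_j=\mu(P_j)$. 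Moreover $\int\varphi_n\,d\mu=\sum_j\int_{P_j}\varphi_n\,d\mu\le\sum_j q_j\sup_{P_j}\varphi_n$ since each $\varphi_n$ is continuous on the compact space $\mathcal{X}$. Adding $\log\frac1\epsilon\int\varphi_n\,d\mu$ to the bound for $R_n$, using $\log\frac1\epsilon>0$, and applying Jensen's inequality for the concave function $\log$, we get
\begin{align*}
R_n(\mu,d,\epsilon)+\log\tfrac1\epsilon\int\varphi_n\,d\mu
&\le\sum_j q_j\log\frac1{q_j}+\log\tfrac1\epsilon\sum_j q_j\sup_{P_j}\varphi_n
=\sum_j q_j\log\frac{\epsilon^{-\sup_{P_j}\varphi_n}}{q_j}\\
&\le\log\Big(\sum_j\epsilon^{-\sup_{P_j}\varphi_n}\Big)
=\log\Big(\sum_j\sup_{x\in P_j}\epsilon^{-\varphi_n(x)}\Big).
\end{align*}
Taking the infimum over all admissible partitions $\mathcal{P}$ on the right (the left-hand side does not depend on $\mathcal{P}$) gives $R_n(\mu,d,\epsilon)+\log\frac1\epsilon\int\varphi_n\,d\mu\le\log S_n(\epsilon)$.

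To finish, divide by $n$ and let $n\to\infty$: since $\frac1n\int\varphi_n\,d\mu\to\mathcal{F}_*(\mu)$ by subadditivity of $n\mapsto\int\varphi_n\,d\mu$, and this limit is finite by hypothesis, we obtain $R(\mu,d,\epsilon)+\log\frac1\epsilon\cdot\mathcal{F}_*(\mu)\le\limsup_{n\to\infty}\frac1n\log S_n(\epsilon)$. Dividing by $\log\frac1\epsilon>0$ yields
$$\frac{R(\mu,d,\epsilon)}{\log(1/\epsilon)}+\mathcal{F}_*(\mu)\le\frac{1}{\log(1/\epsilon)}\limsup_{n\to\infty}\frac1n\log S_n(\epsilon)$$
for every $\epsilon\in(0,1)$. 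Because $\mathcal{F}_*(\mu)$ is a finite constant it passes through both $\limsup_{\epsilon\to0}$ and $\liminf_{\epsilon\to0}$: taking $\limsup_{\epsilon\to0}$ of both sides gives $\overline{\mathrm{rdim}}(\mathcal{X},T,d,\mu)+\mathcal{F}_*(\mu)\le\overline{\mathrm{mdim}}_M(\mathcal{X},T,d,\mathcal{F})$, and taking $\liminf_{\epsilon\to0}$ gives the corresponding inequality with $\underline{\mathrm{rdim}}$ and $\underline{\mathrm{mdim}}_M$.

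The substantive content is the single Jensen step above; the points requiring care are, first, matching the partition-induced coupling to the exact definition of $R_n(\mu,d,\epsilon)$ adopted in Section~\ref{mutal} — whether distortion is measured in expectation or almost surely and whether $d_n$ is the maximum or the averaged metric, an almost-sure bound by $\epsilon$ covering every standard variant, at worst after replacing $\epsilon$ by $2\epsilon$ (harmless in the limits) — and, second, the interchange of the $n\to\infty$ and $\epsilon\to0$ limits together with the extraction of the constant $\mathcal{F}_*(\mu)$ from $\limsup_{\epsilon\to0}$ and $\liminf_{\epsilon\to0}$, which is precisely why the hypothesis $\mathcal{F}_*(\mu)\neq-\infty$ is needed. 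I expect the second of these to be the only real subtlety; note in particular that neither bounded distortion nor any bound on $\varphi_{n+1}-\varphi_n$ is used here, because $S_n(\epsilon)$ already incorporates the supremum of the potential over each small set — those hypotheses are needed only for Theorem~\ref{bound}.
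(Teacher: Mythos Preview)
Your proof is correct and follows essentially the same route as the paper's: both construct the encoder $Y$ from representatives of a cover/partition into $d_N$-small pieces, bound $I(X;Y)$ by the partition entropy, and then apply the log-sum (Jensen) inequality---the paper isolates this as Lemma~\ref{wp}, you invoke Jensen directly---before passing to the limits $N\to\infty$ and $\epsilon\to0$. The only cosmetic differences are that the paper works with an open cover and disjointifies it to $U_i\setminus(U_1\cup\cdots\cup U_{i-1})$ while you start from a Borel partition, and the paper writes $Y$ as the orbit segment $(x_i,Tx_i,\ldots,T^{N-1}x_i)$ to match the definition of $R_\mu(\epsilon)$ in Section~\ref{mutal}; your remark in the final paragraph that the $d_n$-bound covers any standard variant handles this.
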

{\bf Step 2:} show that the following results by constructing the measure through a version of dynamical Frostman's lemma.
\begin{theorem}(= Theorem\ref{thmc})
	Assume that $\overline{{\rm mdim_{M}}}(\mathcal{X}, T, d)<\infty$ for all $d\in \mathcal{D}(X)$ and there exists $K>0$ such that $|\varphi_{n+1}-\varphi_{n}|<K$ for every $n$.	Under a mild condition on $d$ (called tame growth  of covering numbers)
	$$ {\rm mdim}_{H}(\mathcal{X}, T, d, \mathcal{F}) \leq \sup\limits_{\mu \in  \mathcal{M}(\mathcal{X}, T)}(\underline{{\rm rdim}}(\mathcal{X}, T, d, \mu)+ \mathcal{F}_{*}(\mu)).$$
	
\end{theorem}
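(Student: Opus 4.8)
The plan is to adapt Tsukamoto's Step~2 to the sub-additive setting, producing the witnessing invariant measure through a dynamical Frostman lemma. It suffices to show that for every real $a<{\rm mdim}_{H}(\mathcal{X},T,d,\mathcal{F})$ there exists $\mu\in\mathcal{M}(\mathcal{X},T)$ with $\underline{{\rm rdim}}(\mathcal{X},T,d,\mu)+\mathcal{F}_{*}(\mu)\ge a$. Unwinding the definition of ${\rm mdim}_{H}(\mathcal{X},T,d,\mathcal{F})$, one first fixes $\epsilon_{0}>0$ such that for every $0<\epsilon<\epsilon_{0}$ there are infinitely many $n$ for which the $\varphi_{n}$-weighted Hausdorff content of $\mathcal{X}$ at scale $\epsilon$ in the metric $d_{n}(x,y)=\max_{0\le i\le n-1}d(T^{i}x,T^{i}y)$ detects exponent at least $an$. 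For such a pair $(n,\epsilon)$ one applies a dynamical version of Frostman's lemma---the weight now being $\exp\varphi_{n}$ rather than constant---to obtain a Borel probability measure $\nu=\nu_{n,\epsilon}$ on $\mathcal{X}$ that is ``spread out at level $an$'', i.e.\ $\nu\bigl(B_{d_{n}}(x,r)\bigr)$ is bounded above by $C\,r^{\,an-\sup_{B_{d_{n}}(x,r)}\varphi_{n}}$ (up to normalization) for all $x$ and $0<r<\epsilon$, and whose $\varphi_{n}$-average $\frac1n\int\varphi_{n}\,d\nu$ is essentially extremal for this content.

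Next form $\mu_{n}=\frac1n\sum_{i=0}^{n-1}T^{i}_{*}\nu_{n,\epsilon}$ and let $\mu_{\epsilon}$ be a weak-$*$ subsequential limit of the $\mu_{n}$ along the relevant $n$; then $\mu_{\epsilon}\in\mathcal{M}(\mathcal{X},T)$. Here the hypothesis $|\varphi_{n+1}-\varphi_{n}|\le K$ is used to read off the Lyapunov exponent: for each fixed $m$, sub-additivity gives $\varphi_{m}(T^{i}x)\ge\varphi_{i+m}(x)-\varphi_{i}(x)$, and telescoping this over $0\le i\le n-1$ together with the uniform-gap bound yields $\frac1n\sum_{i=0}^{n-1}\varphi_{m}\circ T^{i}\ge\frac mn\varphi_{n}-C_{m}/n$ with $C_{m}$ independent of $x$. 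Integrating against $\nu_{n,\epsilon}$ and letting $n\to\infty$ gives $\frac1m\int\varphi_{m}\,d\mu_{\epsilon}\ge\limsup_{n}\frac1n\int\varphi_{n}\,d\nu_{n,\epsilon}$ for every $m$, whence $\mathcal{F}_{*}(\mu_{\epsilon})=\inf_{m}\frac1m\int\varphi_{m}\,d\mu_{\epsilon}\ge\limsup_{n}\frac1n\int\varphi_{n}\,d\nu_{n,\epsilon}$.

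The core of the argument is the lower bound on the rate distortion of $\mu_{\epsilon}$. For a fixed distortion level $\delta$ one estimates the rate-distortion function $R(\mu_{\epsilon},d,\delta)$ from below: any $\delta$-distortion code for the stationary process associated with $\mu_{\epsilon}$ must, by the Frostman estimate on $\nu_{n,\epsilon}$-measures of small $d_{n}$-balls, carry roughly $\bigl(an-\sup\varphi_{n}\bigr)\log(1/\delta)$ bits per $n$-block, hence about $\bigl(a-\frac1n\sup\varphi_{n}\bigr)\log(1/\delta)$ bits per symbol. Dividing by $\log(1/\delta)$, letting $\delta\to0$ and then $\epsilon\to0$, combining with the Lyapunov-exponent estimate of the previous paragraph and the standing assumption $\overline{{\rm mdim_{M}}}(\mathcal{X},T,d)<\infty$, and finally extracting a weak-$*$ limit $\mu$ of a suitable sequence $\mu_{\epsilon}$ (using lower semicontinuity of $\mu\mapsto\underline{{\rm rdim}}(\mathcal{X},T,d,\mu)+\mathcal{F}_{*}(\mu)$), one arrives at $\underline{{\rm rdim}}(\mathcal{X},T,d,\mu)+\mathcal{F}_{*}(\mu)\ge a$, as required.

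I expect the main obstacle to lie in two interlocking places, both caused by the presence of a genuinely sub-additive potential. First, the weighted dynamical Frostman lemma: since the weight $\exp\varphi_{n}$ is no longer constant, the classical Vitali-type covering (or Hahn--Banach duality) argument behind Frostman's lemma must be reworked so that the resulting measure simultaneously controls its local dimension and its $\varphi_{n}$-average, with the error terms tracked along the orbit. Second, and heavier, is the passage from the single-scale, time-$n$ Frostman estimate for $\nu_{n,\epsilon}$ to a lower bound for the rate-distortion function of the invariant measure $\mu_{\epsilon}$---itself a limit $\lim_{N}\frac1N$ over independent block lengths---which forces one to absorb discrepancies between the scales $\epsilon$ and $\delta$ and between the block lengths $n$ and $N$; this is precisely where the mild condition on $d$ (tame growth of covering numbers) is needed, so that $\log$-covering-number corrections are negligible against the main term. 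Reconciling $\mathcal{F}_{*}(\mu)$---a sub-additive limit rather than a Birkhoff average---with the finite-time quantities $\frac1n\int\varphi_{n}\,d\nu_{n,\epsilon}$ is exactly what the uniform-gap hypothesis $|\varphi_{n+1}-\varphi_{n}|\le K$ provides.
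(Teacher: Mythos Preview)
Your outline misses the key structural idea of the paper's proof and misplaces the role of the tame-growth hypothesis, so as written it has a genuine gap.

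The paper does \emph{not} attempt a weighted Frostman lemma with exponent $an-\sup_B\varphi_n$. Instead it decouples the potential from the dimension by a pigeonhole (level-set) argument. First, the tame-growth condition is used \emph{only} to pass from the $d_N$--based mean Hausdorff dimension to the $L^{1}$--mean version ${\rm mdim}_{H,L^{1}}$ (Lemma~\ref{le2}); this is essential because rate distortion is defined through the mean metric $\overline d_N$, not $d_N$. Second, for $s<{\rm mdim}_{H,L^1}$ one finds $\delta$ and infinitely many $n_k$ with ${\rm dim}_H(\mathcal X,\overline d_{n_k},\varphi_{n_k},\delta)>(s+2\eta)n_k$, and then, using $|\varphi_{n+1}-\varphi_n|\le K$ only to ensure that $\varphi_{n_k}/n_k$ ranges over a bounded interval, one partitions $\mathcal X$ into finitely many level sets $(\varphi_{n_k}/n_k)^{-1}[t,t+\eta]$ and shows that one of them satisfies the \emph{unweighted} bound ${\rm dim}_H\bigl((\varphi_{n_k}/n_k)^{-1}[t,t+\eta],\overline d_{n_k},\delta\bigr)\ge (s-t)n_k$. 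The ordinary Frostman lemma (Lemma~\ref{lem3}) on this level set then yields $\nu_k$ with the clean, constant-exponent bound $\nu_k(E)\le({\rm diam}(E,\overline d_{n_k}))^{c(s-t)n_k}$.

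This is precisely what makes the rate-distortion step go through: the Kawabata--Dembo type Lemma~\ref{ml2} requires a Frostman estimate with a \emph{fixed} exponent $s$, and the Misiurewicz-style coupling argument from \cite{LT18,LT} then gives $\underline{\rm rdim}(\mathcal X,T,d,\mu)\ge c(s-t)$ for the weak-$*$ limit $\mu$ of $\mu_k=\tfrac{1}{n_k}\sum T_*^n\nu_k$. Meanwhile, since $\nu_k$ lives on the level set, $\int\varphi_{n_k}/n_k\,d\nu_k\ge t$ and the Cao--Feng--Huang lemma (Lemma~\ref{lem2}) gives $\mathcal F_*(\mu)\ge t$ immediately; your telescoping computation is not needed. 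Adding, $\underline{\rm rdim}+\mathcal F_*(\mu)\ge cs-(1-c)(\|\varphi_1\|_\infty+K)$, and one lets $c\to1$, $s\uparrow{\rm mdim}_{H,L^1}$.

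Your variable-exponent Frostman bound $\nu(B)\lesssim r^{\,an-\sup_B\varphi_n}$ entangles dimension and potential in a way that does not feed into Lemma~\ref{ml2}, and your sketch of the rate-distortion lower bound (``$(an-\sup\varphi_n)\log(1/\delta)$ bits per block'') does not explain how to separate out $\underline{\rm rdim}(\mu)$ from $\mathcal F_*(\mu)$. Likewise, no limit in $\epsilon$ or appeal to lower semicontinuity of $\mu\mapsto\underline{\rm rdim}+\mathcal F_*(\mu)$ is needed (or available): for each $c,s$ one produces a single $\mu$ and takes the supremum at the end. The missing idea is the level-set decomposition that reduces to an unweighted Frostman estimate.
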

\begin{corollary}
	\begin{align*}
	{\rm{mdim}}(\mathcal{X},T,  \mathcal{F})&\leq \sup\limits_{\mu \in  \mathcal{M}(\mathcal{X}, T)}(\underline{{\rm rdim}}(\mathcal{X}, T, d, \mu)+ \mathcal{F}_{*}(\mu)) \\&\leq \sup\limits_{\mu \in  \mathcal{M}(\mathcal{X}, T)}(\overline{{\rm rdim}}(\mathcal{X}, T, d, \mu)+ \mathcal{F}_{*}(\mu))\leq\overline{\rm{mdim}}_{M}(\mathcal{X},T, d, \mathcal{F})
	\end{align*}
	
\end{corollary}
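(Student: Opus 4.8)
\section*{Proof proposal}

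The plan is to read off the corollary as a concatenation of the three estimates already recorded above, so that no genuinely new argument is needed beyond keeping track of which hypotheses are in force and of the convention $c+(-\infty)=-\infty$ for $c\in\mathbb{R}$. For the leftmost inequality I would first apply the second half of Theorem~\ref{bound}: under the bounded distortion hypothesis on $\mathcal{F}$ together with the uniform bound $|\varphi_{n+1}-\varphi_{n}|<K$, it gives ${\rm mdim}(\mathcal{X},T,\mathcal{F})\le {\rm mdim}_{H}(\mathcal{X},T,d,\mathcal{F})$ for the metric $d$ under consideration. Then Theorem~\ref{thmc}, which is valid under the standing finiteness assumption $\overline{{\rm mdim}_{M}}(\mathcal{X},T,d)<\infty$, the same constant $K$, and the tame-growth-of-covering-numbers condition on $d$, yields ${\rm mdim}_{H}(\mathcal{X},T,d,\mathcal{F})\le \sup_{\mu\in\mathcal{M}(\mathcal{X},T)}\bigl(\underline{{\rm rdim}}(\mathcal{X},T,d,\mu)+\mathcal{F}_{*}(\mu)\bigr)$. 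Chaining these two inequalities proves the first line.

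The middle inequality is immediate from the definitions: for every $\mu\in\mathcal{M}(\mathcal{X},T)$ one has $\underline{{\rm rdim}}(\mathcal{X},T,d,\mu)\le\overline{{\rm rdim}}(\mathcal{X},T,d,\mu)$, since the lower rate distortion dimension is built from a $\liminf$ of exactly the quantities whose $\limsup$ defines the upper one; adding the common term $\mathcal{F}_{*}(\mu)$ and passing to the supremum over $\mu$ preserves the inequality. For the rightmost inequality I would fix $\mu\in\mathcal{M}(\mathcal{X},T)$ and split into two cases. If $\mathcal{F}_{*}(\mu)\ne-\infty$, Proposition~\ref{pro} applies directly and gives $\overline{{\rm rdim}}(\mathcal{X},T,d,\mu)+\mathcal{F}_{*}(\mu)\le\overline{{\rm mdim}}_{M}(\mathcal{X},T,d,\mathcal{F})$. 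If $\mathcal{F}_{*}(\mu)=-\infty$, then since the rate distortion dimension is a finite nonnegative number the left-hand side equals $-\infty$ and the inequality holds trivially. Taking the supremum over all $\mu\in\mathcal{M}(\mathcal{X},T)$ then finishes the proof.

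The only point that requires any attention — and it is the nearest thing to an obstacle here — is making sure that the hypotheses needed for the first step are mutually compatible and coincide with those under which the corollary is being stated, namely bounded distortion of $\mathcal{F}$, the uniform constant $K$ with $|\varphi_{n+1}-\varphi_{n}|<K$, the finiteness $\overline{{\rm mdim}_{M}}(\mathcal{X},T,d)<\infty$, and tame growth of covering numbers for the metric $d$ appearing on the right-hand side. Once these are assumed, every step above is purely formal, so I would present the corollary with a short two-or-three-line proof that simply cites Theorem~\ref{bound}, Theorem~\ref{thmc}, and Proposition~\ref{pro} in this order, flagging the $\mathcal{F}_{*}(\mu)=-\infty$ case explicitly.
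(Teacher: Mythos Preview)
Your proposal is correct and matches the paper's intended argument: the corollary is stated without proof precisely because it is the concatenation of Theorem~\ref{bound}, Theorem~\ref{thmc}, and Proposition~\ref{pro} in exactly the order you describe. Your explicit handling of the case $\mathcal{F}_{*}(\mu)=-\infty$ and your careful listing of the required hypotheses (bounded distortion, the uniform constant $K$, finiteness of $\overline{{\rm mdim}_{M}}$, tame growth of covering numbers) are appropriate additions, since the paper leaves these implicit.
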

{\bf Step 3}: construct a metric so that the metric mean dimension  is equal to the mean dimension.
\begin{theorem}{($\subset$ Theorem \ref {le})}
	Let $(\mathcal{X}, T)$ be a dynamical system with a sub-additive potential $\mathcal{F}=\left\lbrace  \varphi_{n}\right\rbrace_{n=1}^{\infty}$.  Suppose $(\mathcal{X}, T)$  has the marker property and there exists $K>0$ such that $|\varphi_{n+1}-\varphi_{n}|<K$ for every $n$. Then there exists $d\in \mathcal{D}(\mathcal{X})$ such that
	$${                                                                                \overline{{\rm mdim_{M}}}}(\mathcal{X}, T, d, \mathcal{F} )={\rm mdim}(\mathcal{X}, T, \mathcal{F}).$$
\end{theorem}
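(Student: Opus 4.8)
By Step~1 --- that is, Theorem~\ref{bound} together with the unconditional inequalities ${\rm mdim}_{H}(\mathcal{X},T,d,\mathcal{F})\le\underline{{\rm mdim}}_{M}(\mathcal{X},T,d,\mathcal{F})\le\overline{{\rm mdim}}_{M}(\mathcal{X},T,d,\mathcal{F})$ --- one already has ${\rm mdim}(\mathcal{X},T,\mathcal{F})\le\overline{{\rm mdim}}_{M}(\mathcal{X},T,d,\mathcal{F})$ for any metric with respect to which $\mathcal{F}$ has bounded distortion. So the whole content of the theorem is to produce \emph{one} metric $d\in\mathcal{D}(\mathcal{X})$, with $\mathcal{F}$ of bounded distortion for it, realising the reverse estimate $\overline{{\rm mdim}}_{M}(\mathcal{X},T,d,\mathcal{F})\le a$, where $a:={\rm mdim}(\mathcal{X},T,\mathcal{F})$ (the case $a=\infty$ being trivial). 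The plan is to run the sub-additive version of the dynamical Pontrjagin--Schnirelmann construction. Unwinding the definition of $a$: fixing an auxiliary metric $d_{0}$ (chosen so that $\mathcal{F}$ has bounded distortion for it), for each $m$ there are numbers $\epsilon_{m}\downarrow 0$ and $L_{m}\uparrow\infty$ such that for \emph{every} window length $\ell\in[L_{m},2L_{m})$ there is a continuous $\epsilon_{m}$-embedding $P_{m}^{(\ell)}\colon\mathcal{X}\to K_{m}^{(\ell)}$ into a finite polyhedron, with respect to the Bowen metric $(d_{0})_{\ell}$, whose dimension weighted by $\varphi_{\ell}$ (in the sense defining ${\rm mdim}(\mathcal{X},T,\mathcal{F})$) is at most $(a+\delta_{m})\ell$, with $\delta_{m}\downarrow 0$. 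Moreover, by the usual refinement procedure in mean dimension theory these data can be taken \emph{nested}: the level-$(m-1)$ polyhedron is a coarsening of the level-$m$ one, and the corresponding pulled-back pseudometrics telescope. This nesting is what will make the final metric efficient at \emph{every} scale rather than merely along a subsequence.

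Next I feed in the dynamics. For each $m$, the marker property with $N=L_{m}$ yields an $L_{m}$-marker, which is refined (as in the work of Lindenstrauss and Gutman) into a continuous, $T$-equivariant tiling of every orbit by intervals of length in $[L_{m},2L_{m})$; this produces a continuous map $\Phi_{m}\colon\mathcal{X}\to Y_{m}$ into a compact metrizable space recording the tiling near the origin together with the $P_{m}^{(\ell_{i})}$-image of the word read along each tile. Pulling a normalised metric back from $Y_{m}$ gives a pseudometric $\rho_{m}$ on $\mathcal{X}$; by the nesting the $\rho_{m}$ form a refining sequence, and I take $d$ to be their limit. Since $\epsilon_{m}\to 0$, the maps $\Phi_{m}$ jointly separate points, so $d$ is a metric; being a uniform limit of continuous pseudometrics it induces the original topology, hence $d\in\mathcal{D}(\mathcal{X})$.

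The heart of the proof is the estimate of $\overline{{\rm mdim}}_{M}(\mathcal{X},T,d,\mathcal{F})$. Fix $\epsilon$ of size comparable to $\epsilon_{m}$. By the telescoping, covering the Bowen space $(\mathcal{X},d_{n})$ at scale $\epsilon$ reduces to covering, at scale $\epsilon_{m}$, the $\lfloor n/L_{m}\rfloor$-fold product over tiles of copies of the $K_{m}^{(\ell)}$, together with the choice of tiling of $[0,n)$. The number of admissible tilings is $e^{o(n)}$ because $L_{m}\to\infty$; sub-additivity gives $\varphi_{n}(x)\le\sum_{i}\varphi_{\ell_{i}}(T^{s_{i}}x)$ up to one boundary error per tile, and $|\varphi_{n+1}-\varphi_{n}|<K$ bounds each such error by $O(K)$, hence the total by $O(Kn/L_{m})=o(n)$; and on each tile the $\varphi_{\ell}$-weighted covering number of $K_{m}^{(\ell)}$ at scale $\epsilon_{m}$ is controlled by the weighted-dimension bound $(a+\delta_{m})\ell$. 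Multiplying over the $\approx n/L_{m}$ tiles gives
\[
\tfrac1n\log\bigl(\varphi_{n}\text{-weighted }\epsilon\text{-covering number of }(\mathcal{X},d_{n})\bigr)\le(a+\delta_{m})\log(1/\epsilon_{m})+o(1),
\]
so dividing by $\log(1/\epsilon)\approx\log(1/\epsilon_{m})$ and letting $\epsilon\to 0$ yields $\overline{{\rm mdim}}_{M}(\mathcal{X},T,d,\mathcal{F})\le\limsup_{m}(a+\delta_{m})=a$. Comparing $\rho_{m}(x,y)<\epsilon_{m}$ with $(d_{0})_{\ell}$-closeness along each tile, and using the bounded distortion of $\mathcal{F}$ for $d_{0}$ together with the $K$-bound across tiles, shows that $\mathcal{F}$ retains bounded distortion for $d$; hence Step~1 supplies the matching lower bound and equality follows.

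The main obstacle is the interaction of the \emph{sub-additive} potential with the block decomposition. In Tsukamoto's additive setting $S_{n}\varphi$ splits exactly along the tiles, whereas here only $\varphi_{n+m}\le\varphi_{n}+\varphi_{m}\circ T^{n}$ is available, so the one-sided slack --- together with the mismatch between the tile lengths $\ell\in[L_{m},2L_{m})$ and a fixed reference length --- has to be absorbed into $o(n)$ error terms; this is exactly where the uniform increment bound $|\varphi_{n+1}-\varphi_{n}|<K$ is used (and, for routing the lower bound through Step~1, the bounded-distortion hypothesis). A secondary, purely technical difficulty is the bookkeeping in the nested, telescoping construction of the $P_{m}^{(\ell)}$, arranged so that a cover at scale $\epsilon_{m}$ costs only the level-$m$ complexity and not the accumulated complexity of all earlier levels, and the checking that the resulting $d$ is a genuine metric generating the topology of $\mathcal{X}$.
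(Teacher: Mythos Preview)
Your overall strategy---a dynamical Pontrjagin--Schnirelmann construction, using the marker property to tile orbits and then bounding the weighted covering numbers block by block---is indeed the paper's strategy. But the implementation you sketch diverges from what the paper actually does, and the steps you pass over in a sentence are precisely the ones that carry the weight.

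The paper does \emph{not} build the metric as a limit of pseudometrics $\rho_m$ pulled back from abstract ``tiling spaces'' $Y_m$. It constructs an explicit topological embedding $f'\colon\mathcal{X}\to V$ into an infinite-dimensional Banach space as a uniform limit of maps $f_n$, and takes $d(x,y)=\|f'(x)-f'(y)\|$. Each $f_n$ is assembled from linear maps $g_{n,\lambda}\colon P_n\to V$ and $g'_n\colon Q_n\to V$, glued along a \emph{continuous} dynamical Voronoi tiling (Lemma~\ref{tiling}), with the transitions between tiles handled by the join $P_n*Q_n$. What you call ``nesting/telescoping'' is realised concretely by simplicial maps $h_\lambda\colon P_{n+1}\to\overline{P_n*Q_n}$ produced via the simplicial-approximation Lemma~\ref{lem5.3}; these are what force $(\tilde g_{n+1,\lambda}(\pi_{n+1}(x)))_{\lambda\in J}=g_n(h_b(\pi_{n+1}(x)))$ on ``good'' intervals and hence make Condition~\ref{con}\,(2)--(3) and Claims~\ref{num1}--\ref{num2} go through at \emph{every} scale $\epsilon_{n+1}\le\epsilon\le\epsilon_n$. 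Your assertion that ``by the usual refinement procedure these data can be taken nested'' is exactly the hard part; in the sub-additive setting one has to track $\sup_{\pi_n^{-1}(O_{P_n}(\Delta))}\varphi_{N_n}$ through the simplicial maps, which is where the variation bounds (condition~(c)) and the estimate $\sup\varphi_{N_{n+1}}\le\varphi_{N_{n+1}}(x)+N_{n+1}/(n+1)$ enter.

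There is also a gap in your lower bound. You fix an auxiliary metric $d_0$ ``chosen so that $\mathcal{F}$ has bounded distortion for it'' and then transfer bounded distortion to the new metric $d$ so as to invoke Theorem~\ref{bound}. But the hypotheses of the present theorem do not supply such a $d_0$: bounded distortion is an additional assumption on $\mathcal{F}$, not a consequence of sub-additivity together with $|\varphi_{n+1}-\varphi_n|<K$. The paper's proof of Theorem~\ref{le} in fact establishes only the upper bound $\overline{{\rm mdim}_M}(\mathcal{X},T,(f')^*\|\cdot\|,\mathcal{F})\le{\rm mdim}(\mathcal{X},T,\mathcal{F})$ directly from the construction, without routing through bounded distortion; the matching lower bound is left to the general inequality, which in the broader context of Theorem~\ref{main1} is available because bounded distortion \emph{is} assumed there.
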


\section{Preliminaries}\label{mutal}
\subsection{ Mean dimension quantities for  sub-additive potentials}
In this subsection, we define the mean dimension quantities for sub-additive potentials.
First, we recall { local dimension }\cite{MT}.
Throughout the paper we assume that simplicial complexes are finite (namely, they have only finitely many simplexes).

Let $P$  be a simplicial complex. For $a\in P$ we define the  {\emph local dimension }  $ {\rm dim}_{a} P$ as the maximum of ${\rm dim} ~\Delta $ where $ \Delta \subset P$ is a simplex of $P$ containing $a$.
Let $(\mathcal{X}, d)$ be a compact metric space and $f: \mathcal{X} \rightarrow \mathcal{Y} $ a continuous map into some topological space $\mathcal{Y}$. For $\epsilon >0$ we call the map $f$ an $\epsilon$-embedding if ${\rm diam} f^{-1} y < \epsilon$ for all $y\in \mathcal{Y}$. Let $\varphi: \mathcal{X} \rightarrow \mathbb{R}$ be a continuous function.
We define the $\epsilon${\emph-width dimension with potential} by
\begin{align*}
\rm{Widim}_{\epsilon}(\mathcal{X}, d, \varphi)=\inf\{
\max\limits_{x\in \mathcal{X}}({\rm dim}_{f(x)} P + \varphi(x))| ~&P ~\text {is a simplicial complex and }\\& f: \mathcal{X} \rightarrow P ~\text{is an } \epsilon\text{-embedding}
\}.
\end{align*}
Let $T: \mathcal{X} \rightarrow \mathcal{X}$ be a homeomorphism.  For $N>0$ we define a metric $d_{N}$ by
$$ d_{N}(x,y)= \max\limits_{0\leq n < N } d(T^{n}x, T^{n}y)~~(x, y \in X).$$
We define the {\emph mean topological dimension for sub-additive potentials} by
\begin{align}\label{mdim}
{\rm mdim} (\mathcal{X}, T, \mathcal{F})= \lim\limits_{\epsilon \rightarrow 0} \left( \lim\limits_{N\rightarrow \infty} \dfrac{{\rm Widim}_{\epsilon}(\mathcal{X}, d_{N},  \varphi_{N})}{N}\right) .
\end{align}
The limits exist because the quantity ${\rm Widim}_{\epsilon}(\mathcal{X},d_{N},  \varphi_{N}) $ is subadditive in $N$ and monotone in $\epsilon$.
The value of ${\rm mdim} (\mathcal{X}, T, \varphi)$ is independent of the choice of $d$.  Namely it becomes a topological invariant of $(\mathcal{X},T)$. So we drop $d$ from the notation. When $\varphi=0$, the above (\ref{mdim}) specializes to the standard mean topological dimension: ${\rm mdim}(\mathcal{X}, T, 0)= {\rm mdim} (\mathcal{X}, T).$

The metric mean dimension for sub-additive potentials is defined as follows. Let $(\mathcal{X}, d)$  be a compact metric space with a continuous function $\varphi: \mathcal{X} \rightarrow \mathbb{R}$.
For $\epsilon> 0$, we set
\begin{align*}
\#(\mathcal{X}, d, \varphi, \epsilon)=\inf \{ \sum_{i=1}^{n} (1/\epsilon)^{\sup_{U_{i}}\varphi}\mid  ~ \mathcal{X}= U_{1} \cup \cdots \cup U_{n} ~&\text{ is an open cover with } \\&{\rm diam}~U_{i} < \epsilon ~\text{for all}  ~1\leq i \leq n \}.
\end{align*}
Given a  homeomorphism $T: \mathcal{X} \rightarrow \mathcal{X}$, we set
$$ P(\mathcal{X}, T, d, \mathcal{F}, \epsilon)= \lim\limits_{N\rightarrow \infty} \dfrac{\log \# (\mathcal{X}, d_{N}, \varphi_{n}, \epsilon)}{N}.$$
This limit exists because $\log \#(\mathcal{X} , d_{N}, \varphi_{n}, \epsilon)$ is subadditive in $N$.

We define the  { upper and lower metric mean dimension with sub-additive potentials} by
$$\overline{{\rm mdim_{M}}}(\mathcal{X}, T, d, \mathcal{F})= \limsup\limits_{\epsilon \rightarrow 0} \dfrac{P(\mathcal{X},T,d, \mathcal{F}, \epsilon)}{\log (1/ \epsilon)},$$
$$\underline{{\rm mdim_{M}}}(\mathcal{X}, T, d, \mathcal{F})= \liminf\limits_{\epsilon \rightarrow 0} \dfrac{P(\mathcal{X},T,d, \mathcal{F}, \epsilon)}{\log (1/ \epsilon)}.$$
When the upper and lower limits coincide, we denote the common value by
${\rm{mdim}}_{M}(\mathcal{X},T,d, \mathcal{F}).$

For $\epsilon>0$ and $s\geq \max\limits_{\mathcal{X}}\varphi$, we set
\begin{align*}
H_{\epsilon}^{s}(X,d,\varphi)=\inf\left\lbrace \sum_{i=1}^{\infty}({\rm diam} E_{i})^{s-\sup_{E_i}\varphi}| \mathcal{X}=\bigcup\limits_{ i=1}^{\infty}E_{i}~ \text{with} ~{\rm diam} E_{i}<\epsilon ~\text{for all} ~ i \geq 1 \right\rbrace
\end{align*}
Here we have used the convention that $0^{0}=1$ and $({\rm diam} \emptyset)^{s}=0$ for all $s\geq 0$. Note that this convention implies $H_{\epsilon}^{\max_{ \mathcal{X}}\varphi}(\mathcal{X}, d , \varphi)\geq 1$. We define ${\rm dim}_{H}(\mathcal{X}, { d},\varphi,\epsilon)$ as the supremum of $s\geq \max_{ \mathcal{X}}$ satisfying $H_{\epsilon}^{s}(\mathcal{X}, d, \varphi)\geq 1.$ Given  homeomorphism $T:\mathcal{X}\rightarrow \mathcal{X}$, we define the { mean Hausdorff dimension for sub-additive potentials} by
\begin{align*}
{\rm mdim_{H}}(\mathcal{X}, T, d, \mathcal{F})=\lim\limits_{ \epsilon \rightarrow 0}\left( \limsup\limits_{N\rightarrow \infty} \dfrac{{\rm dim}_{H}(\mathcal{X}, { d_{N}},\varphi_{n},\epsilon)}{N}\right) .
\end{align*}

We can also define the {  lower mean Hausdorff dimension for sub-additive potentials}
${\rm mdim_{H}}(\mathcal{X}, T, d, \mathcal{F})$  by  replacing $\limsup_{N}$ with $\liminf_{N}$ in this definition. But we do not need this concept in the paper.
\subsection{Mutual information}
In this subsection, we recall some basic properties of mutual information. We omit most of the proofs, which can be found in \cite{LT18}\cite{LT}. Throughout this subsection we fix a probability  space $(\Omega, \mathbb{P})$ and assume that all random variables are defined on it.
Let $\mathcal{X}$ and $\mathcal{Y}$ be measurable spaces, and let  $X$ and $Y$ be random variables taking values in $\mathcal{X}$  and $\mathcal{Y}$ respectively. We define their {\bf mutual information} $I(X,Y)$, which estimates the amount of information shared by $X$ and $Y$.

{\bf Case 1:} Suppose $\mathcal{X}$ and $\mathcal{Y}$ are finite sets.  Then we define
\begin{align*}
I(X; Y)= H(X)+ H(Y)-H(X,Y)=H(X)- H(X| Y).
\end{align*}
More explicitly
$$ I(X; Y)=\sum\limits_{x\in X, y\in Y} \mathbb{P}(X=x, Y=y)\log\dfrac{\mathbb{P}(X=x, Y=y)}{\mathbb{P}(X=x)\mathbb{P}(Y=y)}.$$
Here we use the convention that $0\log (0/a)=0$ for all $a\leq 0$.

{\bf Case 2:} In general,  take measurable maps $f: \mathcal{X} \rightarrow A$ and $g: \mathcal{Y} \rightarrow B$ into finite sets $A$ and $B$. Then we can consider $I(f\circ  X; g\circ Y)$ defined  by Case 1. We define $ I(X; Y)$  as the supremum of  $I(f\circ X; g\circ Y)$ over all finite-range measurable maps $f$ and $g$ defined on $\mathcal{X}$ and $\mathcal{Y}$. This definition is compatible with Case 1 when $\mathcal{X}$ and $\mathcal{Y}$ are finite sets.
\begin{lem}[Date-Processing inequality]\label{dp}
	Let $X$ and $Y$ be random variables taking values in measurable spaces $ \mathcal{X}$ and $\mathcal{Y}$ respectively. If $f:\mathcal{Y} \rightarrow \mathcal{Z}  $ is a measurable map then $I(X; f(Y)) \leq  I(X; Y)$. 	
\end{lem}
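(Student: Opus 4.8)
The plan is to argue straight from the definition of mutual information in the general (Case~2) setting, so that the statement reduces to an entirely formal monotonicity of a supremum taken over a smaller index family; in particular no appeal to the finite‑alphabet data‑processing inequality (with its grouping/concavity argument) is needed.

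First I would unwind the definitions. The composition $f\circ Y$ is a random variable taking values in $\mathcal{Z}$, so by Case~2,
\begin{align*}
I(X;f(Y))=\sup_{g,\,h} I\bigl(g\circ X;\,h\circ f(Y)\bigr),
\end{align*}
where the supremum runs over all measurable maps $g:\mathcal{X}\to A$ and $h:\mathcal{Z}\to B$ into finite sets $A,B$, and each term is the finite‑set mutual information of Case~1. Likewise
\begin{align*}
I(X;Y)=\sup_{g,\,k} I\bigl(g\circ X;\,k\circ Y\bigr),
\end{align*}
the supremum now over measurable maps $g:\mathcal{X}\to A$ and $k:\mathcal{Y}\to C$ into finite sets. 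The key observation is that for every admissible pair $(g,h)$ in the first supremum, the map $k:=h\circ f:\mathcal{Y}\to B$ is measurable, being a composition of measurable maps, and has range contained in the finite set $B$; hence $(g,k)$ is an admissible pair in the second supremum. Consequently
\begin{align*}
I\bigl(g\circ X;\,h\circ f(Y)\bigr)=I\bigl(g\circ X;\,(h\circ f)\circ Y\bigr)\leq I(X;Y)
\end{align*}
for every such $(g,h)$, and taking the supremum over $g$ and $h$ yields $I(X;f(Y))\leq I(X;Y)$.

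I do not expect any genuine obstacle: the whole content is that the family of finite‑range measurable maps on $\mathcal{Y}$ of the special form $h\circ f$ is contained in the family of all finite‑range measurable maps on $\mathcal{Y}$, so the defining supremum for $I(X;f(Y))$ ranges over a subcollection of the terms defining $I(X;Y)$. The only point that deserves a line of care is checking that $h\circ f$ is truly admissible (measurable, with range in a finite set), which is immediate. As an alternative for the finite‑set case one could instead invoke the chain‑rule identity $I(X;Y)=I(X;f(Y))+I\bigl(X;Y\mid f(Y)\bigr)$ together with nonnegativity of conditional mutual information, but the supremum argument is cleaner and avoids introducing conditional mutual information at all.
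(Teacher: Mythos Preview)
Your argument is correct. The paper does not actually prove this lemma: it is stated among the ``basic properties of mutual information'' whose proofs are explicitly omitted with a reference to \cite{LT18,LT}. Your approach, which reduces the inequality to the observation that the index set $\{h\circ f:h\text{ finite-range on }\mathcal{Z}\}$ is contained in the index set of all finite-range measurable maps on $\mathcal{Y}$, is a clean and self-contained way to handle the general Case~2 definition, and it correctly bypasses any need for the finite-alphabet chain rule or conditional mutual information.
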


\begin{rem}\label{key2}
	Lemma \ref{dp} implies that, in the definition  of  the rate distortion function $R_{\mu}(\epsilon)$, we can assume that the random variable $Y$ there takes only finitely many values, namely that its  distribution  is supported on a finite set. 
\end{rem}

\begin{lem}\label{le1}
	Let $\mathcal{X}$ and $\mathcal{Y}$  be finite sets and let $(X_{n}, Y_{n})$ be a sequence of random variables taking values in $\mathcal{X} \times \mathcal{Y}$. If $(X_{n}, Y_{n})$ converges to some $(X, Y)$ in law, then $I(X_{n}; Y_{n})$ converges to $I(X; Y)$.
\end{lem}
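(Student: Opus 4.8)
The plan is to reduce everything to the continuity of Shannon entropy on the probability simplex of a finite set. Since $\mathcal{X}$ and $\mathcal{Y}$ are finite, so is $\mathcal{X}\times\mathcal{Y}$, and convergence in law of $(X_{n},Y_{n})$ to $(X,Y)$ is by definition the statement that $\mathbb{P}(X_{n}=x,Y_{n}=y)\to\mathbb{P}(X=x,Y=y)$ for every $(x,y)\in\mathcal{X}\times\mathcal{Y}$. First I would record, from Case 1 of the definition of mutual information, the identity $I(X_{n};Y_{n})=H(X_{n})+H(Y_{n})-H(X_{n},Y_{n})$, where $H(\,\cdot\,)$ denotes the Shannon entropy of the indicated joint or marginal distribution, and likewise $I(X;Y)=H(X)+H(Y)-H(X,Y)$.

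Next I would invoke the standard fact that the entropy function $H(p_{1},\dots,p_{k})=-\sum_{i=1}^{k}p_{i}\log p_{i}$, with the convention $0\log 0=0$, is continuous on the simplex $\{(p_{1},\dots,p_{k}):p_{i}\geq 0,\ \sum_{i}p_{i}=1\}$. This is because the scalar function $t\mapsto -t\log t$ extends continuously to $[0,1]$ with value $0$ at $t=0$, and a finite sum of continuous functions is continuous; here the finiteness of $\mathcal{X}$ and $\mathcal{Y}$ is exactly what keeps these sums finite. The only point deserving a word of care is precisely this convention $0\log 0=0$ (equivalently the convention $0\log(0/a)=0$ already fixed in the paper), which is what makes $H$ continuous up to the boundary of the simplex rather than only on its interior.

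Then I would observe that each of the three distributions entering the identity converges to the corresponding limiting one: the joint law of $(X_{n},Y_{n})$ converges to that of $(X,Y)$ by hypothesis, and the marginal probabilities, e.g. $\mathbb{P}(X_{n}=x)=\sum_{y\in\mathcal{Y}}\mathbb{P}(X_{n}=x,Y_{n}=y)$, are continuous (indeed linear) functions of the joint law, hence converge to $\mathbb{P}(X=x)$; similarly for the law of $Y_{n}$. Applying the continuity of $H$ to each of the three convergent sequences of distributions gives $H(X_{n})\to H(X)$, $H(Y_{n})\to H(Y)$, and $H(X_{n},Y_{n})\to H(X,Y)$. Adding these three limits according to the identity above yields $I(X_{n};Y_{n})\to I(X;Y)$, which is the claim.

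I do not expect a genuine obstacle in this argument; it is essentially a continuity statement on a fixed finite-dimensional simplex. The single subtle ingredient is the boundary behaviour of $-t\log t$ at the origin, which the paper's conventions are designed to handle, and the only bookkeeping needed is to pass from the joint law to its two marginals before applying continuity of entropy.
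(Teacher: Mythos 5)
Your argument is correct: for finite alphabets, convergence in law is pointwise convergence of the joint probability mass functions, the marginals depend linearly on the joint law, and the identity $I=H(X)+H(Y)-H(X,Y)$ together with continuity of entropy on the simplex (using $0\log 0=0$) gives the claim. The paper itself omits the proof and refers to Lindenstrauss--Tsukamoto, where the argument is exactly this continuity-of-mutual-information-in-the-distribution reasoning, so your proposal matches the intended proof.
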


\begin{lem}[Subadditivity of mutual information]\label{lems} Let $X, Y, Z$ be random variables  taking values in finite sets $\mathcal{X}, \mathcal{Y}, \mathcal{Z}$ respectively. Suppose $X$ and $Y$ are conditionally independent given $Z$. Namely for every $z \in \mathcal{Z}$  with $\mathbb{P}(Z=z)\neq 0$
	$$ \mathbb{P}(X=x, Y=y| Z=z)= \mathbb{P}(X=x| Z=z) \mathbb{P}(Y=y| Z=z).$$
	Then $I(X, Y ; Z)\leq I(X; Z)+ I(Y; Z).$
\end{lem}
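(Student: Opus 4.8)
The plan is to reduce the statement to standard Shannon-entropy identities for random variables valued in finite sets, exploiting that the pair $(X,Y)$ is itself a random variable taking values in the finite set $\mathcal{X}\times\mathcal{Y}$, so that the explicit formula of Case 1 applies to it verbatim. Concretely, I would first expand the left-hand side as
$$I(X,Y;Z)=H(X,Y)+H(Z)-H(X,Y,Z)=H(X,Y)-H(X,Y\mid Z),$$
treating $(X,Y)$ as a single symbol; and symmetrically $I(X;Z)=H(X)-H(X\mid Z)$ and $I(Y;Z)=H(Y)-H(Y\mid Z)$.

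The second step uses the conditional independence hypothesis to split the conditional joint entropy. For every $z\in\mathcal{Z}$ with $\mathbb{P}(Z=z)\neq 0$, the factorization $\mathbb{P}(X=x,Y=y\mid Z=z)=\mathbb{P}(X=x\mid Z=z)\,\mathbb{P}(Y=y\mid Z=z)$ gives $H(X,Y\mid Z=z)=H(X\mid Z=z)+H(Y\mid Z=z)$; averaging against the law of $Z$ (the values $z$ with $\mathbb{P}(Z=z)=0$ contribute nothing, by the convention $0\log(0/a)=0$) yields $H(X,Y\mid Z)=H(X\mid Z)+H(Y\mid Z)$. The third step invokes the elementary subadditivity of entropy $H(X,Y)\le H(X)+H(Y)$, which is just $I(X;Y)\ge 0$ applied to the explicit formula in Case 1 (log-sum / Jensen). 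Combining the three:
$$I(X,Y;Z)=H(X,Y)-H(X,Y\mid Z)\le\bigl(H(X)+H(Y)\bigr)-\bigl(H(X\mid Z)+H(Y\mid Z)\bigr)=I(X;Z)+I(Y;Z).$$

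I do not expect a genuine obstacle here; the proof is routine and the only points needing a line of care are the finite-valued conventions when some $\mathbb{P}(Z=z)=0$ and the fact that the two auxiliary facts used ($I(X;Y)\ge 0$ and the identity $I(X,Y;Z)=H(X,Y)-H(X,Y\mid Z)$) are among the basic properties of mutual information already recorded in the references \cite{LT18,LT}. If one prefers to avoid entropy bookkeeping altogether, an equally short alternative is the chain rule $I(X,Y;Z)=I(Y;Z)+I(X;Z\mid Y)$ together with the identity $I(X;Z\mid Y)-I(X;Z)=I(X;Y\mid Z)-I(X;Y)=-I(X;Y)\le 0$, where $I(X;Y\mid Z)=0$ by conditional independence; this immediately gives $I(X,Y;Z)\le I(Y;Z)+I(X;Z)$.
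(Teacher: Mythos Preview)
Your argument is correct and entirely standard: the entropy expansion $I(X,Y;Z)=H(X,Y)-H(X,Y\mid Z)$, the splitting $H(X,Y\mid Z)=H(X\mid Z)+H(Y\mid Z)$ from conditional independence, and subadditivity $H(X,Y)\le H(X)+H(Y)$ combine exactly as you wrote. The paper does not supply its own proof of this lemma---it explicitly omits the proofs of the mutual-information lemmas and refers to \cite{LT18,LT}---so there is nothing to compare against beyond noting that your derivation is the expected one.
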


Let  $X$ and $Y$ be random variables taking values in finite sets $\mathcal{X}$ and $\mathcal{Y}$. We set $\mu(x)= \mathbb{P}(X=x)$ and $\nu(y | x)= \mathbb{P}(Y=y | X=x)$, where the latter is defined only for $x\in \mathcal{X}$ with $\mathbb{P}(X=x)\neq 0$. The mutual information $I(X ; Y)$  is determined  by the distribution of $(X, Y)$, namely $\mu(x)\nu(y| x)$. So we sometimes write $I(X; Y)= I(\mu, \nu).$

\begin{lem}\label{lemc}[Concavity / convexity of mutual information] In this notation, $I(\mu, \nu)$ is a concave function of $\mu(x)$ and a convex function of $\nu(y| x)$. Namely for $ 0\leq t \leq 1$
	$$I((1-t)\mu_{1}+ t \mu_{2}, \nu)\geq (1-t)I(\mu_{1}, \nu)+ t I(\mu_{2}, \nu),$$
	$$I(\mu, (1-t)\nu_{1}+ t \nu_{2}) \leq  (1-t)I(\mu, \nu_{1}) + t I(\mu, \nu_{2}).$$		
\end{lem}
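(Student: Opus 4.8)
The plan is to reduce the statement to two classical facts from finite information theory: the entropy decomposition $I(X;Y)=H(Y)-H(Y\mid X)$ and the Kullback--Leibler representation $I(\mu,\nu)=D\bigl(\mu(x)\nu(y\mid x)\,\big\|\,\mu(x)\rho_\nu(y)\bigr)$, where $\rho_\nu(y)=\sum_{x}\mu(x)\nu(y\mid x)$ is the marginal law of $Y$ induced by $\mu$ and $\nu$. These two formulas immediately separate the two halves of the lemma: concavity in $\mu$ will come from the first, convexity in $\nu$ from the second.

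For the concavity in $\mu$ with $\nu$ fixed, I would write
$I(\mu,\nu)=H(\rho_\mu)-\sum_{x}\mu(x)\,H\bigl(\nu(\cdot\mid x)\bigr)$, with $\rho_\mu(y)=\sum_x\mu(x)\nu(y\mid x)$. The subtracted term is affine in $\mu$, so it suffices to treat $H(\rho_\mu)$. Since $\rho_{(1-t)\mu_1+t\mu_2}=(1-t)\rho_{\mu_1}+t\rho_{\mu_2}$ and the Shannon entropy is a concave function on the probability simplex of $\mathcal{Y}$, one gets $H\bigl(\rho_{(1-t)\mu_1+t\mu_2}\bigr)\ge (1-t)H(\rho_{\mu_1})+tH(\rho_{\mu_2})$; subtracting the affine term preserves the inequality, which is exactly the desired concavity $I\bigl((1-t)\mu_1+t\mu_2,\nu\bigr)\ge (1-t)I(\mu_1,\nu)+tI(\mu_2,\nu)$.

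For the convexity in $\nu$ with $\mu$ fixed, I would use the $D(\cdot\,\|\,\cdot)$ representation. Put $\nu=(1-t)\nu_1+t\nu_2$. Then the joint law satisfies $\mu(x)\nu(y\mid x)=(1-t)\,\mu(x)\nu_1(y\mid x)+t\,\mu(x)\nu_2(y\mid x)$, and because the $X$-marginal stays $\mu$ and $\rho_\nu=(1-t)\rho_{\nu_1}+t\rho_{\nu_2}$, the product law satisfies $\mu(x)\rho_\nu(y)=(1-t)\,\mu(x)\rho_{\nu_1}(y)+t\,\mu(x)\rho_{\nu_2}(y)$. Applying the joint convexity of relative entropy, $D\bigl((1-t)p_1+tp_2\,\|\,(1-t)q_1+tq_2\bigr)\le (1-t)D(p_1\|q_1)+tD(p_2\|q_2)$, to $p_i=\mu(x)\nu_i(y\mid x)$ and $q_i=\mu(x)\rho_{\nu_i}(y)$ yields $I(\mu,\nu)\le (1-t)I(\mu,\nu_1)+tI(\mu,\nu_2)$.

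The only delicate point is bookkeeping: conditional distributions $\nu(\cdot\mid x)$ are defined only for $x$ with $\mu(x)\neq 0$, and one must respect the convention $0\log(0/a)=0$. Both issues are harmless, since the vanishing entries contribute nothing and $\mathcal{X},\mathcal{Y}$ are finite so every quantity is finite; the joint convexity of $D$ itself follows entrywise from the log-sum inequality, which absorbs all degenerate terms uniformly. I expect that log-sum / joint-convexity-of-$D$ step to be the one substantive ingredient, but it is entirely standard, and in fact the whole lemma is recorded in \cite{LT18,LT}.
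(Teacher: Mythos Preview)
Your argument is correct and is exactly the standard proof of this fact: concavity in $\mu$ via $I(\mu,\nu)=H(\rho_\mu)-\sum_x\mu(x)H(\nu(\cdot\mid x))$ together with concavity of $H$, and convexity in $\nu$ via the relative-entropy representation and joint convexity of $D(\cdot\,\|\,\cdot)$ (log-sum inequality). The paper itself does not prove this lemma at all --- it is stated as a recalled fact with the proofs explicitly omitted and referred to \cite{LT18,LT} --- so there is no alternative approach to compare against; your write-up supplies precisely the kind of proof those references contain.
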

\begin{lem}[Superadditivity  of mutual information]
	Let $X, Y, Z$ be measurable maps from $\Omega$ to $\mathcal{X}, \mathcal{Y}, \mathcal{Z}$ respectively. Suppose $X$ and $Z$ are independent. Then
	$$ I(Y; X,Z)\geq I(Y; X)+ I(Y; Z).$$	
\end{lem}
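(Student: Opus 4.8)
The plan is to reduce the inequality to the case of finitely‑valued random variables and there derive it from two elementary facts about Shannon entropy: additivity of entropy for independent variables, and subadditivity of conditional entropy.

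First I would unwind the definition in Case~2: $I(Y;X,Z)$ is the supremum of $I(f(Y);h(X,Z))$ over finite‑range measurable maps $f$ on $\mathcal{Y}$ and $h$ on $\mathcal{X}\times\mathcal{Z}$. Since we only want a lower bound on $I(Y;X,Z)$, it is legitimate to restrict $h$ to be of \emph{product form} $h(X,Z)=(g_{1}(X),g_{2}(Z))$ with $g_{1},g_{2}$ finite‑range measurable on $\mathcal{X}$ and $\mathcal{Z}$. Now fix $\delta>0$ and choose pairs $(f_{1},g_{1})$ and $(f_{2},g_{2})$ with $I(f_{1}(Y);g_{1}(X))>I(Y;X)-\delta$ and $I(f_{2}(Y);g_{2}(Z))>I(Y;Z)-\delta$. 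Replacing $f_{1},f_{2}$ by their common refinement $f:=(f_{1},f_{2})$ and invoking the data‑processing inequality (Lemma~\ref{dp}), since $f_{1}(Y)$ and $f_{2}(Y)$ are functions of $f(Y)$ we still have $I(f(Y);g_{1}(X))>I(Y;X)-\delta$ and $I(f(Y);g_{2}(Z))>I(Y;Z)-\delta$. Note also that $g_{1}(X)$ and $g_{2}(Z)$ are independent because $X$ and $Z$ are, and all three of $f(Y),g_{1}(X),g_{2}(Z)$ are finitely valued; crucially this reduction does not require $X$ or $Z$ themselves to take finitely many values.

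It then remains to treat the finite case: for finitely‑valued $U,V,W$ with $V$ and $W$ independent, show $I(U;V,W)\ge I(U;V)+I(U;W)$. Writing $I(U;V,W)=H(V,W)-H(V,W\mid U)$, independence gives $H(V,W)=H(V)+H(W)$, while $H(V,W\mid U)=H(V\mid U)+H(W\mid V,U)\le H(V\mid U)+H(W\mid U)$ since conditioning does not increase entropy. Subtracting yields $I(U;V,W)\ge(H(V)-H(V\mid U))+(H(W)-H(W\mid U))=I(U;V)+I(U;W)$. Applying this with $U=f(Y)$, $V=g_{1}(X)$, $W=g_{2}(Z)$ gives $I(Y;X,Z)\ge I(f(Y);g_{1}(X),g_{2}(Z))\ge I(f(Y);g_{1}(X))+I(f(Y);g_{2}(Z))>I(Y;X)+I(Y;Z)-2\delta$, and letting $\delta\to0$ finishes the argument. (Alternatively, one can argue purely with conditional mutual information via the chain rule $I(Y;X,Z)=I(Y;X)+I(Y;Z\mid X)$ together with the identity $I(Y;Z\mid X)-I(Y;Z)=I(X;Z\mid Y)-I(X;Z)=I(X;Z\mid Y)\ge0$, using $I(X;Z)=0$.)

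I expect the only real obstacle to be the reduction step: one must justify restricting to product‑form finite‑range maps and reconcile the three separate suprema defining $I(Y;X,Z)$, $I(Y;X)$, $I(Y;Z)$. The common‑refinement trick combined with the data‑processing inequality is exactly what resolves this, after which the finite‑dimensional inequality is immediate and robust.
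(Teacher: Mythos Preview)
Your argument is correct. The paper itself does not give a proof of this lemma: the subsection on mutual information explicitly says the proofs are omitted and refers to \cite{LT18} and \cite{LT}. So there is no in-paper proof to compare against; your write-up supplies what the paper leaves out.

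Both the reduction step and the finite-case computation are sound. Restricting the finite-range map on $\mathcal{X}\times\mathcal{Z}$ to product form $(g_1(X),g_2(Z))$ only lowers the supremum defining $I(Y;X,Z)$, which is the right direction; the common-refinement trick for $f$ together with data-processing (Lemma~\ref{dp}) lets you use a single discretization of $Y$ on both sides. In the finite case your entropy manipulation is the standard one: $H(V,W)=H(V)+H(W)$ by independence, and $H(V,W\mid U)=H(V\mid U)+H(W\mid V,U)\le H(V\mid U)+H(W\mid U)$. The alternative chain-rule route you mention, $I(Y;X,Z)=I(Y;X)+I(Y;Z\mid X)$ with $I(Y;Z\mid X)\ge I(Y;Z)$ following from $I(X;Z)=0$, is equally valid and is in fact how the result is often stated in the references the paper cites.
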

The following lemma is a key  to connect geometric measure theory  to rate distortion theory\cite{KD94}\cite{LT}.
\begin{lem}\label{ml2}
	Let $\epsilon$ and $\delta$ be positive numbers with  $2\epsilon \log (1/\epsilon)\leq \delta$. Let $0\leq  \tau \leq \min(\epsilon/3, \delta/ 2)$ and $s\geq 0$. Let $(\mathcal{X}, d)$ be a compact metric space with a Borel probability measure $\mu$ satisfying
	\begin{align}
	\mu(E)\leq (\tau + {\rm diam}E)^{s}, ~~~\forall E\subset \mathcal{X} ~\text{with}~{
		\rm diam}E<\delta.
	\end{align}
	Let $X$ and $Y$ be random variables taking values in $\mathcal{X}$ with {\rm Law}(X)=$\mu$ and $\mathbb{E}d(X,Y)<\epsilon.$ Then $$ I(X; Y)\geq s \log (1/\epsilon)- T(s+1).$$
	Here $T$ is a universal positive constant independent of $\epsilon$, $\delta, \tau, s, (\mathcal{X}, d), \mu$.
\end{lem}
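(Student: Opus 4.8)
The plan is to work with the disintegration $\mu_y:=\mathrm{law}(X\mid Y=y)$ of $X$ over $Y$ (it exists since $\mathcal{X}$ is compact metric, hence standard Borel), using the standard identity $I(X;Y)=\int D(\mu_y\,\|\,\mu)\,d\,\mathrm{law}(Y)(y)$; equivalently, one may first reduce to $Y$ finite-valued via Lemma \ref{dp}, as in Remark \ref{key2}. We may assume $s>0$ and $\epsilon\le e^{-2}$, since otherwise $s\log(1/\epsilon)-T(s+1)\le 0\le I(X;Y)$ for large $T$. For each $y$ introduce the finite dyadic annulus partition $\mathcal{P}_y=\{A_l^{(y)}\}_{l\ge 0}$ centred at $y$: $A_0^{(y)}=\{x:d(x,y)<\epsilon\}$ and $A_l^{(y)}=\{x:2^{l-1}\epsilon\le d(x,y)<2^l\epsilon\}$ for $l\ge 1$ (only finitely many cells are nonempty, by compactness). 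Since coarsening does not increase relative entropy,
\begin{align*}
I(X;Y)\ \ge\ \int\!\Bigl(\sum_l\mu_y(A_l^{(y)})\log\frac{1}{\mu(A_l^{(y)})}\Bigr)d\,\mathrm{law}(Y)(y)\ -\ \int H\bigl(\{\mu_y(A_l^{(y)})\}_l\bigr)d\,\mathrm{law}(Y)(y)\ =:\ P-Q ,
\end{align*}
$H$ being Shannon entropy. I will show $P\ge s\log(1/\epsilon)-C_1 s$ and $Q\le C_0$ with absolute constants $C_0,C_1$; this gives the lemma with $T:=\max(C_0,C_1)$ (enlarged for the trivial cases).

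For $Q$: since $t\mapsto-t\log t$ is concave, Jensen's inequality applied in $y$ (coordinatewise in $l$) yields $Q\le H(\{\pi_l\}_l)$, where $\pi_l:=\int\mu_y(A_l^{(y)})\,d\,\mathrm{law}(Y)(y)=\mathbb{P}(2^{l-1}\epsilon\le d(X,Y)<2^l\epsilon)$ for $l\ge1$, $\pi_0=\mathbb{P}(d(X,Y)<\epsilon)$, and $\sum_l\pi_l=1$. From $\mathbb{E}\,d(X,Y)<\epsilon$ we get $\sum_{l\ge1}2^{l-1}\pi_l\le\mathbb{E}\,d(X,Y)/\epsilon<1$, hence $\sum_l l\pi_l\le\sum_{l\ge1}2^l\pi_l<2$; the maximum-entropy bound for distributions on $\mathbb{Z}_{\ge 0}$ with first moment $<2$ then gives $Q\le H(\{\pi_l\})\le C_0$.

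For $P$: let $L'$ be the largest integer $\ge 0$ with $2^{L'+1}\epsilon\le\tfrac12\min(\delta,1)$; one checks $L'\ge0$ exactly because $\epsilon\le e^{-2}$ and $2\epsilon\log(1/\epsilon)\le\delta$. For $l\le L'$ one has $\mathrm{diam}\,A_l^{(y)}\le 2^{l+1}\epsilon<\delta$, so the hypothesis on $\mu$ and $\tau\le\epsilon/3$ give $\mu(A_l^{(y)})\le(\tau+2^{l+1}\epsilon)^s\le(2^{l+2}\epsilon)^s\le 1$, whence $\log\frac{1}{\mu(A_l^{(y)})}\ge s(\log(1/\epsilon)-(l+2)\log 2)\ge 0$; for $l>L'$ use only $\log\frac{1}{\mu(A_l^{(y)})}\ge0$. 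Summing, averaging in $y$, and using $\sum_l(l+2)\pi_l<4$ together with $\sum_{l\le L'}\pi_l=\mathbb{P}(d(X,Y)<2^{L'}\epsilon)\ge1-2^{-L'}$, we get $P\ge s\log(1/\epsilon)(1-2^{-L'})-4s\log2$. By maximality of $L'$, $2^{L'}>\min(\delta,1)/(8\epsilon)$, and here the hypothesis $2\epsilon\log(1/\epsilon)\le\delta$ enters decisively: it forces $2^{-L'}\log(1/\epsilon)<8\epsilon\log(1/\epsilon)/\min(\delta,1)\le 4$ (cases $\delta\ge1$ and $\delta<1$). Thus $P\ge s\log(1/\epsilon)-(4+4\log2)s$, and combining with the bound on $Q$ proves the lemma.

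The crux — and the point of the dyadic decomposition — is to avoid a spurious loss of order $s\log\log(1/\epsilon)$. The naive two-piece partition $\{\overline{B}(y,r),\mathcal{X}\setminus\overline{B}(y,r)\}$ requires $r\gg\epsilon$ so that $\mu_y$ charges $\overline{B}(y,r)$ with mass near $1$, but then the Frostman-type bound only gives $\log\tfrac{1}{\mu(\overline{B}(y,r))}\gtrsim s\log\tfrac1r$, and optimising over $r$ forces $r\sim\epsilon\log(1/\epsilon)$, a loss of $s\log\log(1/\epsilon)$. The dyadic refinement removes this: the inner scales keep $P$ within $O(s)$ of $s\log(1/\epsilon)$; the outer scales carry only a fraction $\le 2^{-L'}=O(1/\log(1/\epsilon))$ of the mass, by $\mathbb{E}\,d(X,Y)<\epsilon$; and the sole additional cost, the bookkeeping entropy $Q$, stays bounded via the same first-moment input. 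Verifying that $2\epsilon\log(1/\epsilon)\le\delta$ leaves room for the $\sim\log_2\log(1/\epsilon)$ dyadic scales needed, while keeping every logarithm nonnegative under $\tau\le\epsilon/3$, is the delicate bookkeeping and the main obstacle.
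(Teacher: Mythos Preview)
Your argument is correct. Note, however, that the paper does not supply its own proof of this lemma: it is quoted from \cite{KD94} and \cite{LT} without proof, so there is no in-paper argument to compare against.

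For context, your approach is essentially the one used in those references. The key moves --- the pointwise relative-entropy identity $I(X;Y)=\int D(\mu_y\Vert\mu)\,d\,\mathrm{law}(Y)(y)$, the dyadic annulus coarsening centred at $Y$, the Frostman bound $\mu(A_l^{(y)})\le (2^{l+2}\epsilon)^s$ on the small scales, and the control of the bookkeeping entropy via the first-moment constraint $\mathbb{E}\,d(X,Y)<\epsilon$ --- are precisely the ingredients of the Kawabata--Dembo argument as adapted in \cite{LT}. Your discussion of why the naive two-piece partition loses an $s\log\log(1/\epsilon)$ and how the dyadic refinement removes it is a clean explanation of the mechanism. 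The numerical constants you obtain ($T=\max(C_0,4+4\log 2)$, suitably enlarged to absorb the trivial cases $s=0$ or $\epsilon>e^{-2}$) are acceptable, since the lemma only asserts existence of a universal $T$.
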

\subsection{Rate distortion function}
In this subsection, we briefly review rate distortion theory here. Its primary object is data compression  of continuous random variables and their process. Continuous random variables always have infinite entropy, so it is impossible to describe them perfectly with only finitely many bits. Instead rate distortion theory studies a  lossy data compression method achieving some distortion constrains.
For a couple $(X, Y)$ of random variables we denote its mutual information by $I(X,Y)$. Let $(\mathcal{X}, T)$  be a dynamical system with a distance $d$ on $\mathcal{X}$. Take an invariant probability $\mu\in M(\mathcal{X}, T)$. For  a positive number $\epsilon$ we define the rate distortion function $R_{\mu}(\epsilon)$ as the infimum of
\begin{align}\label{mu}
\dfrac{I(X,Y)}{n},
\end{align}
where $n$ runs over all natural numbers, and $X$ and $Y=(Y_{0}, \cdots, Y_{n-1})$ are random variables defined on  some probability space $(\Omega, \mathbb{P})$ such that
\begin{itemize}
	\item $X$ takes values in $\mathcal{X}$  and its law is given by $\mu$.
	\item Each $Y_{k}$ takes values in $\mathcal{X}$ and $Y$ approximates the process $(X, TX, \cdots, T^{n-1}X)$ in the sense that
	\begin{align}\label{co1}
	\mathbb{E}\left( \dfrac{1}{n}\sum\limits_{k=0}^{n-1}d(T^{k}X, Y_{k})\right) < \epsilon.
	\end{align}
	
\end{itemize}
Here $\mathbb{E}$ is the expectation with respect to the probability measure $\mathbb{P}$. Note that $R_{\mu}(\epsilon)$ depends on the distance $d$ although it is not explicitly  written in the notation.

We define the { upper and  lower rate distortion dimension} by
\begin{align*}
&\overline{\rm rdim}(\mathcal{X}, T, d, \mu)=\limsup\limits_{\epsilon\rightarrow 0} \dfrac{R_{\mu}(\epsilon)}{\log (1/ \epsilon)},\\ &
\underline{\rm rdim}(\mathcal{X}, T, d, \mu)=\liminf\limits_{\epsilon\rightarrow 0} \dfrac{R_{\mu}(\epsilon)}{\log (1/ \epsilon)}.
\end{align*}
When the upper and lower limits coincide,  we denote their common value ${\rm rdim}(\mathcal{X}, T, d, \mu)$.

\section{Mean Hausdorff dimension with sub-additive potentials bounds  mean dimension with sub-additive potentials }\label{a}
In this section, we prove Theorem \ref{bound} and Proposition \ref{pro}. The main issue is to prove that Hausdorff dimension with sub-additive potentials bounds mean dimension with sub-additive potentials.
\subsection{Proof of Proposition \ref{pro} }
\begin{lem}\cite{WP}\label{wp}
	Let $a_{1}, \cdots, a_{n}$ be real numbers and ${\bf p}=(p_{1}, \cdots, p_{n})$ a probability vector. For $\epsilon >0$
	$$ \sum\limits_{i=1}^{n} (-p_{i} \log p_{i}+ p_{i} a_{i} \log(1/ \epsilon)) \leq \log (\sum\limits_{i=1}^{n}(1/ \epsilon)^{a_{i}})$$ and equality holds iff
	$$p_{i}= \dfrac{(1/\epsilon)^{a_{i}}}{\sum_{j=1}^{n} (1/\epsilon)^{a_{j}}}. $$
\end{lem}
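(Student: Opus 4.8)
The plan is to recognize this lemma as the finite-dimensional Gibbs variational principle, which follows at once from the non-negativity of relative entropy, or equivalently from Jensen's inequality applied to the concave function $\log$.

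First I would set $Z=\sum_{j=1}^{n}(1/\epsilon)^{a_{j}}$, which is strictly positive, and define the probability vector $\mathbf{q}=(q_{1},\dots,q_{n})$ by $q_{i}=(1/\epsilon)^{a_{i}}/Z$. Terms with $p_{i}=0$ contribute nothing to the left-hand side, by the convention $0\log 0=0$, so I may restrict attention to the index set $S=\{i:p_{i}>0\}$ and rewrite the left-hand side as $\sum_{i\in S}p_{i}\log\bigl((1/\epsilon)^{a_{i}}/p_{i}\bigr)$. Since $\log$ is concave and $(p_{i})_{i\in S}$ sums to $1$, Jensen's inequality gives $\sum_{i\in S}p_{i}\log\bigl((1/\epsilon)^{a_{i}}/p_{i}\bigr)\le\log\bigl(\sum_{i\in S}(1/\epsilon)^{a_{i}}\bigr)\le\log Z$, which is the asserted inequality.

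For the equality statement I would track the two inequalities just used: equality in the Jensen step forces $(1/\epsilon)^{a_{i}}/p_{i}$ to be independent of $i$ over $S$, while equality in the final step forces $S=\{1,\dots,n\}$; combining these with $\sum_{i}p_{i}=1$ yields $p_{i}=(1/\epsilon)^{a_{i}}/Z=q_{i}$ for every $i$. An equivalent and perhaps cleaner route for the equality case is to observe that the gap between the two sides equals the Kullback--Leibler divergence $D(\mathbf{p}\,\|\,\mathbf{q})=\sum_{i}p_{i}\log(p_{i}/q_{i})$, which is non-negative and vanishes precisely when $\mathbf{p}=\mathbf{q}$.

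I do not expect any real obstacle here: the statement is elementary and self-contained, and indeed is quoted from \cite{WP}. The only points requiring a little care are the treatment of vanishing entries of $\mathbf{p}$ and the bookkeeping for the equality characterization, neither of which is substantive.
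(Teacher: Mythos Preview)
Your proof is correct. The paper does not supply its own proof of this lemma but simply cites it from Walters \cite{WP}; your argument via Jensen's inequality (equivalently, non-negativity of the Kullback--Leibler divergence) is the standard one and is essentially what appears in that reference.
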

\begin{prop}{\label{pro}}
	Let $(\mathcal{X}, T)$ be a dynamical system with a metric ${ d}$ and  an invariant probability measure $\mu$. Let $\mathcal{F}=\left\lbrace \varphi_{n} \right\rbrace_{n=1}^{\infty} $  be a sub-additive potential such that $\mathcal{F}_{*}(\mu)\neq -\infty$. Then
	\begin{align*}
	&\overline{\rm{rdim}}(\mathcal{X},T, d, \mu)+\mathcal{F}_{*}(\mu)\leq \overline{\rm{mdim}}_{M}(\mathcal{X},T, d, \mathcal{F}),\\&
	\underline{\rm{rdim}}(\mathcal{X},T, d, \mu)+\mathcal{F}_{*}(\mu)\leq \underline{\rm{mdim}}_{M}(\mathcal{X},T, d, \mathcal{F}).
	\end{align*}		
\end{prop}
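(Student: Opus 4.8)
The plan is to bound the rate distortion function $R_\mu(\epsilon)$ from above, for each fixed small $\epsilon$, by a quantity involving the subadditive pressure $\log\#(\mathcal{X}, d_N, \varphi_N, \epsilon)$ minus a term converging to the Lyapunov exponent $\mathcal{F}_*(\mu)$. Concretely, fix $\epsilon > 0$ and a large $N$. Choose an open cover $\mathcal{X} = U_1 \cup \cdots \cup U_m$ with $\operatorname{diam}_{d_N} U_i < \epsilon$ that nearly realizes $\#(\mathcal{X}, d_N, \varphi_N, \epsilon)$, i.e. $\sum_{i=1}^m (1/\epsilon)^{\sup_{U_i}\varphi_N}$ is close to that infimum. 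From this cover I would build a Borel partition $\{P_i\}$ refining it (with $P_i \subset U_i$), pick a point $y_i \in U_i$ in each cell, and define a random variable $Y$ on $\{y_1, \ldots, y_m\}$ by letting $Y = y_i$ whenever $X \in P_i$, where $\mathrm{Law}(X) = \mu$. Since $\operatorname{diam}_{d_N} U_i < \epsilon$, the approximation condition $\mathbb{E}\big(\frac1N \sum_{k=0}^{N-1} d(T^k X, Y_k)\big) < \epsilon$ holds with $Y = (Y_0, \ldots, Y_{N-1})$ the coordinates of the chosen point; hence $R_\mu(\epsilon) \le \frac{1}{N} I(X; Y) \le \frac{1}{N} H(Y)$, because $Y$ is a deterministic function of $X$.

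Next I would estimate $H(Y) = \sum_i -\mu(P_i)\log\mu(P_i)$. This is where Lemma \ref{wp} enters: setting $p_i = \mu(P_i)$ and $a_i = \sup_{U_i}\varphi_N$ with parameter $1/\epsilon$, the lemma gives
$$
H(Y) + \log(1/\epsilon)\sum_i \mu(P_i)\, a_i \;\le\; \log\Big(\sum_i (1/\epsilon)^{a_i}\Big) \;=\; \log\#(\mathcal{X}, d_N, \varphi_N, \epsilon) + o(1).
$$
Now $\sum_i \mu(P_i) \sup_{U_i}\varphi_N \ge \sum_i \int_{P_i} \varphi_N\, d\mu = \int \varphi_N\, d\mu$, so rearranging,
$$
\frac{1}{N} H(Y) \;\le\; \frac{\log\#(\mathcal{X}, d_N, \varphi_N, \epsilon)}{N} \;-\; \log(1/\epsilon)\cdot\frac{\int \varphi_N\, d\mu}{N} + o(1).
$$
Therefore $R_\mu(\epsilon) \le P(\mathcal{X}, T, d, \mathcal{F}, \epsilon) - \log(1/\epsilon)\cdot \frac{1}{N}\int\varphi_N\,d\mu + o(1)$ as $N \to \infty$; letting $N \to \infty$ and recalling $\frac1N \int \varphi_N\,d\mu \to \mathcal{F}_*(\mu)$, I obtain
$$
R_\mu(\epsilon) + \mathcal{F}_*(\mu)\log(1/\epsilon) \;\le\; P(\mathcal{X}, T, d, \mathcal{F}, \epsilon).
$$
Dividing by $\log(1/\epsilon)$ and taking $\limsup$ (resp. $\liminf$) as $\epsilon \to 0$ yields both inequalities of the proposition.

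The main technical obstacle is the interplay of the two limits $N \to \infty$ and $\epsilon \to 0$, together with the fact that $\sup_{U_i}\varphi_N$ overestimates $\int_{P_i}\varphi_N\,d\mu$ by an amount controlled by the oscillation of $\varphi_N$ on sets of $d_N$-diameter $< \epsilon$. In the pure pressure setting one would invoke bounded distortion to make this error $o(N)$, but note the proposition as stated does not assume bounded distortion; instead the cover-based definition of $\#$ already uses $\sup_{U_i}\varphi_N$, so the estimate $\sum_i\mu(P_i)\sup_{U_i}\varphi_N \ge \int\varphi_N\,d\mu$ is a clean one-sided bound and no distortion control is needed — the slack only helps us. A secondary point requiring care is the hypothesis $\mathcal{F}_*(\mu) \neq -\infty$, which guarantees $\frac1N\int\varphi_N\,d\mu$ stays bounded below so that the product $\log(1/\epsilon)\cdot\frac1N\int\varphi_N\,d\mu$ behaves well in the limit and the rearrangement above is legitimate; if $\mathcal{F}_*(\mu) = -\infty$ the left-hand side is $-\infty$ and the statement is vacuous. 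Finally I should double-check that the almost-optimal cover can be taken to consist of finitely many sets (it can, by compactness) so that $H(Y)$ is finite and Lemma \ref{wp} applies verbatim.
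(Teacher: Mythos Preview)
Your proposal is correct and follows essentially the same argument as the paper: build a finite-valued $Y$ from a partition subordinate to a $d_N$-small open cover, bound $I(X;Y)\le H(Y)$, apply Lemma~\ref{wp} with $a_i=\sup_{U_i}\varphi_N$, use the one-sided bound $\sum_i \mu(P_i)\sup_{U_i}\varphi_N \ge \int\varphi_N\,d\mu$, and let $N\to\infty$ then $\epsilon\to 0$. Your additional remarks about why bounded distortion is not needed and why $\mathcal F_*(\mu)\ne -\infty$ matters are accurate; the only cosmetic difference is that the paper works with an arbitrary admissible cover and then passes to the infimum, so your ``$+o(1)$'' from choosing a nearly optimal cover is unnecessary.
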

\begin{proof}
	Let $X$ be a random variable taking values in $\mathcal{X}$ and obeying $\mu$. Let $N>0$ and let $\mathcal{X}=U_{1}\cup\cdots\cup U_{n}$ be an open cover with diam($U_{i}, d_{N})< \epsilon$ for all $i$. Pick $x_{i} \in U_{i}$. We define a random variable $Y$ by
	$$Y=(x_{i}, Tx_{i}, \cdots, T^{N-1}x_{i})  ~~~\text{if} ~~X\in U_{i} \setminus (U_{1}\cup\cdots\cup U_{i-1})$$
	Obviously
	$$ \dfrac{1}{N}\sum\limits_{ k=0}^{N-1}\mathbb{E} d(T^{k}X, Y_{k})<\epsilon.$$
	Set $p_{i}=\mu(U_{i}\setminus (U_{1}\cup\cdots \cup U_{i-1})).$ Then
	$$ I(X;Y)\leq H(Y) \leq -\sum\limits_{ i=1}^{n} p_{i}\log p_{i}.$$
	Set $a_{i}=\sup_{U_{i}}  \varphi_{N}.$ It follows that
	\begin{align*}
	R(d, \mu, \epsilon)+(\dfrac{1}{N}\int_{\mathcal{X}}  \varphi_{N} d \mu) \log 1/\epsilon&\leq \dfrac{I(X;Y)}{N}+ (\dfrac{1}{N}\int_{\mathcal{X}} \varphi_{N} d \mu)\log 1/\epsilon\\&\leq \dfrac{1}{N}\sum\limits_{ i=1}^{n}(-p_{i}\log p_{i}+p_{i}a_{i}\log(1/\epsilon))\\&\leq\dfrac{1}{N}\log(\sum\limits_{ i=1}^{n}(\log(1/\epsilon)^{a_{i}}) ~~~\text{by Lemma \ref{wp}.}
	\end{align*}
	Hence
	$$ R(d,\mu, \epsilon)+(\dfrac{1}{N}\int_{\mathcal{X}} \varphi_{N} d \mu) \log {1}/{\epsilon}\leq \dfrac{\log\#(\mathcal{X}, d_{N},  \varphi_{N},\epsilon)}{N}.$$
	Let $N\rightarrow \infty$. Then
	$$ R(d, \mu, \epsilon)+\mathcal{F}_{*}(\mu)\log{1}/{\epsilon}\leq P(\mathcal{X}, T, d, \mathcal{F}, \epsilon).$$
	Divide this by $\log(1/\epsilon)$ and take the limit of $\epsilon\rightarrow 0$.
\end{proof}

\subsection{Proof of Theorem \ref{bound}}
In order to prove Theorem \ref{bound}, we need to give an additional issue around the quantity ${\rm Widim}_{\epsilon}(\mathcal{X}, d, \varphi)$.
Let $P$ be a simplicial complex and $a\in P$.
Recall that  {\bf small local dimension}(\cite{LT}).
$${\rm dim}_{a}^{'}P =\min \left\lbrace  {\rm dim} \Delta : \Delta\subset P ~\text{is a simplex containing a} \right\rbrace .$$
The local dimension ${\rm dim}_{a}P$ is a topological quantity. However, the small local dimension ${\rm dim}_{a}^{'}P$ is a combinatorial quantity. It depends on  the combinatorial structure of $P$.
In \cite{LT}, authors introduced the other definition $\epsilon$-width dimension with potential $ \rm{Widim}_{\epsilon}^{'}(\mathcal{X}, d, \varphi) $  by small local dimension and showed  the following result.
\begin{lem}\label{ieq1}\cite{LT}
	\begin{align*}
	{\rm Widim}_{\epsilon}^{'}(\mathcal{X}, d, \varphi)\leq {\rm Widim}_{\epsilon}(\mathcal{X}, d, \varphi)\leq {\rm Widim}_{\epsilon}^{'}(\mathcal{X}, d, \varphi)+{\rm var}_{\epsilon}(\varphi,d)
	\end{align*}
	where ${\rm var}_{\epsilon}(\varphi,d)=\sup\left\lbrace \left|  \varphi(x)-\varphi(y)\right|d(x,y)\leq\epsilon \right\rbrace .$
\end{lem}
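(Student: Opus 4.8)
The plan is to prove the two inequalities separately; the left one is essentially a tautology and the right one carries all the content. For the left inequality I would simply observe that for every point $a$ of a finite simplicial complex $P$ one has $\mathrm{dim}_{a}'P\le\mathrm{dim}_{a}P$, since the minimum of $\mathrm{dim}\,\Delta$ over simplices $\Delta$ containing $a$ cannot exceed the maximum. Consequently, for \emph{any} $\epsilon$-embedding $f\colon\mathcal{X}\to P$ the pointwise inequality $\mathrm{dim}_{f(x)}'P+\varphi(x)\le\mathrm{dim}_{f(x)}P+\varphi(x)$ holds, and taking the maximum over $x\in\mathcal{X}$ and then the infimum over all pairs $(P,f)$ gives $\mathrm{Widim}_{\epsilon}'(\mathcal{X},d,\varphi)\le\mathrm{Widim}_{\epsilon}(\mathcal{X},d,\varphi)$.

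For the right inequality I would start from a near-optimal pair: fix $\eta>0$ and an $\epsilon$-embedding $f\colon\mathcal{X}\to P$ with $L:=\max_{x}(\mathrm{dim}_{f(x)}'P+\varphi(x))\le\mathrm{Widim}_{\epsilon}'(\mathcal{X},d,\varphi)+\eta$. The first step is a uniformization of the $\epsilon$-embedding condition: equipping $|P|$ with a metric (say from a linear embedding into some $\mathbb{R}^{N}$), a compactness argument shows there is $\rho>0$ such that $\mathrm{diam}_{\mathcal{X}}f^{-1}(A)<\epsilon$ whenever $A\subset|P|$ has diameter $<\rho$ — otherwise one extracts convergent sequences $x_{n},y_{n}$ with $d(x_{n},y_{n})$ bounded away from $0$ while $f(x_{n}),f(y_{n})$ lie in sets of shrinking diameter, producing two points of a single fibre at distance $\ge\epsilon$, which is impossible. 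Passing to a sufficiently fine subdivision $P'$ of $P$ (the underlying space and the map $f$ are unchanged), I may then assume $\mathrm{diam}_{\mathcal{X}}f^{-1}(\overline{\sigma})<\epsilon$ for every simplex $\sigma$ of $P'$. The second step is to shrink the target: let $M\subset P'$ be the subcomplex consisting of all faces of simplices $\tau$ of $P'$ whose relative interior meets $f(\mathcal{X})$. Since each $f(x)$ lies in the relative interior of its carrier, $f(\mathcal{X})\subset|M|$, so $f\colon\mathcal{X}\to M$ is still an $\epsilon$-embedding. Now, given $x\in\mathcal{X}$, pick a simplex $\rho$ of $M$ containing $f(x)$ with $\mathrm{dim}\,\rho=\mathrm{dim}_{f(x)}M$; by construction $\rho$ is a face of some $\tau$ in $P'$ with $f(x'')$ in the relative interior of $\tau$ for some $x''\in\mathcal{X}$, and then, $\tau$ being the carrier of $f(x'')$ in $P'$, $\mathrm{dim}\,\rho\le\mathrm{dim}\,\tau=\mathrm{dim}_{f(x'')}'P'\le\mathrm{dim}_{f(x'')}'P\le L-\varphi(x'')$. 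Since $f(x)$ and $f(x'')$ both lie in $\overline{\tau}$, the points $x,x''$ both lie in $f^{-1}(\overline{\tau})$, hence $d(x,x'')<\epsilon$ and $|\varphi(x)-\varphi(x'')|\le\mathrm{var}_{\epsilon}(\varphi,d)$; therefore $\mathrm{dim}_{f(x)}M+\varphi(x)\le L+\mathrm{var}_{\epsilon}(\varphi,d)$. Taking the maximum over $x$, the infimum over $(P,f)$, and letting $\eta\to0$ yields $\mathrm{Widim}_{\epsilon}(\mathcal{X},d,\varphi)\le\mathrm{Widim}_{\epsilon}'(\mathcal{X},d,\varphi)+\mathrm{var}_{\epsilon}(\varphi,d)$.

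The step I expect to be the only real obstacle is the uniformization lemma in the second paragraph — upgrading ``every fibre has diameter $<\epsilon$'' to ``preimages of $\rho$-small subsets of $|P|$ have diameter $<\epsilon$'' — together with the care needed to ensure that the modified map into $M$ remains a genuine $\epsilon$-embedding (strict inequality); everything after that is bookkeeping with carriers and faces. This is precisely the combinatorial–topological lemma of Lindenstrauss–Tsukamoto \cite{LT}, and the sub-additive structure of $\mathcal{F}$ plays no role here: one applies the lemma with the single function $\varphi=\varphi_{N}$ and the metric $d=d_{N}$, and the sub-additivity of $\{\varphi_{n}\}$ only enters later when dividing by $N$ and letting $N\to\infty$.
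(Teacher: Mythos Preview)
The paper does not actually prove this lemma: it is stated with the citation \cite{LT} (the potential version is in fact due to Tsukamoto \cite{MT}) and no proof is given. Your argument is correct and is essentially the standard one found in those references: the trivial inequality $\dim_{a}'P\le\dim_{a}P$ for the left-hand side, and for the right-hand side a fine subdivision followed by passage to the essential subcomplex, using the compactness upgrade from ``fibres have diameter $<\epsilon$'' to ``preimages of small sets have diameter $<\epsilon$'' so that nearby points in the complex come from $\epsilon$-close points in $\mathcal{X}$. Your remark that the sub-additive structure of $\mathcal{F}$ is irrelevant here, and that the lemma is applied with a fixed $\varphi=\varphi_{N}$, is exactly how the paper uses it.
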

If we put some bound distortion assumption on $\mathcal{F}$, the we can also show the equivalence of these two quantities.

\begin{prop}\label{proa}
	Assume that $\mathcal{F}=\left\lbrace \varphi_{n} \right\rbrace_{n=1}^{\infty} $ satisfies bounded distortion.
	Then we have
	$$ {\rm{mdim}}(\mathcal{X},T,  \mathcal{F})=\lim\limits_{ \epsilon \rightarrow 0}\left( \lim\limits_{N\rightarrow\infty} \dfrac{{\rm Widim}_{\epsilon}^{'}(\mathcal{X}, d_{N}, \varphi_{N})}{N}\right) .$$ Here ${{  \rm Widim}_{\epsilon}^{'}(\mathcal{X}, d_{N}, \varphi_{N})}$ is subadditive in $N$ and monotone in $\epsilon$.
\end{prop}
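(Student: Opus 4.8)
The plan is to deduce the identity from Lemma \ref{ieq1} applied along the orbit metric $d_N$ with potential $\varphi_N$. Writing out that lemma for the pair $(d_N,\varphi_N)$ gives
\[
{\rm Widim}_{\epsilon}^{'}(\mathcal{X}, d_{N}, \varphi_{N}) \le {\rm Widim}_{\epsilon}(\mathcal{X}, d_{N}, \varphi_{N}) \le {\rm Widim}_{\epsilon}^{'}(\mathcal{X}, d_{N}, \varphi_{N}) + {\rm var}_{\epsilon}(\varphi_{N}, d_{N}),
\]
so after dividing by $N$ the quantities ${\rm Widim}_{\epsilon}(\mathcal{X},d_N,\varphi_N)/N$ and ${\rm Widim}_{\epsilon}^{'}(\mathcal{X},d_N,\varphi_N)/N$ differ by at most ${\rm var}_{\epsilon}(\varphi_N,d_N)/N$. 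The bounded distortion hypothesis says precisely that $\lim_{\epsilon\to 0}\limsup_{N\to\infty} {\rm var}_{\epsilon}(\varphi_N,d_N)/N = 0$, so when we take $\limsup_{N\to\infty}$ and then $\epsilon\to 0$ the error term vanishes, and the two iterated limits agree. Since the left-hand iterated limit is by definition ${\rm mdim}(\mathcal{X},T,\mathcal{F})$ (using that, by bounded distortion, the $d_N$-version of ${\rm Widim}_\epsilon$ computes the same mean dimension as any fixed metric — this is already implicit in the definition \eqref{mdim} and the remark on independence of $d$), the claimed equality follows.

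Before that, I must justify the two structural assertions made in the statement: that $N\mapsto {\rm Widim}_{\epsilon}^{'}(\mathcal{X}, d_{N}, \varphi_{N})$ is subadditive and that $\epsilon\mapsto {\rm Widim}_{\epsilon}^{'}(\mathcal{X}, d_{N}, \varphi_{N})$ is monotone, so that the iterated limit on the right actually exists. Monotonicity in $\epsilon$ is immediate: a smaller $\epsilon$ demands a finer ($\epsilon$-)embedding, hence the infimum defining ${\rm Widim}_{\epsilon}^{'}$ is over a smaller collection of maps, so the quantity is nonincreasing in $\epsilon$ (note $\varphi_N$ is fixed here, so there is no interference from the potential). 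Subadditivity in $N$ follows the usual pattern: given $\epsilon$-embeddings $f\colon \mathcal{X}\to P$ with respect to $d_M$ and $g\colon \mathcal{X}\to Q$ with respect to $d_{M'}$, the map $x\mapsto (f(x), g(T^{M}x))$ into $P\times Q$ is an $\epsilon$-embedding with respect to $d_{M+M'}$, the small local dimension of a product at a point is the sum of the small local dimensions of the factors, and $\varphi_{M+M'}(x)\le \varphi_{M}(x)+\varphi_{M'}(T^{M}x)$ by sub-additivity; taking maxima over $x$ and then infima over $f,g$ gives ${\rm Widim}_{\epsilon}^{'}(\mathcal{X}, d_{M+M'}, \varphi_{M+M'}) \le {\rm Widim}_{\epsilon}^{'}(\mathcal{X}, d_{M}, \varphi_{M}) + {\rm Widim}_{\epsilon}^{'}(\mathcal{X}, d_{M'}, \varphi_{M'})$. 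Hence $\lim_{N\to\infty}$ exists by Fekete, and then $\lim_{\epsilon\to 0}$ exists by monotonicity.

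The only genuinely delicate point is the behavior of the product's small local dimension: one needs that if $a$ lies in a minimal-dimension simplex $\Delta$ of $P$ and $b$ in a minimal-dimension simplex $\Sigma$ of $Q$, then $\Delta\times\Sigma$ (suitably triangulated) is a simplex of the product complex containing $(a,b)$ of dimension ${\rm dim}\,\Delta + {\rm dim}\,\Sigma$, and that no smaller-dimensional simplex of the product complex contains $(a,b)$ — this is exactly the additivity of ${\rm dim}'$ under products recorded in \cite{LT}, which I would cite rather than reprove. Everything else is bookkeeping with the two limits and the bounded-distortion squeeze. I expect the main obstacle to be purely expository: making sure the $d_N$-versions of Lemma \ref{ieq1} and of the independence-of-metric property in \eqref{mdim} are invoked cleanly, since the potential $\varphi_N$ depends on $N$ and one must be careful that the error ${\rm var}_{\epsilon}(\varphi_N,d_N)/N$ is controlled by the bounded-distortion hypothesis in exactly the form stated and not some stronger uniform version.
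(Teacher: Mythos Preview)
Your proposal is correct and follows essentially the same approach as the paper: apply Lemma~\ref{ieq1} to the pair $(d_N,\varphi_N)$, divide by $N$, and use the bounded distortion hypothesis to kill the error term ${\rm var}_{\epsilon}(\varphi_N,d_N)/N$ in the iterated limit. The paper's proof is in fact terser than yours, as it simply asserts subadditivity in $N$ and monotonicity in $\epsilon$ without the product-construction justification you supply.
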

\begin{proof}
	Recall that we defined
	$${\rm{mdim}}(\mathcal{X},T,  \mathcal{F})=\lim\limits_{ \epsilon \rightarrow 0}\left( \lim\limits_{N\rightarrow\infty} \dfrac{{\rm Widim}_{\epsilon}(\mathcal{X}, d_{N}, \varphi_{N})}{N}\right) . $$
	From Lemma \ref{ieq1} and bound distortion, we have
	$${\rm Widim}_{\epsilon}^{'}(\mathcal{X}, d_{N},  \varphi_{N})\leq {\rm Widim}_{\epsilon}(\mathcal{X}, d_{N}, \varphi_{N})\leq {\rm Widim}_{\epsilon}^{'}(\mathcal{X}, d_{N},  \varphi_{N})+{\rm var}_{\epsilon}(\varphi_{N},d_{N}). $$
	By Proposition \ref{ieq1}, we can get the result.	
\end{proof}
\begin{theorem}\label{bound}
	Let $(\mathcal{X},T)$ be a dynamical system with a metric $d$, then
	$${\rm mdim}_{H}(\mathcal{X},T, d, \mathcal{F})\leq \underline{\rm{mdim}}_{M}(\mathcal{X},T, d, \mathcal{F}).$$ If $\mathcal{F}$ satisfies bounded distortion and there exists $K>0$ such that $|\varphi_{n+1}-\varphi_{n}|<K$ for every $n$, then
	$$ {\rm mdim}(\mathcal{X},T,  \mathcal{F})\leq {\rm mdim}_{H}(\mathcal{X},T, d, \mathcal{F}).$$ 	
\end{theorem}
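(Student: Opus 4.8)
The plan is to prove the two inequalities separately, handling the first (mean Hausdorff dimension versus lower metric mean dimension) by a direct comparison of the relevant combinatorial quantities at each fixed scale $\epsilon$ and each fixed $N$, and the second (mean dimension versus mean Hausdorff dimension) by the dynamical analogue of the classical inequality $\mathrm{Widim}_\epsilon \leq \dim_H$ combined with the bounded-distortion reductions already established.

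First I would prove ${\rm mdim}_{H}(\mathcal{X},T,d,\mathcal{F})\leq \underline{{\rm mdim}}_{M}(\mathcal{X},T,d,\mathcal{F})$. Fix $\epsilon>0$ and $N$, and abbreviate the metric $d_N$ and potential $\varphi_N$. The key observation is that a minimizing open cover $\mathcal{X}=U_1\cup\cdots\cup U_n$ with ${\rm diam}\,U_i<\epsilon$ realizing (nearly) the infimum defining $\#(\mathcal{X},d_N,\varphi_N,\epsilon)$ also serves as a covering family for the Hausdorff-type sum $H^s_\epsilon$: for any $s$ we have $\sum_i ({\rm diam}\,U_i)^{s-\sup_{U_i}\varphi_N}\leq \epsilon^{\,s-\max\varphi_N}\sum_i(1/\epsilon)^{\sup_{U_i}\varphi_N}$ provided $s\geq\max\varphi_N$ and ${\rm diam}\,U_i\leq\epsilon<1$, so that whenever $\epsilon^{\,s-\max\varphi_N}\cdot\#(\mathcal{X},d_N,\varphi_N,\epsilon)<1$ one gets $H^s_\epsilon(\mathcal{X},d_N,\varphi_N)<1$ and hence $\dim_H(\mathcal{X},d_N,\varphi_N,\epsilon)\leq s$. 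Optimizing over $s$ yields, up to an additive error of order $\max\varphi_N$ divided by $\log(1/\epsilon)$,
\begin{align*}
\dim_H(\mathcal{X},d_N,\varphi_N,\epsilon)\leq \frac{\log\#(\mathcal{X},d_N,\varphi_N,\epsilon)}{\log(1/\epsilon)}+O(1).
\end{align*}
Dividing by $N$, letting $N\to\infty$ so that the left side converges to (a term going into) ${\rm mdim}_H$ and the right side produces $P(\mathcal{X},T,d,\mathcal{F},\epsilon)/\log(1/\epsilon)$, then taking $\liminf_{\epsilon\to 0}$, gives the claimed bound. I would be careful that the additive $O(1)$ error, after division by $N$ and then by $\log(1/\epsilon)$, disappears in the limit; this is where the hypothesis controlling $\max\varphi_N$ (implicit via $|\varphi_{n+1}-\varphi_n|\leq K$, giving $\max\varphi_N=O(N)$) is used, although for this first inequality no distortion assumption is needed.

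Next I would prove ${\rm mdim}(\mathcal{X},T,\mathcal{F})\leq {\rm mdim}_H(\mathcal{X},T,d,\mathcal{F})$ under bounded distortion and the bound $|\varphi_{n+1}-\varphi_n|<K$. By Proposition \ref{proa}, bounded distortion lets me compute ${\rm mdim}(\mathcal{X},T,\mathcal{F})$ using the combinatorial quantity $\mathrm{Widim}'_\epsilon$ built from small local dimension. The heart of the matter is then the static inequality $\mathrm{Widim}'_\epsilon(\mathcal{X},d_N,\varphi_N)\leq \dim_H(\mathcal{X},d_N,\varphi_N,\epsilon)+ (\text{error})$, which is the potential-weighted, combinatorial version of the Pontrjagin--Schnirelmann-type estimate relating width dimension to Hausdorff dimension; this is precisely the argument of Lindenstrauss--Tsukamoto (\cite{LT}) adapted to a sub-additive potential, where from a countable $\epsilon$-cover realizing $H^s_\epsilon<1$ one extracts a finite subcover, builds its nerve, and uses a partition-of-unity map into that nerve as an $\epsilon$-embedding, the small local dimension being controlled by the multiplicity of the cover which in turn is controlled by the Hausdorff sum. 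Applying this with $d_N,\varphi_N$, dividing by $N$, letting $N\to\infty$ and then $\epsilon\to 0$, the error terms are absorbed using bounded distortion (to pass between $\varphi_N$ on nearby points) and the uniform bound $|\varphi_{n+1}-\varphi_n|<K$ (to keep $\sup$-oscillations of $\varphi_N$ over $\epsilon$-balls under control, i.e. $O(N\cdot o_\epsilon(1))$).

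The main obstacle is the second inequality: transporting the nerve/partition-of-unity construction of \cite{LT} to the sub-additive setting so that the potential weights are respected. Specifically, when passing from the infinite cover witnessing $H^s_\epsilon<1$ to a finite one and then to its nerve, one must ensure that the weighted count $\sum (1/\epsilon)^{\sup\varphi_N}$ (which controls $\mathrm{Widim}'_\epsilon$ via the multiplicity of the nerve) is still dominated by the Hausdorff-type sum $\sum({\rm diam})^{s-\sup\varphi_N}$; this requires that the diameters be genuinely $<\epsilon$ (so $({\rm diam})^{-\sup\varphi_N}\leq(1/\epsilon)^{\sup\varphi_N}$ when $\sup\varphi_N\geq 0$) and a separate argument, using the hypothesis $|\varphi_{n+1}-\varphi_n|<K$, to handle sets on which $\varphi_N$ is negative or where the oscillation of $\varphi_N$ across a single cover element is not yet negligible. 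Everything else — the limits existing, monotonicity in $\epsilon$, subadditivity in $N$ — is already granted by the definitions in Section \ref{mutal} and by Proposition \ref{proa}.
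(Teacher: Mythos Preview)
Your approach to the first inequality is correct in spirit and in fact simpler than you indicate: since $s\geq\max_{\mathcal{X}}\varphi_N$ guarantees $s-\sup_{U_i}\varphi_N\geq 0$, one has directly $({\rm diam}\,U_i)^{s-\sup_{U_i}\varphi_N}\leq \epsilon^{s-\sup_{U_i}\varphi_N}=\epsilon^s(1/\epsilon)^{\sup_{U_i}\varphi_N}$, so $H^s_\epsilon(\mathcal{X},d_N,\varphi_N)\leq \epsilon^s\cdot\#(\mathcal{X},d_N,\varphi_N,\epsilon)$ with no additive $O(1)$ error at all. This gives $\dim_H(\mathcal{X},d_N,\varphi_N,\epsilon)\leq \log\#(\mathcal{X},d_N,\varphi_N,\epsilon)/\log(1/\epsilon)$ cleanly, and neither the bounded-distortion hypothesis nor the bound on $|\varphi_{n+1}-\varphi_n|$ is needed here --- exactly as you note at the end.

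For the second inequality, however, there is a genuine gap. Your proposed mechanism --- take a cover witnessing $H^s_\epsilon<1$, pass to its nerve, and bound the small local dimension by the \emph{multiplicity} of the cover, which you claim is ``controlled by the Hausdorff sum'' --- does not work. A small value of $\sum_i({\rm diam}\,E_i)^{s-\sup_{E_i}\varphi_N}$ imposes no bound whatsoever on how many of the $E_i$ can contain a given point; one can always add redundant small sets to a cover without increasing the Hausdorff sum much, driving the order of the nerve arbitrarily high. So the step ``small local dimension $\leq$ multiplicity $\leq$ (something involving $s$)'' fails.

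The paper's route is genuinely different. It invokes Lemma~\ref{lema} (taken from Tsukamoto's work on mean dimension with potential), whose mechanism is measure-theoretic rather than combinatorial: one first builds, for each $\epsilon>0$, a Lipschitz map $f:\mathcal{X}\to[0,1]^M$ with $d(x,y)\geq\epsilon\Rightarrow\|f(x)-f(y)\|=1$, and then $f_N:\mathcal{X}\to[0,1]^{MN}$ by stacking $f,f\circ T,\ldots$. The point of Lemma~\ref{lema} is that a bound of the form $4^{MN}(L+1)^{1+s'+\|\varphi_N\|_\infty}H^{s'}_1(\mathcal{X},d_N,\varphi_N)<1$ forces ${\rm Widim}'_\epsilon(\mathcal{X},d_N,\varphi_N)\leq s'+1$; the proof of that lemma goes through Lebesgue-measure estimates on projections in the cube (the preceding lemma on $\nu_N(\bigcup_{|A|\geq n}\pi_A^{-1}\pi_A K)\leq 4^N H^n_\infty(K)$), not through nerve multiplicity. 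The hypothesis $|\varphi_{n+1}-\varphi_n|<K$ enters not to control oscillation over $\epsilon$-balls (that is what bounded distortion does, via Proposition~\ref{proa}) but to bound $\|\varphi_N\|_\infty$ linearly in $N$, so that the factor $(L+1)^{\|\varphi_N\|_\infty}$ can be absorbed into a geometric term $\{4^M(L+1)^{1+s+\tau+K+\|\varphi_1\|_\infty}\delta^\tau\}^{N}<1$ for suitably chosen $\delta$ and $\tau$. You would need to replace your nerve argument by this Lipschitz-embedding/projection argument to close the gap.
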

\begin{proof}
	We firstly show that ${\rm mdim}_{H}(\mathcal{X},T, d, \mathcal{F})\leq \underline{\rm{mdim}}_{M}(\mathcal{X},T, d, \mathcal{F}).$ Let $0<\epsilon<1$ and $N>0$. Let $\mathcal{X}=U_{1}\cup\cdots\cup U_{n}$ be an open cover with ${\rm diam}(U_{i}, d_{N})<\epsilon$. For $s\geq \max_{\mathcal{X}}  \varphi_{N}$
	
	\begin{align*}
	H_{\epsilon}^{s}(\mathcal{X}, d_{N},  \varphi_{N})&\leq \sum\limits_{i=1}^{n}({\rm diam}(U_{i}, d_{N}))^{s-\sup_{U_{i}} \varphi_{N}}\\&\leq\sum_{i=1}^{n}\epsilon^{s-\sup_{U_{i}} \varphi_{N}}=\epsilon^{s}\cdot \sum\limits_{ i=1}^{n}(1/\epsilon)^{\sup_{U_{i}}  \varphi_{N}}.
	\end{align*}	
	Hence
	$$ H_{\epsilon}^{s}(\mathcal{X}, d_{N},  \varphi_{N})\leq \epsilon^{s} \cdot \#(\mathcal{X}, d_{N}, \varphi_{N}, \epsilon).$$
	This implies
	$${\rm dim}_{H}(\mathcal{X}, d_{N}, \varphi_{N}, \epsilon)\leq \dfrac{\log \#(\mathcal{X},d_{N},  \varphi_{N}, \epsilon)}{\log(1/\epsilon)}.$$
	Divide this by $N$ and take the limits of $N\rightarrow \infty$:
	$$\limsup\limits_{N\rightarrow \infty} \dfrac{{\rm dim}_{H}(\mathcal{X}, { d_{N}},\varphi_{N},\epsilon)}{N}\leq \dfrac{P(\mathcal{X}, T, d, \mathcal{F}, \epsilon)}{\log (1/\epsilon)}.$$
	Letting $\epsilon \rightarrow 0$, we get ${{\rm mdim}_{H}}(\mathcal{X}, T, d, \mathcal{F})\leq \underline{{\rm mdim_{M}}}(\mathcal{X}, T, d, \mathcal{F}).$
\end{proof}
Next we show that mean Hausdorff dimension with sub-additive potentials bounds mean dimension with sub-additive potentials. We need some lemmas.
Let $(\mathcal{X}, d)$ be a compact metric space. For  $s\geq 0$, we define
\begin{align*}
H_{\infty}^{s}(\mathcal{X},d)= \inf\left\lbrace \sum\limits_{ i=1}^{\infty} ({\rm diam} E_{i})^{s}| \mathcal{X}=\bigcup\limits_{ i=1}^{\infty} E_{i} \right\rbrace .
\end{align*}
We denote the standard Lebesgue measure on $\mathbb{R}^{N}$ by $\nu_{N}$. We set $\left\|  x\right\| =\max\limits_{ 1\leq i \leq N} \left| x_{i}\right|   $ for  $x\in \mathbb{R}^{N}$. For $A\subset \left\lbrace 1,2,\cdots,N \right\rbrace $ we define $\pi_{A}: [0, 1]^{N} \rightarrow [0,1]^{A}$ as the projection to the A-coordinates. The next Lemma was given in \cite{LT}.
\begin{lem}
	Let $K\subset [0,1]^{N}$ be a closed subset  and $0\leq n \leq N $,
	\begin{itemize}
		\item  $\nu_{N}(K) \leq 2^{N} H_{\infty}^{N}(K, \left\| .\right\| ).$
		\item $\nu_{N}(\bigcup\limits_{ |A|\geq n} \pi_{A}^{-1}(\pi_{A}K)) \leq  4^{N} H_{\infty}^{n}(K, \left\|\cdot \right\| ).$
	\end{itemize}
\end{lem}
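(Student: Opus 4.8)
The plan is to deduce both inequalities from a single elementary fact about the norm $\|\cdot\|$: any subset of $\mathbb{R}^{N}$ of diameter $\delta$ is contained in a translate of the cube $[0,\delta]^{N}$, hence has Lebesgue measure at most $\delta^{N}$; more generally, for $A\subset\{1,\dots,N\}$ its image $\pi_{A}(E)$ has diameter at most $\delta$ in the norm $\|\cdot\|$ on $[0,1]^{A}$, so $\nu_{|A|}(\pi_{A}(E))\leq \delta^{|A|}$. Given this, in both parts one fixes a countable cover $K=\bigcup_{i}E_{i}$ nearly realizing the relevant $H_{\infty}^{s}(K,\|\cdot\|)$, estimates the measure of each piece (or of its cylinder over a projection), sums, and then lets the cover optimize. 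The factors $2^{N}$ and $4^{N}$ are harmless slack (they appear if one prefers to enclose each $E_{i}$ in a closed ball of radius ${\rm diam}\,E_{i}$ rather than a minimal cube, or simply to absorb the binomial sum below).

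For the first bullet: with $\delta_{i}={\rm diam}\,E_{i}$ one has $\nu_{N}(E_{i}\cap[0,1]^{N})\leq \delta_{i}^{N}$ by the fact above, so $\nu_{N}(K)\leq\sum_{i}\nu_{N}(E_{i}\cap[0,1]^{N})\leq\sum_{i}\delta_{i}^{N}$, and taking the infimum over covers gives $\nu_{N}(K)\leq H_{\infty}^{N}(K,\|\cdot\|)\leq 2^{N}H_{\infty}^{N}(K,\|\cdot\|)$.

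For the second bullet, which is the substantive point, take a cover $K=\bigcup_{i}E_{i}$ with $\sum_{i}\delta_{i}^{n}$ within $\eta$ of $H_{\infty}^{n}(K,\|\cdot\|)$; note $\delta_{i}\leq 1$ since $K\subset[0,1]^{N}$. Since preimages commute with unions, $\pi_{A}^{-1}(\pi_{A}K)=\bigcup_{i}\pi_{A}^{-1}(\pi_{A}E_{i})$, hence
\[\bigcup_{|A|\geq n}\pi_{A}^{-1}(\pi_{A}K)=\bigcup_{i}\bigcup_{|A|\geq n}\pi_{A}^{-1}(\pi_{A}E_{i}),\]
and therefore
\[\nu_{N}\Big(\bigcup_{|A|\geq n}\pi_{A}^{-1}(\pi_{A}K)\Big)\leq\sum_{i}\sum_{|A|\geq n}\nu_{N}\big(\pi_{A}^{-1}(\pi_{A}E_{i})\cap[0,1]^{N}\big).\]
The set $\pi_{A}^{-1}(\pi_{A}E_{i})\cap[0,1]^{N}$ is a cylinder with base $\pi_{A}(E_{i})\subset[0,1]^{A}$ and full $[0,1]$-fibres over the complementary coordinates, so its measure is $\nu_{|A|}(\pi_{A}E_{i})\leq\delta_{i}^{|A|}$. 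Using $\delta_{i}\leq 1$ and $|A|\geq n$ we get $\delta_{i}^{|A|}\leq\delta_{i}^{n}$, so $\sum_{|A|\geq n}\delta_{i}^{|A|}\leq\big(\sum_{k=n}^{N}\binom{N}{k}\big)\delta_{i}^{n}\leq 2^{N}\delta_{i}^{n}$. Summing over $i$ and letting $\eta\to 0$ yields $\nu_{N}\big(\bigcup_{|A|\geq n}\pi_{A}^{-1}(\pi_{A}K)\big)\leq 2^{N}H_{\infty}^{n}(K,\|\cdot\|)\leq 4^{N}H_{\infty}^{n}(K,\|\cdot\|)$.

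The only step requiring care is this combinatorial summation over $\{A:|A|\geq n\}$: one must exploit $\delta_{i}\leq 1$ (valid precisely because $K\subset[0,1]^{N}$) to collapse $\delta_{i}^{|A|}$ to $\delta_{i}^{n}$ before extracting the bound $\sum_{k}\binom{N}{k}\leq 2^{N}$; without the normalization of $K$ into the unit cube the estimate would be false. Everything else is the standard chain ``diameter $\Rightarrow$ enclosing cube $\Rightarrow$ volume,'' now applied to each coordinate projection rather than to $K$ itself.
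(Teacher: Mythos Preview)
Your proof is correct. The paper does not supply its own argument for this lemma; it simply records the statement and attributes it to Lindenstrauss--Tsukamoto \cite{LT}. Your approach---enclose each covering set of $\|\cdot\|$-diameter $\delta$ in a cube of side $\delta$, push this bound through each coordinate projection $\pi_{A}$, and absorb the combinatorics $\sum_{k\ge n}\binom{N}{k}\le 2^{N}$---is exactly the standard one, and in fact yields the sharper constants $1$ and $2^{N}$ in place of the stated $2^{N}$ and $4^{N}$. One small point worth making explicit: the covers in the definition of $H_{\infty}^{s}(K,\|\cdot\|)$ are by subsets $E_{i}\subset K\subset[0,1]^{N}$, which is what guarantees $\delta_{i}\le 1$; you rely on this but phrase it as ``since $K\subset[0,1]^{N}$,'' which is fine once one notes that intersecting an arbitrary cover with $K$ only decreases diameters.
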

The following lemma is the key  ingredient of the  proof of Theorem \ref{bound}.
\begin{lem}\label{lema}\cite{MT}
	Let $(\mathcal{X}, d)$ be a compact metric space with a continuous function $\varphi: \mathcal{X} \rightarrow  \mathbb{R}$. Let $\epsilon>0$, $L>0$ and $s\geq \max_{ \mathcal{X}} \varphi$ be real numbers. Suppose there exists a Lipschitz map $f: \mathcal{X}\rightarrow  [0,1]^{N}$ such that
	\begin{itemize}
		\item$ \left\| f(x)-f(y)\right\| \leq L \cdot d(x,y), $
		\item  $\left\| f(x)-f(y)\right\|=1$ if $d(x,y)\geq \epsilon.$
	\end{itemize}
	Moreover, suppose
	$$ 4^{N}(L+1)^{1+s+\left\| \varphi\right\|_{\infty} } H_{1}^{s}(\mathcal{X}, d, \varphi)<1,$$
	where $\left\| \varphi\right\|_{\infty}=\max_{ \mathcal{X}}\left| \varphi \right|.  $ Then
	$$ {\rm Widim}_{\epsilon}^{'}(\mathcal{X}, d, \varphi)\leq s+1.$$
\end{lem}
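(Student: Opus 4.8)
The plan is to use the definition of ${\rm Widim}_\epsilon^{'}$ directly: for the given $\epsilon$ I must produce a finite simplicial complex $P$ and an $\epsilon$-embedding $\Phi:\mathcal{X}\to P$ with $\max_{x\in\mathcal{X}}\big({\rm dim}^{'}_{\Phi(x)}P+\varphi(x)\big)\le s+1$. I would take $\Phi=g\circ f$, where $g$ is a ``radial push'' of $[0,1]^N$ away from a point $z$ chosen by means of the measure estimate stated immediately before the lemma (the one bounding $\nu_N\big(\bigcup_{|A|\ge n}\pi_A^{-1}(\pi_A K)\big)$ by $4^N H_\infty^n(K,\|\cdot\|)$). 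The expansion property of $f$ will make $\Phi$ an $\epsilon$-embedding, the $L$-Lipschitz bound will control Hausdorff content, and the hypothesis $4^N(L+1)^{1+s+\|\varphi\|_\infty}H_1^s<1$ is exactly what lets the dimension count close up.

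First I would fix a countable cover $\mathcal{X}=\bigcup_i E_i$ with $0<{\rm diam}_d E_i<1$ and $\sum_i({\rm diam}_d E_i)^{\,s-\sup_{E_i}\varphi}<\big(4^N(L+1)^{1+s+\|\varphi\|_\infty}\big)^{-1}$, obtained from a near-optimal cover for $H_1^s$ by a slight enlargement (using continuity of $\varphi$) and, if convenient, made finite by compactness; the $E_i$ may be taken compact. To each $i$ I attach $n_i=\lfloor s-\sup_{E_i}\varphi\rfloor\ge 0$, so that $s-\sup_{E_i}\varphi-1\le n_i\le s-\sup_{E_i}\varphi$. Since $f$ is $L$-Lipschitz, ${\rm diam}_{\|\cdot\|}f(E_i)\le L\,{\rm diam}_d E_i$, so covering $f(E_i)$ by itself gives $H_\infty^{\,n_i+1}(f(E_i),\|\cdot\|)\le(L\,{\rm diam}_d E_i)^{\,n_i+1}\le(L+1)^{1+s+\|\varphi\|_\infty}({\rm diam}_d E_i)^{\,s-\sup_{E_i}\varphi}$, using ${\rm diam}_d E_i<1$, $n_i+1\ge s-\sup_{E_i}\varphi$, and $n_i+1\le s-\sup_{E_i}\varphi+1\le 1+s+\|\varphi\|_\infty$. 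Applying the cited measure estimate with $K=f(E_i)$ and exponent $n_i+1$ (the case $n_i+1>N$ being vacuous) and summing over $i$, the set $B:=\bigcup_i\bigcup_{|A|\ge n_i+1}\pi_A^{-1}(\pi_A f(E_i))$ has $\nu_N(B)<1$. As $\nu_N(\partial[0,1]^N)=0$ I can pick $z\in(0,1)^N\setminus\big(B\cup f(\mathcal{X})\big)$; by construction, for each $i$ and each $w\in f(E_i)$ the points $w$ and $z$ agree in at most $n_i$ coordinates.

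Next, out of $z$ I would build a continuous map $g$ defined near $f(\mathcal{X})$, valued in a triangulation $P$ of the union of the proper faces of the cubical complex cut out of $[0,1]^N$ by the hyperplanes $\{x_j=z_j\}$ together with $\partial[0,1]^N$, obtained by projecting points radially away from $z$ and iterating within successively lower faces. The two features I need are: (i) whenever $g(u)=g(v)$, the points $u,v$ lie in a common fiber of this radial structure, which, $z$ being interior, has $\|\cdot\|$-diameter strictly less than $1$; and (ii) a point agreeing with $z$ in at most $n_i$ coordinates is sent into a simplex of $P$ of dimension at most $n_i+1$. Granting such a $g$ — the combinatorial heart of the matter, carried out as in \cite{LT} and \cite{MT} — set $\Phi=g\circ f:\mathcal{X}\to P$. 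If $\Phi(x)=\Phi(y)$, then by (i) $\|f(x)-f(y)\|<1$, so by the contrapositive of ``$\|f(x)-f(y)\|=1$ when $d(x,y)\ge\epsilon$'' we get $d(x,y)<\epsilon$; since $\Phi^{-1}(p)$ is compact, its diameter is attained and hence $<\epsilon$, so $\Phi$ is an $\epsilon$-embedding. For $x\in E_i$, (ii) yields ${\rm dim}^{'}_{\Phi(x)}P+\varphi(x)\le(n_i+1)+\sup_{E_i}\varphi\le s+1$. Taking the supremum over $x$ and the infimum over admissible pairs $(P,\Phi)$ gives ${\rm Widim}_\epsilon^{'}(\mathcal{X},d,\varphi)\le s+1$.

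The main obstacle is the construction of $g$ in the previous paragraph: producing a \emph{single} continuous map into a complex that at once keeps all fibers of $\|\cdot\|$-diameter below $1$ (so the expansion property of $f$ promotes $g\circ f$ to an $\epsilon$-embedding) and realizes the \emph{per-piece} dimension bound $n_i$ rather than a uniform one — a uniform bound is easy via iterated radial projection, but only the per-piece version allows the potential to enter the estimate, and reconciling the two requires a careful Pontryagin/Gromov-type argument. A subsidiary, purely arithmetic point is the calibration of $n_i$ in the second step, tuned so that the factor $(L+1)^{1+s+\|\varphi\|_\infty}$, and not a larger power of $L+1$, is enough.
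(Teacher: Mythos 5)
Your outer layer is sound and follows exactly the route of the source on which this paper relies (the paper itself gives no proof of Lemma \ref{lema}; it simply quotes it from \cite{MT}): take a near-optimal cover for $H^{s}_{1}(\mathcal{X},d,\varphi)$, calibrate $n_{i}=\lfloor s-\sup_{E_{i}}\varphi\rfloor\ge 0$, estimate $H^{n_{i}+1}_{\infty}(f(E_{i}),\|\cdot\|)\le (L+1)^{1+s+\|\varphi\|_{\infty}}({\rm diam}\,E_{i})^{s-\sup_{E_{i}}\varphi}$, use the displayed measure estimate to find $z\in(0,1)^{N}$ sharing at most $n_{i}$ coordinates with every point of $f(E_{i})$, and then deduce ${\rm Widim}'_{\epsilon}\le s+1$ from the existence of your map $g$; all of that bookkeeping, including the compactness remark upgrading pairwise $d(x,y)<\epsilon$ to fiber diameter $<\epsilon$, is correct and matches \cite{MT}/\cite{LT}.

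The genuine gap is that the map $g$ is never constructed: you ``grant'' it, and it is precisely the content of the lemma, everything else being routine. Moreover, the interface you grant is not a true statement as written. Property (ii) is a universal, $z$-only, pointwise condition (any point of the domain agreeing with $z$ in at most $n_{i}$ coordinates lands in a simplex of dimension $\le n_{i}+1$), and together with (i) it is too strong: if some piece has $n_{i}=0$ and the domain of $g$ contains a unit $2$-face $Q$ of the cube, then the $0$-agreement points are dense in $Q$, so (ii) sends $g(Q)$ into the closed $1$-skeleton of $P$, while (i) (plus upper semicontinuity of fiber diameters) makes $g|_{Q}$ an $\epsilon$-embedding of $Q$ into a $1$-dimensional complex for some $\epsilon<1$, contradicting ${\rm Widim}_{\epsilon}([0,1]^{2},\ell^{\infty})=2$. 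So no ``radial push away from $z$'' depending only on $z$ can do what you ask; the correct intermediate statement bounds ${\rm dim}'_{g(u)}P$ only for $u$ in (carefully chosen neighborhoods of) the individual sets $f(E_{i})$, and producing a single continuous map on all of $[0,1]^{N}$ that achieves these per-piece bounds simultaneously while keeping every fiber of $\|\cdot\|$-diameter $<1$ is the delicate Pontrjagin--Schnirelmann-type construction occupying the technical heart of \cite{LT} and \cite{MT}. (A minor symptom of the same issue: your requirement $z\notin f(\mathcal{X})$ need not be attainable when some $n_{i}\ge N$, since then $f(E_{i})$ need not lie in $B$ and may have positive measure; the genuine construction must be defined at such points rather than avoid them.) So: right strategy and correct arithmetic, but the combinatorial core is missing, and the black box you substitute for it is not the statement that is actually true.
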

\begin{proof}[Proof of Theorem \ref{bound}]
	It is sufficient to show that ${\rm mdim}(\mathcal{X}, T, d, \mathcal{F})\leq {\rm mdim}_{H}(\mathcal{X}, T, d, \mathcal{F}).$ Given $\epsilon>0$, we take a Lipschitz map $f: \mathcal{X} \rightarrow [0,1]^{M}$ such that
	$$ d(x,y)\geq \epsilon \Rightarrow \left\| f(x)-f(y)\right\| =1. $$
	Let $L>0$ be Lipschitz constant of $f$, i.e., $\left\| f(x)- f(y)\right\|\leq L\cdot d(x,y). $ For $N>0$ we define $f_{N}: \mathcal{X} \rightarrow [0,1]^{MN}$ by
	$$f_{N}(x)=(f(x), f(Tx), \cdots, f(T^{N-1}x)).$$	
	Then
	\begin{itemize}
		\item $\left\| f_{N}(x)-f_{N}(y)\right\|\leq L\cdot d_{N}(x,y), $
		\item $\left\|f_{N}(x)-f_{N}(y)=1 \right\| $ if $d_{N}(x,y)\geq \epsilon.$
	\end{itemize}
	Put $s > {\rm mdim}_{H}(\mathcal{X}, T, d, \mathcal{F})$. Let $\tau>0$ be arbitrary.  Take  $0<\delta<1$ such that
	\begin{align}\label{c}
	4^{M}\cdot (L+1)^{1+s+\tau+K+ \left\|  \varphi_{1}\right\|_{\infty}  }\cdot \delta^{\tau}<1.
	\end{align}
	Since ${\rm mdim}_{H}(\mathcal{X}, T, d, \mathcal{F})< s$, we can take $0< N_{1} < N_{2}<  N_{3}<\cdots\rightarrow \infty$ satisfying
	${\rm dim}_{H}(\mathcal{X}, d_{N_{i}}, \varphi_{N_{i}}, \delta)< sN_{i}$. Then $H_{\delta}^{sN_{i}}(\mathcal{X},d_{N_{i}}, \varphi_{N_{i}} )<1$ and hence
	$$ H_{\delta}^{(s+\tau)N_{i}}(\mathcal{X}, d_{N_{i}},  \varphi_{N_{i}})\leq \delta^{\tau N_{i}}H_{\delta}^{sN_{i}}(\mathcal{X}, d_{N_{i}}, \varphi_{N_{i}})<\delta^{\tau N_{i}}.$$
	By $(\ref{c})$, we can
	\begin{align*}
	4^{MN_{i}}(L+1)^{1+(s+\tau)N_{i}+\left\| \varphi_{N_{i}}\right\|_{\infty}}H_{1}^{(s+\tau)N_{i}}(\mathcal{X}, d_{N_{i}}, \varphi_{N_{i}})&< \left\lbrace 4^{M}\cdot (L+1)^{1+s+\tau+K+\left\|  \varphi_{1} \right\|_{\infty}}\cdot \delta^{\tau}  \right\rbrace^{N_{i}}\\&<1.
	\end{align*}
	According to Lemma \ref{lema}, we can have
	$$ {\rm Widim}^{'}_{\epsilon}(\mathcal{X}, d_{N_{i}},   \varphi_{N_{i}} )\leq (s+\tau)N_{i}+1.$$
	Hence
	$$\lim\limits_{N\rightarrow \infty}\dfrac{{\rm Widim}^{'}_{\epsilon}(\mathcal{X}, d_{N},   \varphi_{N} )}{N}\leq s+\tau. $$
	Let $s\rightarrow {\rm mdim}_{H}(\mathcal{X}, T, d, \mathcal{F})$, $\tau \rightarrow 0$ and $ \epsilon\rightarrow 0$:
	$$ \lim\limits_{\epsilon\rightarrow 0}\left( \lim\limits_{N\rightarrow \infty} \dfrac{{\rm Widim}^{'}_{\epsilon}(\mathcal{X}, d_{N},  \varphi_{N} )}{N}\right) \leq {\rm mdim}_{H}(\mathcal{X}, T, d, \mathcal{F}).$$
	By Proposition \ref{proa}, this proves ${\rm mdim}(\mathcal{X}, T, d, \mathcal{F}) \leq {\rm mdim}_{H}(\mathcal{X},T, d, \mathcal{F}).$
	\begin{rem}
		The above proof actually shows ${\rm mdim}(\mathcal{X}, T, d, \mathcal{F}) \leq \underline{{\rm mdim}}_{H}(\mathcal{X},T, d, \mathcal{F})$.  It is worth pointing out that the bound distortion is used in the proof of Proposition \ref{proa}.
	\end{rem}	
\end{proof}
\section{Proof of Theorem \ref{thmc}}\label{b}
In this section, we give a proof of Theorem \ref{thmc}. It states that we can construct invariant probability measures capturing dynamical complexity of $(\mathcal{X}, T, d, \mathcal{F})$. We firstly give some notations and lemmas  which are needed in our proof of Theorem \ref{thmc}.
\begin{defn}
	The compact metric space $(\mathcal{X}, d)$ is said to  have { tame growth of covering numbers} if for every $\delta >0$ it holds that
	$$ \lim\limits_{\epsilon \rightarrow 0} \epsilon^{\delta} \log \# (\mathcal{X}, d, \epsilon)=0.$$
\end{defn}
The following result \cite{LT} shows that the tame growth of covering numbers is a fairly mild condition.
\begin{lem}
	Let $(\mathcal{X},d)$ be a compact metric space.  There exists  a metric $d'$  on $\mathcal{X}$ (compatible with the topology)  such that $d'(x,y) \leq d(x,y)$ and that $(\mathcal{X}, d')$ has the tame growth of covering numbers. In particular every compact metrizable space admits a metric having the tame growth of covering numbers.	
\end{lem}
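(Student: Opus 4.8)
The plan is to build $d'$ not by reparametrizing $d$ but by re\-embedding $\mathcal{X}$ into the Hilbert cube with very rapidly decaying coordinate weights. (It is worth noting why the naive idea $d'=\psi\circ d$ with $\psi$ concave fails: a subadditive $\psi$ with $\psi(0)=0$ cannot decay faster than linearly near $0$, so such a change of metric cannot reduce the covering numbers of a space like $[0,1]^{\mathbb{N}}$.) After replacing $d$ by $\min(d,1)$ — which is $\leq d$ and induces the same topology — we may assume $\operatorname{diam}_d(\mathcal{X})\leq 1$. Fix a countable dense set $\{x_n\}_{n\geq1}\subset\mathcal{X}$ and put $f_n(x)=d(x,x_n)\in[0,1]$; the $f_n$ are $1$\-Lipschitz and separate points of $\mathcal{X}$. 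Choose positive weights with $\sum_{n\geq1}a_n\leq1$ decaying very fast, say $a_n=2^{-2^{n}}$, and set
$$d'(x,y)=\sum_{n=1}^{\infty}a_n\,\lvert f_n(x)-f_n(y)\rvert .$$

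First I would record the structural properties of $d'$. Symmetry and the triangle inequality pass to the sum, and $d'(x,y)=0$ forces $f_n(x)=f_n(y)$ for all $n$, hence $x=y$; so $d'$ is a metric. Since each $f_n$ is $1$\-Lipschitz and $\sum a_n\leq1$, we get $d'\leq d$. For compatibility with the topology, consider $\Phi=(f_n)_{n\geq1}\colon\mathcal{X}\to[0,1]^{\mathbb{N}}$: it is continuous and injective, hence (compactness of $\mathcal{X}$) a homeomorphism onto its image, and $d'$ is the pullback under $\Phi$ of the weighted $\ell^{1}$ metric on $[0,1]^{\mathbb{N}}$, which induces the product topology because $\sum a_n<\infty$. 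Equivalently, one checks by dominated convergence that the identity $(\mathcal{X},d)\to(\mathcal{X},d')$ is a continuous bijection from a compact space to a Hausdorff space, hence a homeomorphism.

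The heart of the proof is the covering\-number estimate. Given $0<\epsilon<1$, let $N=N(\epsilon)$ be the least integer with $\sum_{n>N}a_n\leq\epsilon/2$; for $a_n=2^{-2^{n}}$ the tail is at most $2^{\,1-2^{N+1}}$, so $N(\epsilon)\leq\log_2\log_2(1/\epsilon)+O(1)$. Partition $[0,1]$ into $m=\lceil 2/\epsilon\rceil$ intervals of length $\leq\epsilon/2$ and cover $\mathcal{X}$ by the (at most $m^{N}$) nonempty sets $\bigcap_{n\leq N}f_n^{-1}(I_n)$ with $I_n$ one of these intervals. On such a set $\lvert f_n(x)-f_n(y)\rvert\leq\epsilon/2$ for $n\leq N$, hence $d'(x,y)\leq(\epsilon/2)\sum_{n\leq N}a_n+\sum_{n>N}a_n\leq\epsilon$, so $\#(\mathcal{X},d',\epsilon)\leq\lceil 2/\epsilon\rceil^{N(\epsilon)}$ and
$$\log\#(\mathcal{X},d',\epsilon)\;\leq\;N(\epsilon)\,\log\lceil 2/\epsilon\rceil\;=\;O\!\big(\log\log(1/\epsilon)\cdot\log(1/\epsilon)\big).$$
This bound is $o(\epsilon^{-\delta})$ for every $\delta>0$, so $\lim_{\epsilon\to0}\epsilon^{\delta}\log\#(\mathcal{X},d',\epsilon)=0$ for all $\delta>0$; that is, $(\mathcal{X},d')$ has tame growth of covering numbers. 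The final assertion of the lemma is obtained by applying this construction to any compatible metric on a given compact metrizable space.

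The step requiring the most care is the covering\-number bound: splitting $d'$ into its "first $N$ coordinates" part and its tail, calibrating the grid mesh against $\epsilon$, and — most importantly — choosing the weights $a_n$ so that $N(\epsilon)$ grows only poly\-logarithmically in $1/\epsilon$. Everything else (that $d'$ is a metric, dominates nothing beyond $\min(d,1)$, and induces the right topology) is routine verification, and the only conceptual move is to abandon the "$\psi\circ d$" approach in favour of re\-embedding with decaying weights.
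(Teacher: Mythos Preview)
Your argument is correct; the only cosmetic point is that the covering pieces $\bigcap_{n\le N}f_n^{-1}(I_n)$ should be taken open (use open intervals of length $<\epsilon/2$, or enlarge slightly) to meet the paper's convention ${\rm diam}\,U_i<\epsilon$. The paper itself gives no proof of this lemma, merely citing \cite{LT}; your construction---pulling back a weighted $\ell^{1}$ metric via $1$-Lipschitz distance functions with doubly-exponentially decaying weights, so that at scale $\epsilon$ only $O(\log\log(1/\epsilon))$ coordinates matter and hence $\log\#(\mathcal{X},d',\epsilon)=O\!\big(\log(1/\epsilon)\log\log(1/\epsilon)\big)$---is exactly the mechanism used there.
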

Let $(\mathcal{X}, T)$  be a dynamical system with a metric $d$. For $N\geq 1$,  we introduce the mean metric $\overline{d}_{N}$ on $\mathcal{X}$ as follows:
$$ \overline{d}_{N}(x, y)=\dfrac{1}{N}\sum\limits_{n=0}^{N-1}\limits d(T^{n}x, T^{n}y).$$
Let $\varphi: \mathcal{X} \rightarrow  \mathbb{R}$ be a continuous function. We define the { $L^{1}$-mean Hausdorff dimension with sub-additive potentials } by
$$ {\rm mdim}_{H,L^{1}}(\mathcal{X}, T, d, \mathcal{F})=\lim\limits_{\epsilon\rightarrow 0} \left( \limsup\limits_{N\rightarrow \infty} \dfrac{{\rm dim}_{H}(\mathcal{X}, \overline{d}_{N},  \varphi_{N}, \epsilon)}{N}\right) .$$
Since $\overline{d}_{N}\leq d_{N}$, we always have
$$ {\rm mdim}_{H,L^{1}}(\mathcal{X}, T, d, \mathcal{F})\leq  {\rm mdim}_{H}(\mathcal{X}, T, d, \mathcal{F}). $$
\begin{lem}\label{le2}
	If $(X, d)$ has the tame growth of covering numbers  and  there exists $K>0$ such that
	$|\varphi_{n+1}(x)-\varphi_{n}(x)|\leq K, ~\forall x\in \mathcal{X}~,n\in \mathbb{N}.$ Then
	$$  {\rm mdim}_{H,L^{1}}(\mathcal{X}, T, d, \mathcal{F})= {\rm mdim}_{H}(\mathcal{X}, T, d, \mathcal{F}).$$
\end{lem}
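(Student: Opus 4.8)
## Proof Plan for Lemma \ref{le2}

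The plan is to establish the reverse inequality ${\rm mdim}_{H}(\mathcal{X}, T, d, \mathcal{F}) \leq {\rm mdim}_{H,L^{1}}(\mathcal{X}, T, d, \mathcal{F})$, since the inequality $\le$ in the other direction is already recorded above from $\overline{d}_N \le d_N$. The core difficulty is that $\overline{d}_N$ and $d_N$ are genuinely different metrics, so we must compare covers at scale $\epsilon$ in $\overline{d}_N$ with covers at some (slightly larger) scale in $d_N$. The standard device here — already used in \cite{LT} in the potential-free setting — is to refine a $\overline{d}_N$-small set into pieces that are $d_N$-small by intersecting with sets of the form $\prod_{n=0}^{N-1} T^{-n}(\text{ball of radius } \sqrt{\epsilon})$, paying a multiplicative price controlled by the covering number $\#(\mathcal{X}, d, \sqrt{\epsilon})^{N\sqrt{\epsilon}}$ or a similar sub-exponential factor; tame growth of covering numbers is exactly what makes this factor negligible after dividing by $N$ and letting $\epsilon \to 0$.

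First I would fix $\delta > 0$ small and a scale $\epsilon > 0$, and set an auxiliary scale, say $\epsilon' = \epsilon^{1/2}$ or more carefully $\epsilon' $ chosen so that a $\overline{d}_N$-ball of radius $\epsilon$ meets at most $\#(\mathcal{X}, d, \epsilon')^{c N \epsilon/\epsilon'}$ elements of a minimal $d$-cover at scale $\epsilon'$ in the "bad coordinates" — the point being that if $\overline{d}_N(x,y) < \epsilon$ then $d(T^n x, T^n y) \ge \epsilon'$ for at most $(\epsilon/\epsilon')N$ values of $n$. Given a cover $\mathcal{X} = \bigcup_i E_i$ with ${\rm diam}(E_i, \overline{d}_N) < \epsilon$, I would replace each $E_i$ by its intersections with cylinder sets built from a fixed finite $d$-cover $\{B_1, \dots, B_r\}$ at scale $\epsilon'$ (with $r = \#(\mathcal{X}, d, \epsilon')$), keeping only the combinatorial choices in which $B_{j(n)}$ has $d$-diameter $\ge \epsilon'$ for at most $(\epsilon/\epsilon')N$ indices $n$; each resulting piece has $d_N$-diameter $< \epsilon'$ (since in the remaining coordinates $d(T^n x, T^n y) < \epsilon'$ automatically, and there are few exceptional coordinates we can handle by using a further refinement or by absorbing into the diameter). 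The number of such refinement pieces per $E_i$ is bounded by $\binom{N}{(\epsilon/\epsilon')N} r^{(\epsilon/\epsilon')N}$, which is $\exp(o(N))$ as $\epsilon \to 0$ by tame growth.

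The next step is to control the potential terms. Here is where the hypothesis $|\varphi_{n+1}(x) - \varphi_n(x)| \le K$ enters: it gives $|\varphi_{N}(x) - \varphi_{N}(y)| \le$ a controlled quantity when $x,y$ agree on most coordinates, but more importantly it is used to bound $\sup_{E'}\varphi_N - \sup_{E_i}\varphi_N$ over the refinement pieces $E' \subset E_i$ by something like $2K$ times the number of exceptional coordinates, hence $o(N)$. Tracking these estimates through the definition of $H^s_\epsilon$, I would get $H^{(s+\tau)N}_{\epsilon'}(\mathcal{X}, d_N, \varphi_N) \le (\text{sub-exponential in } N) \cdot H^{sN}_{\epsilon}(\mathcal{X}, \overline{d}_N, \varphi_N)$ for any $\tau > 0$ once $\epsilon$ is small enough, which upon taking $s$ above ${\rm dim}_H(\mathcal{X}, \overline{d}_N, \varphi_N, \epsilon)/N$, dividing by $N$, taking $\limsup_N$, and then $\epsilon \to 0$, yields ${\rm mdim}_H \le {\rm mdim}_{H,L^1}$.

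The main obstacle I anticipate is the bookkeeping in the refinement: making sure that the exceptional-coordinate set can be absorbed simultaneously into both the $d_N$-diameter bound and the potential supremum bound, with all error factors genuinely sub-exponential in $N$ uniformly as $\epsilon \to 0$ (with $\delta$ held fixed, then sent to $0$). The tame growth condition is tailored precisely for the term $r^{(\epsilon/\epsilon')N} = \#(\mathcal{X}, d, \epsilon')^{(\epsilon/\epsilon')N}$ to vanish in the limit — one must choose the relation between $\epsilon$ and $\epsilon'$ (e.g. $\epsilon' = \epsilon^{1/2}$ so that $\epsilon/\epsilon' = \epsilon^{1/2} \to 0$ while $\log \#(\mathcal{X},d,\epsilon')$ grows slower than any power of $1/\epsilon'$) so that this is legitimate, and the $|\varphi_{n+1} - \varphi_n| \le K$ bound guarantees the potential does not blow up faster than linearly so the same scheme controls it.
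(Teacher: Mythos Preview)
Your overall strategy---Markov's inequality on the bad coordinates, followed by refinement at those coordinates using a $d$-cover whose cost is controlled by the tame growth hypothesis---is exactly the one the paper uses. There is, however, a genuine gap in the way you set up the refinement, and a secondary misreading of how the bound $|\varphi_{n+1}-\varphi_n|\le K$ enters.

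\textbf{The uniform threshold $\epsilon'$ does not work.} You fix a single auxiliary scale $\epsilon'$ (e.g.\ $\epsilon^{1/2}$) and argue that each refined piece has $d_N$-diameter $<\epsilon'$. But then the contribution of the refined pieces of a single $E_i$ to $H^{(s+\tau)N}_{\epsilon'}(\mathcal X,d_N,\varphi_N)$ is of order $(\text{count})\cdot(\epsilon')^{(s+\tau)N-\sup_{E_i}\varphi_N}$, and this term has \emph{no} relation to $\tau_i^{\,sN-\sup_{E_i}\varphi_N}$ when $\tau_i:={\rm diam}(E_i,\overline d_N)$ is much smaller than $\epsilon$. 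In particular, a near-optimal $\overline d_N$-cover may well contain infinitely many pieces with $\tau_i\to 0$, and then $\sum_i(\epsilon')^{(s+\tau)N-\sup_{E_i}\varphi_N}$ is not even finite. The paper avoids this by choosing a \emph{piece-dependent} threshold: with $L_i:=(1/\tau_i)^\delta$ one has $|\{k:d(T^kx,T^kx_i)\ge L_i\tau_i\}|\le N/L_i=N\tau_i^\delta$, and after refining on those coordinates each piece has $d_N$-diameter $\le 2L_i\tau_i=2\tau_i^{1-\delta}$. Now $(2\tau_i^{1-\delta})^{s'N-\sup_{E_i}\varphi_N}$ can be compared directly with $\tau_i^{\,sN-\sup_{E_i}\varphi_N}$ after the exponent adjustment $s'N=sN+2\delta(sN-\min_\mathcal X\varphi_N)+\delta N$. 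Your claimed inequality $H^{(s+\tau)N}_{\epsilon'}\le(\text{sub-exp})\cdot H^{sN}_\epsilon$ is therefore not justified as written; the fix is to let the threshold adapt to each $E_i$.

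\textbf{The role of the $K$ condition.} You propose using $|\varphi_{n+1}-\varphi_n|\le K$ to bound $\sup_{E'}\varphi_N-\sup_{E_i}\varphi_N$ for a refinement piece $E'\subset E_i$; but that quantity is $\le 0$ trivially, and since the new diameters are $<1$ this already works in your favour. The hypothesis is not used there. Its actual role (and the paper's use of it) is the a priori bound $\|\varphi_N\|_\infty\le(K+\|\varphi_1\|_\infty)N$, so that the adjusted exponent $sN+2\delta(sN-\min_\mathcal X\varphi_N)+\delta N$ and the prefactor $2^{sN-\sup_{E_i}\varphi_N+2\delta(sN-\min_\mathcal X\varphi_N)}$ both grow only linearly in $N$; combined with the tame growth bound $M(\tau)^{\tau^\delta}<2$ this keeps every multiplicative error of the form $C^N$ with $C<1$, hence summable.
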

\begin{proof}
	It is enough to prove ${\rm mdim}_{H}(\mathcal{X}, T, d, \mathcal{F})\leq {\rm mdim}_{H,L^{1}}(\mathcal{X}, T, d, \mathcal{F})$. We use  the notation $[N]:=\left\lbrace 0,1,2, \cdots, N-1\right\rbrace $ and $d_{A}(x, y):=\max_{a\in A}d(T^{a}x, T^{a}y)$ for $A\subset [N]$.
	
	Let $0<\delta<1/2$ and $s>{\rm mdim}_{H,L^{1}}(\mathcal{X}, T, d, \mathcal{F})$ be arbitrary. For each $\tau>0$ we choose an open cover $\mathcal{X}=W_{1}^{\tau}\cup\cdots W_{M(\tau)}^{\tau}$ with ${\rm diam}(W_{i}^{\tau}, d)< \tau$ and $M(\tau)=\#(\mathcal{X}, d, \tau)$. From the tame growth condition, we can find  $0<\epsilon_{0}<1$ such that
	\begin{align}
	M(\tau)^{\tau^{\delta}}<2~~(\forall ~0<\tau<\epsilon_{0}),
	\end{align}
	\begin{align}\label{ieq3}
	2^{2+\delta+(1+2\delta)(s+K+\left\|  \varphi_{1}\right\|_{\infty} )}\cdot \epsilon_{0}^{\delta(1-\delta)}<1.
	\end{align}
	Let $0<\epsilon<\epsilon_{0}$ be a sufficiently small number, and let $N$ be a sufficiently large natural number. Since ${\rm mdim}_{H,L^{1}}(\mathcal{X}, T, d, \mathcal{F})<s$, there exists a covering $\mathcal{X}=\bigcup\limits_{n=1}^{\infty}E_{n}$ with  $\tau_{n}:={\rm diam}(E_{n}, \overline{d}_{N})<\epsilon$ satisfying 	 \begin{align}\label{ieq2}
	\sum\limits_{ i=1}^{\infty} \tau_{n}^{sN-\sup_{E_n} \varphi_{N}}<1, ~~(sN \geq \max\limits_{\mathcal{X}} \varphi_{N}).
	\end{align}
	Set $L_{n}=(1/\tau_{n})^{\delta}$ and pick a point $x_{n} \in E_{n}$ for each  $n$. Then every $x\in E_{n}$ satisfies $\overline{d}_{N}(x, x_{n})<\tau_{n}$ and hence
	$$ \left| \left\lbrace k\in[N]| d(T^{k}x, T^{k}y)\geq L_{n}\tau_{n} \right\rbrace  \right| \leq \dfrac{N}{L_{n}}. $$
	So there exists $A\subset [N]$ (depending on $x\in E_{n}$) such that $\left| A \right| \leq N/L_{n} $ and $d_{[N]\setminus A}(x, x_{n})< L_{n}\tau_{n}.$ Thus
	$$E_{n} \subset \bigcup\limits_{ A\subset [N], \left| A \right|\leq N/L_{n} } B_{L_{n}\tau_{n}}^{\circ}(x_{n}, d_{[N]\setminus A}),$$ where $B_{L_{n}\tau_{n}}(x_{n}, d_{[N]\setminus A})$ is the open ball of radius $L_{n} \tau_{n}$ around $x_{n}$ with respect to  the metric $d_{[N]\setminus A}$.
	
	Let $A=\left\lbrace a_{1}, \cdots, a_{r} \right\rbrace $. We consider a decomposition
	$$B_{L_{n}\tau_{n}}^{\circ}(x_{n}, d_{[N]\ A})=\bigcup\limits_{1\leq i_{1}, \cdots, i_{r}\leq M(\tau_{n})} B_{L_{n}\tau_{n}}^{\circ}(x_{n}, d_{[N]\setminus A})\cap T^{-a_{1}} W_{i_{1}}^{\tau_{n}} \cap \cdots \cap T^{-a_{r}} W_{i_{r}}^{\tau_{n}}. $$
	Then $\mathcal{X}$ is covered by the sets
	\begin{align}\label{eq3}
	E_{n}\cap B_{L_{n}\tau_{n}}^{\circ} (x_{n}, d_{N \setminus A}) \cap T^{-a_{1}} W_{i_{1}}^{\tau_{n}}\cap \cdots \cap T^{-a_{r}} W_{i_{r}}^{\tau_{n}},
	\end{align}
	where $n\geq 1, A=\left\lbrace a_{1}, \cdots, a_{r} \right\rbrace \subset [N] $ with $r\leq N/ L_{n}$ and $1\leq i_{1},\cdots, i_{r}\leq M(\tau_{n})$.	  The sets $(\ref{eq3})$ have diameter less than or equal to $2L_{n}\tau_{n}=2\tau_{n}^{1-\delta}<2\epsilon^{1-\delta}$ with respect to  the metric $d_{N}$.
	Set  $m_{N}=\min\limits_{\mathcal{X}}  \varphi_{N}$. We estimate the quantity
	$$ H_{2\epsilon^{1-\delta}}^{sN+2\delta(sN- m_{N})+\delta N}(\mathcal{X}, d_{N},  \varphi_{N}).$$	
	This is bounded by
	$$ \sum\limits_{n=1}^{\infty} 2^{N}\cdot M(\tau_{n})^{N/ L_{n}}\cdot (2\tau_{n}^{1-\delta})^{sN+2\delta(sN- m_{N})+\delta N-\sup_{E_n} \varphi_{N}}.$$
	The factor $2^{N}$ comes from the choice of $A\subset [N]$. Since $\tau_{n} < \epsilon< \epsilon_{0}$
	\begin{align*}
	(2 \tau_{n}^{1-\delta})^{s N+2\delta(sN- m_{N})+\delta N-\sup_{E_n}  \varphi_{N}}&=(2 \tau_{n}^{1-\delta})^{sN+2\delta(sN-m_{N})-\sup_{E_n}  \varphi_{N}}\cdot(2\tau_{n}^{1-\delta})^{\delta N}\\&\leq(2 \tau_{n}^{1-\delta})^{sN+2\delta(sN-m_{N})-\sup_{E_n}  \varphi_{N}}\cdot (2^{\delta}\epsilon_{0}^{\delta(1-\delta)})^{N}.
	\end{align*}
	The term $(2 \tau_{n}^{1-\delta})^{sN+2\delta(sN-m_{N})-\sup_{E_n}  \varphi_{N}}$ is equal to
	$$\underbrace{2^{sN+2\delta(sN-m_{N})-\sup_{E_{n}} \varphi_{N}}}_{I}\cdot\underbrace{\tau_{n}^{2\delta(sN- m_{N})-\delta\left\lbrace sN+2\delta(sN- m_{N}) -\sup_{E_{n}}\varphi_{N}\right\rbrace}}_{II}\cdot\tau_{n}^{sN-\sup_{E_{n}}\varphi_{N}}.$$
	The factor $(I)$ is bounded by
	$$ 2^{sN+2\delta(sN+KN+\left\| \varphi_{1} \right\|_{\infty}N )+KN+\left\| \varphi_{1}\right\|_{\infty}N }=2^{(1+2\delta)(s+\left\| \varphi_{1} \right\|_{\infty}+K )N}.$$
	The exponent of the factor $(II)$ is bounded from below (note $0<\tau_{n}<1$) by
	$$ 2\delta (sN- m_{N})-\delta\left\lbrace sN+2\delta(sN- m_{N}) -m_{N}\right\rbrace=\delta(1-2\delta)(sN- m_{N})\geq 0.$$
	Here we have used $sN\geq \max_{ \mathcal{X}}\varphi_{N}\geq m_{N}$. Hence the factor $(II)$ is less than or equal to 1. Summing up the above  estimates, we get
	$$ (2 \tau_{n}^{1-\delta})^{s N+2\delta(sN- m_{N})+\delta N-\sup_{E_n}  \varphi_{N}}\leq 2^{(1+2\delta)(s+K+\left\| \varphi_{1} \right\|_{\infty} )N}\cdot(2^{\delta}\epsilon_{0}^{\delta(1-\delta)})^{N}\cdot\tau_{n}^{sN-\sup_{E_{n}} \varphi_{N}}.$$
	Thus
	\begin{align*}
	&H_{2\epsilon^{1-\delta}}^{sN+2\delta(sN- m_{N})+\delta N}(\mathcal{X}, d_{N},  \varphi_{N}) \\&\leq \sum\limits_{n=1}^{\infty}\left\lbrace 2^{1+(1+2\delta)(s+\left\| \varphi_{1} \right\|_{\infty} )}\cdot M(\tau_{n})^{1/ L_{n}}\cdot (2^{\delta}\epsilon_{0}^{\delta(1-\delta)}) \right\rbrace^{N}\cdot\tau_{n}^{sN-\sup_{E_{n}} \varphi_{N}}
	\\&\leq  \sum\limits_{n=1}^{\infty}\left\lbrace 2^{2+\delta +(1+2\delta)(s+\left\| \varphi_{1} \right\|_{\infty} )}\cdot  (\epsilon_{0}^{\delta(1-\delta)}) \right\rbrace^{N}\cdot\tau_{n}^{sN-\sup_{E_{n}} \varphi_{N}}
	\\&\leq \sum\limits_{n=1}^{\infty}\tau_{n}^{sN-\sup_{E_{n}} \varphi_{N}}~~(by~(\ref{ieq3}))\\&<1~~({by}~(\ref{ieq2}))
	\end{align*}
	Therefore
	\begin{align*}
	{\rm dim}_{H}(\mathcal{X},d_{N},  \varphi_{N}, 2\epsilon^{1-\delta})&\leq sN+2\delta(sN- m_{N})+\delta N\\&\leq sN+2\delta(sN+KN+\left\| \varphi_{1} \right\|_{\infty}N )+\delta N.
	\end{align*}
	Divide this by $N$. Let $N\rightarrow \infty$ and $\epsilon\rightarrow 0$:
	$${\rm mdim}_{H}(\mathcal{X}, T, d, \mathcal{F})\leq s+2\delta(s+K+\left\| \varphi_{1} \right\|_{\infty} )+\delta.$$
	Let $\delta\rightarrow 0$ and $s\rightarrow {\rm mdim}_{H, L^{1}}(\mathcal{X}, T, d, \mathcal{F}):$
	$$ {\rm mdim}_{H}(\mathcal{X}, T, d, \mathcal{F})\leq {\rm mdim}_{H, L^{1}}(\mathcal{X}, T, d, \mathcal{F}).$$
\end{proof}
Let $(X, d)$  be a compact metric space. For $\epsilon>0$ and $s\geq 0$ we set $H_{\epsilon}^{s}(\mathcal{X}, d)=H_{\epsilon}^{s}(\mathcal{X},d,0).$ Namely
$$ H_{\epsilon}^{s}(\mathcal{X},d)=\inf\left\lbrace \sum\limits_{i=1}^{\infty}({\rm diam}E_{i})^{s}| \mathcal{X}=\bigcup\limits_{i=1}^{\infty} E_{i}~ \text{with}~ {\rm diam} E_{i}< \epsilon~\forall ~i\geq 1\right\rbrace. $$
We define ${\rm dim}_{H}(\mathcal{X}, d, \epsilon)$ as the supremum of $s\geq 0$ satisfying $H_{\epsilon}^{s}(\mathcal{X}, d)\geq 1$.
\begin{lem}\cite{LT}\label{lem3}
	Let $0<c<1$. There exists $0<\delta_{0}<1$ depending only on $c$ and satisfying the following statement. For any compact metric space $(\mathcal{X}, d)$ and $
	0<\delta<\delta_{0}(c)$ there exists a Borel probability measure $\nu$ on $\mathcal{X}$ such that
	$$ \nu(E)\leq ({\rm diam}E)^{c\cdot dim_{H}(\mathcal{X}, d, \delta)} ~~\text{for all} ~E\subset \mathcal{X}~with ~{\rm diam}E<\dfrac{\delta}{6}. $$
\end{lem}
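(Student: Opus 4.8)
The plan is purely geometric: reduce to an effective inequality with a slightly lossy exponent, build a laminar family of sets at geometrically decaying scales in which every small set is trapped inside a single member, and extract $\nu$ from a maximum flow whose minimum cut is bounded below by $H^{s'}_{\delta}(\mathcal X,d)$.

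\emph{Step 1: reduction.} Put $s:=\dim_{H}(\mathcal X,d,\delta)$. If $s=0$ the asserted bound reads $\nu(E)\le({\rm diam}\,E)^{0}=1$ and holds for any probability measure, so assume $s>0$. Since $H^{t}_{\delta}(\mathcal X,d)$ is nonincreasing in $t$ and $s$ is the supremum of the $t$ with $H^{t}_{\delta}\ge1$, the exponent $s':=\tfrac{1+c}{2}s$ satisfies $cs<s'<s$ and $H^{s'}_{\delta}(\mathcal X,d)\ge1$. I will produce a Borel probability measure $\nu$ and an absolute constant $A$ (arising only from the numerical parameters fixed in Step 2; one may take $A=10$) such that
\[
\nu(E)\ \le\ A^{s'}\,({\rm diam}\,E)^{s'}\qquad\text{whenever }{\rm diam}\,E<\delta/6 .
\]
Set $\delta_{0}(c):=6\,A^{-(1+c)/(1-c)}$, which lies in $(0,1)$. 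If $\delta<\delta_{0}(c)$ and $0<{\rm diam}\,E<\delta/6$ then $cs-s'=-\tfrac{1-c}{2}s<0$, so $({\rm diam}\,E)^{cs-s'}\ge(\delta/6)^{cs-s'}=(6/\delta)^{(1-c)s/2}\ge A^{(1+c)s/2}=A^{s'}$, the middle step being the inequality $6/\delta\ge A^{(1+c)/(1-c)}$; hence $\nu(E)\le A^{s'}({\rm diam}\,E)^{s'}\le({\rm diam}\,E)^{cs}$, and the case ${\rm diam}\,E=0$ follows because the bound forces $\nu(\{x\})\le\lim_{r\to0}\nu(B(x,r))=0$. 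Note that the exponent $s'$ is chosen strictly between $cs$ and $s$ precisely so that the loss $s'-cs=\tfrac{1-c}{2}s$ is a fixed fraction of $s$, which makes $\delta_{0}$ depend only on $c$.

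\emph{Step 2: a trapping hierarchy.} Fix $\delta_{k}:=\delta\,3^{-k}$ and parameters $\beta=\tfrac25,\ \gamma=\tfrac35$ (so $\beta+\gamma=1$ and $\gamma\delta_{1}=\tfrac15\delta>\delta/6$). Recursively build covers $\mathcal U_{0}=\{\mathcal X\}\succ\mathcal U_{1}\succ\cdots$: the children of $Q\in\mathcal U_{k}$ are the sets $B(y_{i},\delta_{k+1})\cap Q$, where $\{y_{1},\dots,y_{m}\}$ is a maximal $\beta\delta_{k+1}$-separated subset of $Q$ (so the children cover $Q$). For $k\ge1$ every cell of $\mathcal U_{k}$ has diameter $<2\delta_{k}<\delta$, and the cells form a rooted tree under the parent/child relation. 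The key feature is the \emph{trapping property}: if ${\rm diam}\,E<\gamma\delta_{k}$ then $E$ lies in a single cell of $\mathcal U_{k}$; indeed, descending from $p\in E$, once $E\subset Q\in\mathcal U_{j}$ ($j\le k$) the child through the center $y_{i}$ with $d(p,y_{i})<\beta\delta_{j+1}$ contains all of $E$, since for $q\in E$ one has $d(q,y_{i})\le{\rm diam}\,E+\beta\delta_{j+1}<(\gamma+\beta)\delta_{j+1}=\delta_{j+1}$. Consequently any $E$ with ${\rm diam}\,E<\delta/6<\gamma\delta_{1}$ is trapped at some level $k\ge1$ in a cell $Q$ with ${\rm diam}\,Q<2\delta_{k}\le10\,{\rm diam}\,E$ (using ${\rm diam}\,E\ge\gamma\delta_{k+1}=\tfrac\gamma3\delta_{k}$), which is what gives $A=10$.

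\emph{Step 3: the flow, the measure, and the crux.} For each truncation level $K$ turn the tree of cells of level $\le K$ into a capacitated network: split each cell $v$ into $v^{-}\!\to v^{+}$ with capacity $({\rm diam}\,v)^{s'}$, make all tree edges infinite, take the root as source, and join a sink to every level-$K$ cell. Any cut of finite capacity is a family $C$ of cells meeting every root-to-leaf path; since every $x\in\mathcal X$ lies in a nested chain of cells that forms such a path, $C$ covers $\mathcal X$ by sets of diameter $<\delta$, so its capacity is $\ge H^{s'}_{\delta}(\mathcal X,d)\ge1$. By max-flow/min-cut there is a flow of value $\ge1$, and in a tree the flow through a cell $v$ equals the mass reaching the leaves below $v$, hence is $\le({\rm diam}\,v)^{s'}$. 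This flow determines a finitely supported measure on $\mathcal X$ of total mass $\ge1$; normalising and passing to a weak-$*$ limit as $K\to\infty$ yields a Borel probability measure $\nu$, and for $E$ trapped at level $k$ in a cell $Q$ one then reads off $\nu(E)\le\nu(Q)\le({\rm diam}\,Q)^{s'}\le10^{s'}({\rm diam}\,E)^{s'}$, as required in Step 1. The main obstacle is making this last passage rigorous: the cells of a fixed $\mathcal U_{k}$ overlap, so a point lies in several same-level cells and a naive push-forward of the flow over-counts the mass assigned to $Q$. One must therefore organise the construction — via the path decomposition of the flow and a consistent assignment of each atom of the approximating measures to a single root-to-leaf chain — so that $\nu(Q)\le({\rm diam}\,Q)^{s'}$ holds for \emph{every} hierarchy cell $Q$. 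It is this bookkeeping, together with the trapping property (the role of the constant $6$) and the scale calibration that lets the exponent loss $s\mapsto cs$ absorb the geometric constant (the role of $\delta_{0}(c)$), where the care of the proof is concentrated.
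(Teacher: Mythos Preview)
The paper does not prove this lemma at all: it is quoted from \cite{LT} and no argument is given here.  The proof in \cite{LT} follows Howroyd's method \cite{How95}: one works with the \emph{weighted} Hausdorff content
\[
\Lambda^{s}_{\delta}(f)=\inf\Bigl\{\sum_i c_i(\mathrm{diam}\,E_i)^{s}\ :\ f\le \sum_i c_i\mathbf 1_{E_i},\ c_i\ge 0,\ \mathrm{diam}\,E_i<\delta\Bigr\},
\]
observes that $\Lambda^{s}_{\delta}$ is sublinear on $C(\mathcal X)$, and extracts the desired measure from a dominated linear functional via Hahn--Banach (or an equivalent min--max argument).  No hierarchy of cells is built; the constant $6$ enters only through an elementary comparison of $\Lambda^{s}_{\delta}$ with the unweighted content.

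Your route is genuinely different --- Steps~1 and~2 are fine, and the idea of trading the exponent $s$ down to $s'=\tfrac{1+c}{2}s$ so that the geometric constant $A$ can be absorbed into $\delta_0(c)$ is exactly right.  The difficulty is the one you yourself flag at the end of Step~3, and it is not just bookkeeping.  Your cells at a fixed level \emph{overlap}: a point $x$ lying in a leaf $v$ has a unique tree ancestor $Q_k(v)$ at level $k$, but $x$ may also lie in other level-$k$ cells $Q\neq Q_k(v)$.  Hence the push-forward mass $\mu_K(Q)=\sum_{v:\,x_v\in Q}\mathrm{flow}(v)$ can strictly exceed $\mathrm{flow}(Q)$, and the inequality $\nu(Q)\le(\mathrm{diam}\,Q)^{s'}$ you need simply does not follow.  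Bounding $\nu(E)$ instead by $\sum_{Q\in\mathcal U_k:\,Q\cap E\neq\emptyset}\mathrm{flow}(Q)$ does not help either: in a general compact metric space (no doubling is assumed, and in the application the metric is $\bar d_{n_k}$ on a high-dimensional set) the number of level-$k$ cells meeting a set of diameter $\gamma\delta_k$ can be arbitrarily large, because centres coming from different level-$(k-1)$ parents carry no mutual separation.  Disjointing the cells (e.g.\ by a Voronoi or greedy rule) restores the flow bound but destroys the trapping property that a small set sits inside a single cell --- the two properties are in genuine tension.

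So as written the argument has a real gap.  Two standard repairs: (i) abandon the tree and run Howroyd's Hahn--Banach argument on the weighted content, which is what \cite{LT} does; or (ii) keep the max-flow/min-cut scheme but first build \emph{disjoint} dyadic-type cubes \`a la Christ (or the randomised Hyt\"onen--Kairema variant), which are disjoint at each level \emph{and} satisfy a small-set containment property --- this is a nontrivial construction in its own right.
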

\begin{lem}\cite{Cao}\label{lem2}
	Suppose $\left\lbrace \nu_{n} \right\rbrace_{n}^{\infty} $ is  a sequence in $\mathcal{M}(\mathcal{X})$, where $\mathcal{M}(\mathcal{X})$ denotes the space of all Borel probability measures on $\mathcal{X}$ with the $weak^{*}$ topology. We form the new sequence $\left\lbrace \mu_{n}\right\rbrace_{n=1}^{\infty} $ by $\mu_{n}=\dfrac{1}{n}\sum_{i=0}^{n-1}\nu_{n}\circ T^{-i}.$ Assume that $\mu_{n_{i}}$ converges to $\mu$ in $\mathcal{X}$ for some subsequence $\left\lbrace n_{i} \right\rbrace $ of natural numbers. Then $\mu\in \mathcal{M}(\mathcal{X}, T)$, and moreover
	$$ \limsup\limits_{i\rightarrow \infty}\dfrac{1}{n_{i}}\int \log f_{n_{i}} d \nu_{n_{i}}\leq \mathcal{F}_{*}(\mu).$$
\end{lem}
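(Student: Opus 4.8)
The plan is to prove the two assertions separately: that the weak$^{*}$ limit $\mu$ is $T$-invariant, and then the sub-additive comparison inequality. The invariance is the usual Krylov--Bogolyubov argument: for any $g\in C(\mathcal{X})$ one has $\int g\circ T\,d\mu_n-\int g\,d\mu_n=\frac1n\bigl(\int g\circ T^{n}\,d\nu_n-\int g\,d\nu_n\bigr)$, whose absolute value is at most $2\|g\|_\infty/n$; passing to the subsequence $n=n_i\to\infty$ and using $\mu_{n_i}\to\mu$ weak$^{*}$ gives $\int g\circ T\,d\mu=\int g\,d\mu$ for every continuous $g$, hence $\mu\in\mathcal{M}(\mathcal{X},T)$.

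For the comparison inequality I would first invoke Fekete's subadditive lemma, which gives $\mathcal{F}_*(\mu)=\inf_{m\ge1}\frac1m\int\varphi_m\,d\mu$, so it suffices to show, for each fixed $m$, that $\limsup_i\frac1{n_i}\int\varphi_{n_i}\,d\nu_{n_i}\le\frac1m\int\varphi_m\,d\mu$. Fix $m$ and let $n>2m$. For each residue $k\in\{0,1,\dots,m-1\}$ set $a_k=\lfloor(n-k)/m\rfloor$ and iterate the sub-additivity inequality $\varphi_{p+q}\le\varphi_p+\varphi_q\circ T^{p}$ along the progression $k,\,k+m,\,\dots,\,k+(a_k-1)m$, obtaining
$$\varphi_n(x)\le\varphi_k(x)+\sum_{j=0}^{a_k-1}\varphi_m\bigl(T^{k+jm}x\bigr)+\varphi_{\,n-k-a_km}\bigl(T^{k+a_km}x\bigr),$$
where the first and last summands involve only $\varphi_r$ with $0\le r<m$ (with the convention $\varphi_0\equiv 0$). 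Summing over $k=0,\dots,m-1$, the indices $k+jm$ are pairwise distinct and cover all but at most $2m$ of $\{0,\dots,n-1\}$; since each of $\varphi_0,\dots,\varphi_m$ is continuous on the compact space $\mathcal{X}$, hence bounded by $C_m:=\max_{0\le r\le m}\|\varphi_r\|_\infty$, this yields $m\,\varphi_n(x)\le\sum_{i=0}^{n-1}\varphi_m\bigl(T^{i}x\bigr)+4m\,C_m$. Dividing by $mn$, integrating against $\nu_n$, and using the identity $\frac1n\sum_{i=0}^{n-1}\int\varphi_m\circ T^{i}\,d\nu_n=\int\varphi_m\,d\mu_n$, we get $\frac1n\int\varphi_n\,d\nu_n\le\frac1m\int\varphi_m\,d\mu_n+\frac{4C_m}{n}$. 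Letting $n=n_i\to\infty$ kills the error term, while $\int\varphi_m\,d\mu_{n_i}\to\int\varphi_m\,d\mu$ because $\varphi_m\in C(\mathcal{X})$ and $\mu_{n_i}\to\mu$ weak$^{*}$; this is the bound for fixed $m$, and $\inf_m$ finishes it.

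The hard part is the bookkeeping in the sub-additive step: iterating $\varphi_{p+q}\le\varphi_p+\varphi_q\circ T^{p}$ correctly along each residue class modulo $m$, checking that the index sets $\{k+jm\}$ tile all but $O(m)$ of $\{0,\dots,n-1\}$, and collecting every leftover and boundary contribution into a single error of size $O(m)$ that is uniform in $n$, so that it disappears after division by $n$. A secondary point, handled automatically, is that $\varphi_m$ may be negative and $\mathcal{F}_*(\mu)$ may equal $-\infty$: for fixed $m$ we use only two-sided boundedness of the finitely many functions $\varphi_0,\dots,\varphi_m$, and then the infimum over $m$ pushes the right-hand side down to $\mathcal{F}_*(\mu)$ whether or not it is finite.
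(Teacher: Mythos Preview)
Your argument is correct and is exactly the standard proof from Cao--Feng--Huang \cite{Cao}, which the paper merely cites without reproducing. Note that the displayed conclusion in the paper's statement contains a typographical slip (``$\log f_{n_i}$'' should read ``$\varphi_{n_i}$'', a remnant of the notation in \cite{Cao}); you have interpreted it correctly.
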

\begin{lem}\label{lem}
	Let $A$ be a finite set. Suppose that probability measures $\mu_{n}$ on $A$ converge to some $\mu$ in the $weak^{*}$ topology. Then there exist probability measures $\pi_{n} (n\geq 1)$ on $A\times A$ such that
	\begin{itemize}
		\item $\pi_{n}$ is a coupling between $\mu_{n}$ and $\mu$. Namely the first and second marginals of $\pi_{n}$ are given by $\mu_{n}$ and $\mu$ respectively.
		\item $\pi_{n}$ converge to $(id\times id)_{*}\mu$ in the $weak^{*}$ topology. Namely
		\begin{equation*}
		\pi_{n}(a, b)\rightarrow
		\begin{cases}0, ~~~~~\text{if}~ (a\neq b), \\[5pt] \mu(a), ~~~\text{if}~ (a=b).\\
		\end{cases}
		\end{equation*}
	\end{itemize}
\end{lem}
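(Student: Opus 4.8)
The plan is to construct the couplings $\pi_n$ by hand, exploiting the fact that $A$ is finite so that weak$^*$ convergence $\mu_n \to \mu$ is just coordinate-wise convergence $\mu_n(a) \to \mu(a)$ for each $a \in A$. The natural candidate is to put as much mass as possible on the diagonal: for each $a \in A$ set $\pi_n(a,a) = \min(\mu_n(a), \mu(a))$, and then distribute the remaining mass $\mu_n(a) - \min(\mu_n(a),\mu(a)) = (\mu_n(a)-\mu(a))^+$ in the first coordinate, and $\mu(b) - \min(\mu_n(b),\mu(b)) = (\mu(b)-\mu_n(b))^+$ in the second coordinate, arbitrarily over the off-diagonal entries (say proportionally, or via any fixed rule) so that the first marginal is $\mu_n$ and the second marginal is $\mu$. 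One checks that the total ``excess'' mass $\sum_a (\mu_n(a)-\mu(a))^+$ equals $\sum_b (\mu(b)-\mu_n(b))^+$ because both $\mu_n$ and $\mu$ are probability measures, so such a redistribution is always possible; this is just the elementary construction of a coupling realizing (a bound related to) the total variation distance.

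The steps, in order, would be: (1) define $\pi_n$ on the diagonal by $\pi_n(a,a) = \min(\mu_n(a),\mu(a))$; (2) observe $\sum_{a}(\mu_n(a)-\mu(a))^+ = \sum_{b}(\mu(b)-\mu_n(b))^+ =: \varepsilon_n$, and define the off-diagonal part of $\pi_n$ to be any probability-type assignment with row-sum deficits $(\mu_n(a)-\mu(a))^+$ and column-sum deficits $(\mu(b)-\mu_n(b))^+$ (e.g. the product of the normalized deficit vectors scaled by $\varepsilon_n$, with the convention that it is $0$ when $\varepsilon_n = 0$); (3) verify directly that the first marginal of $\pi_n$ is $\mu_n$ and the second is $\mu$; (4) take $n \to \infty$: since $A$ is finite and $\mu_n(a) \to \mu(a)$ for every $a$, we have $\varepsilon_n \to 0$, hence $\pi_n(a,b) \to 0$ for $a \neq b$ (the off-diagonal mass is at most $\varepsilon_n$) and $\pi_n(a,a) = \min(\mu_n(a),\mu(a)) \to \mu(a)$, which is exactly convergence to $(\mathrm{id}\times\mathrm{id})_*\mu$ in the weak$^*$ topology on the finite set $A\times A$.

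There is essentially no serious obstacle here; the only point requiring a line of care is verifying that the marginals come out exactly right, i.e. that the diagonal mass plus the redistributed off-diagonal mass in row $a$ sums to $\mu_n(a)$ — this follows because $\min(\mu_n(a),\mu(a)) + (\mu_n(a)-\mu(a))^+ = \mu_n(a)$, and symmetrically for columns. The convergence statement is then immediate from finiteness of $A$. I would present the construction explicitly rather than invoke an abstract coupling/Strassen-type result, since the finite setting makes the direct argument shortest and self-contained, and it is this explicit control of $\pi_n(a,b)$ for $a\neq b$ being bounded by $\varepsilon_n\to 0$ that will be used downstream (presumably together with Lemma \ref{le1} on continuity of mutual information) in the construction of the invariant measure via the dynamical Frostman lemma.
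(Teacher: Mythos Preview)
Your construction is correct: it is the standard \emph{maximal coupling} (put $\min(\mu_n(a),\mu(a))$ on the diagonal, then distribute the residual mass as the scaled product of the positive/negative parts of $\mu_n-\mu$), and the marginal and convergence checks you outline go through exactly as you say. The paper itself does not supply a proof of this lemma --- it is simply stated (it is implicitly borrowed from \cite{LT}, where the analogous fact is used) --- so there is no paper argument to compare against; your explicit, self-contained proof is an appropriate way to fill the gap and is precisely what is needed downstream in combination with Lemma~\ref{le1}.
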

\begin{theorem}\label{thmc}
	Assume that $\overline{{\rm mdim_{M}}}(\mathcal{X}, T, d)<\infty$ for all $d\in \mathcal{D}(X)$ and there exists $K>0$ such that
	$|\varphi_{n+1}(x)-\varphi_{n}(x)|\leq K, ~\forall x\in \mathcal{X}~,n\in \mathbb{N}.$	Under a mild condition on $d$ (called tame growth  of covering numbers)
	$$ {\rm mdim}_{H}(\mathcal{X}, T, d, \mathcal{F}) \leq \sup\limits_{\mu \in  \mathcal{M}(\mathcal{X}, T)}(\underline{{\rm rdim}}(\mathcal{X}, T, d, \mu)+ \mathcal{F}_{*}(\mu)).$$	
\end{theorem}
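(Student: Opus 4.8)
The plan is to follow the strategy of Lindenstrauss--Tsukamoto \cite{LT} and Tsukamoto \cite{MT}, adapting it to the sub-additive setting via the tools already assembled above. By Lemma \ref{le2} it suffices to bound $\mathrm{mdim}_{H,L^1}(\mathcal{X},T,d,\mathcal{F})$ from above by $\sup_{\mu}(\underline{\mathrm{rdim}}(\mathcal{X},T,d,\mu)+\mathcal{F}_*(\mu))$, since the tame growth hypothesis lets us pass freely between the $d_N$-version and the $\overline d_N$-version of mean Hausdorff dimension with potential. So fix $s < \mathrm{mdim}_{H,L^1}(\mathcal{X},T,d,\mathcal{F})$; I want to produce an invariant measure $\mu$ with $\underline{\mathrm{rdim}}(\mathcal{X},T,d,\mu)+\mathcal{F}_*(\mu)\ge s$.

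First I would fix a constant $0<c<1$ and a small $\delta>0$, and choose a sequence $N_1<N_2<\cdots\to\infty$ along which $\mathrm{dim}_H(\mathcal{X},\overline d_{N_i},\varphi_{N_i},\delta) \ge s N_i$ (possible since $s$ is below the limsup). On each metric space $(\mathcal{X},\overline d_{N_i})$ I apply the dynamical Frostman lemma (Lemma \ref{lem3}) to obtain a Borel probability measure $\nu_i$ satisfying a mass-lower-bound estimate of the form $\nu_i(E)\le (\mathrm{diam}_{\overline d_{N_i}}E)^{c\cdot \mathrm{dim}_H(\mathcal{X},\overline d_{N_i},\delta)}$ for small sets. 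The subtlety coming from the potential is that the exponent one really controls involves $sN_i - \sup_E\varphi_{N_i}$ rather than $sN_i$; here one uses the hypothesis $|\varphi_{n+1}-\varphi_n|\le K$ to compare $\varphi_{N_i}$ with $N_i$-scalings of bounded quantities, so that the Frostman measure can be arranged to see the potential-weighted Hausdorff content. I then form the Ces\`aro averages $\mu_i = \frac{1}{N_i}\sum_{k=0}^{N_i-1}\nu_i\circ T^{-k}$ and pass to a subsequential weak$^*$ limit $\mu$; by Lemma \ref{lem2}, $\mu\in\mathcal{M}(\mathcal{X},T)$ and $\limsup_i \frac{1}{N_i}\int\varphi_{N_i}\,d\nu_i \le \mathcal{F}_*(\mu)$, which is exactly the place where sub-additivity (rather than additivity) of $\mathcal{F}$ is consumed.

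Next I would estimate the rate distortion function $R_\mu(\epsilon)$ from below for small $\epsilon$. Given a pair $(X,Y)$ realizing (approximately) the infimum defining $R_\mu(\epsilon)$ over a block of length $n$, I restrict to a large $N_i$, use stationarity of $\mu$ and the subadditivity of mutual information (Lemma \ref{lems}) to reduce to the single metric $\overline d_{N_i}$, and then apply Lemma \ref{ml2} with the Frostman measure $\nu_i$ (after the standard approximation replacing $\mu$ on $(\mathcal{X},\overline d_{N_i})$ by $\nu_i$ up to a controlled error, cf. Lemma \ref{lem}, which provides the needed couplings between finite-range approximations of $\mu_i$ and $\mu$ so that $I$ is continuous in the limit by Lemma \ref{le1}). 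Lemma \ref{ml2} converts the mass bound into $I(X;Y)\gtrsim (\text{exponent})\cdot\log(1/\epsilon)$, and dividing by $N_i\log(1/\epsilon)$ and letting $i\to\infty$ yields $\underline{\mathrm{rdim}}(\mathcal{X},T,d,\mu) \ge c\,s - (\text{error in }\delta) - \mathcal{F}_*(\mu)$, hence $\underline{\mathrm{rdim}}(\mathcal{X},T,d,\mu) + \mathcal{F}_*(\mu) \ge cs - O(\delta)$. Finally I let $\delta\to 0$, $c\to 1$, and $s\to\mathrm{mdim}_{H,L^1}(\mathcal{X},T,d,\mathcal{F})$ to conclude.

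The main obstacle I expect is the bookkeeping that keeps the potential term $\sup_E\varphi_{N}$ (which is not additive and only sub-additive) correctly coupled through three separate limiting procedures: the Frostman construction on $(\mathcal{X},\overline d_{N_i})$, the Ces\`aro averaging / weak$^*$ limit giving $\mathcal{F}_*(\mu)$, and the rate-distortion lower bound via Lemma \ref{ml2}. In the additive case these terms telescope cleanly; here one must repeatedly invoke $|\varphi_{n+1}-\varphi_n|\le K$ to control $\frac{1}{N}\varphi_N$ uniformly and to ensure the exponents appearing in the Hausdorff contents differ from the ``true'' potential-weighted exponents only by $o(N)$. Making all these error terms uniform in $\epsilon$, $N$, and the covering used — so that they genuinely vanish in the final triple limit — is the technical heart of the argument.
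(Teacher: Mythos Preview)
Your overall architecture is right and matches the paper: reduce to the $L^1$ mean Hausdorff dimension via Lemma \ref{le2}, produce Frostman measures $\nu_k$ on $(\mathcal{X},\overline d_{n_k})$, Ces\`aro-average to an invariant $\mu$, feed Lemma \ref{lem2} for $\mathcal{F}_*(\mu)$, and run the Misiurewicz-type rate-distortion argument (Lemmas \ref{lems}, \ref{lemc}, \ref{lem}, \ref{le1}, \ref{ml2}) to bound $\underline{\mathrm{rdim}}$ from below.

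There is, however, a genuine missing step at the Frostman stage. Lemma \ref{lem3} is a \emph{potential-free} Frostman lemma: it gives $\nu(E)\le(\mathrm{diam}\,E)^{c\cdot\dim_H(\mathcal{X},\overline d_{n_k},\delta)}$. You only know that the \emph{potential-weighted} dimension $\dim_H(\mathcal{X},\overline d_{n_k},\varphi_{n_k},\delta)$ exceeds $sn_k$; you have no control over $\dim_H(\mathcal{X},\overline d_{n_k},\delta)$. Your sentence ``the Frostman measure can be arranged to see the potential-weighted Hausdorff content'' does not correspond to any available tool, and the bound $|\varphi_{n+1}-\varphi_n|\le K$ alone cannot strip the potential out of the exponent $sN-\sup_E\varphi_N$, since $\varphi_N$ may genuinely vary on the scale $N$ across $\mathcal{X}$.

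The paper closes this gap by a \emph{level-set decomposition}: one partitions $\mathcal{X}$ into the slabs $(\varphi_{n_k}/n_k)^{-1}[t,t+\eta]$, with $t$ ranging over a grid in $[-\|\varphi_1\|_\infty-K,\|\varphi_1\|_\infty+K]$ (this is where the hypothesis $|\varphi_{n+1}-\varphi_n|\le K$ is actually used, to bound $\|\varphi_N\|_\infty\le(\|\varphi_1\|_\infty+K)N$). By pigeonhole one slab carries enough potential-weighted Hausdorff content, and on that slab $\varphi_{n_k}\approx t n_k$, so the weighted exponent $sn_k-\sup_E\varphi_{n_k}$ becomes the \emph{constant} exponent $(s-t)n_k$. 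Now Lemma \ref{lem3} applies on the slab and yields $\nu_k(E)\le(\mathrm{diam}\,E)^{c(s-t)n_k}$. This decoupling is what makes the rest go through cleanly: the rate-distortion argument gives $\underline{\mathrm{rdim}}(\mathcal{X},T,d,\mu)\ge c(s-t)$, while the support condition $\nu_k\bigl((\varphi_{n_k}/n_k)^{-1}[t,t+\eta]\bigr)=1$ combined with Lemma \ref{lem2} gives $\mathcal{F}_*(\mu)\ge t$ separately. Adding, one gets $cs+(1-c)t\ge cs-(1-c)(\|\varphi_1\|_\infty+K)$, and the final limits are $c\to1$ and $s\uparrow\mathrm{mdim}_{H,L^1}$ (the parameter $\eta$ is absorbed earlier). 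Without this level-set trick the potential term and the rate-distortion term remain entangled, and your sketch does not indicate how to separate them.
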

The Theorem \ref{thmc} follows from Lemma \ref{le2} and Theorem \ref{ss}.
\begin{theorem}\label{ss}
	Assume that $\overline{{\rm mdim_{M}}}(\mathcal{X}, T, d)<\infty$ for all $d\in \mathcal{D}(X)$ and there exists $K>0$ such that
	$|\varphi_{n+1}(x)-\varphi_{n}(x)|\leq K, ~\forall x\in \mathcal{X}~,n\in \mathbb{N}$. For any dynamical system $(\mathcal{X}, d)$ with metric $d$, then
	$${\rm mdim}_{H, L^{1}}(\mathcal{X}, T, d, \mathcal{F}) \leq \sup\limits_{\mu \in  \mathcal{M}(\mathcal{X}, T)}(\underline{{\rm rdim}}(\mathcal{X}, T, d, \mu)+ \mathcal{F}_{*}(\mu)).$$
\end{theorem}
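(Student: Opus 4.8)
The plan is to prove Theorem \ref{ss} by combining the dynamical Frostman lemma (Lemma \ref{lem3}) with the rate distortion lower bound (Lemma \ref{ml2}), following the scheme of Lindenstrauss--Tsukamoto but tracking the sub-additive potential carefully. First I would fix $s < {\rm mdim}_{H,L^1}(\mathcal{X},T,d,\mathcal{F})$ and a small $c<1$; by the definition of $L^1$-mean Hausdorff dimension with potential, there is a sequence $N_i\to\infty$ and small scales $\delta_i\to 0$ with ${\rm dim}_H(\mathcal{X},\overline{d}_{N_i},\varphi_{N_i},\delta_i)\geq sN_i$. Applying Lemma \ref{lem3} to the metric space $(\mathcal{X},\overline{d}_{N_i})$ with potential absorbed appropriately, I obtain Borel probability measures $\nu_i$ on $\mathcal{X}$ satisfying a Frostman-type bound $\nu_i(E)\leq ({\rm diam}(E,\overline{d}_{N_i}))^{c\cdot(sN_i - \sup_E\varphi_{N_i})}$ for small sets; here one must be careful that the potential version of Lemma \ref{lem3} produces the exponent with the $-\sup_E\varphi_{N_i}$ correction, which is where the uniform bound $|\varphi_{n+1}-\varphi_n|\le K$ and the bounded-distortion-free handling of $\varphi_{N_i}$ via $\varphi_{N_i} \le N_i(\|\varphi_1\|_\infty+K)$ enters.

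Next I would symmetrize: form $\mu_i = \frac{1}{N_i}\sum_{j=0}^{N_i-1}\nu_i\circ T^{-j}$, pass to a subsequence so that $\mu_i \to \mu$ weakly, and invoke Lemma \ref{lem2} to conclude $\mu\in\mathcal{M}(\mathcal{X},T)$ together with the crucial inequality $\limsup_i \frac{1}{N_i}\int\log f_{N_i}\,d\nu_i \le \mathcal{F}_*(\mu)$, where $f_{N_i} = \exp\varphi_{N_i}$ (so $\log f_{N_i}=\varphi_{N_i}$). Then for each $i$ I would estimate the rate distortion function $R_\mu(\epsilon)$ from below: take a random variable $X$ with law $\mu$ and $Y=(Y_0,\dots,Y_{N_i-1})$ approximating $(X,\dots,T^{N_i-1}X)$ within $\epsilon$ in the averaged sense \eqref{co1}. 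Using the standard Lindenstrauss--Tsukamoto argument that transfers the Frostman bound on $(\mathcal{X},\overline{d}_{N_i})$ under $\nu_i$ to the measure $\mu$ (via the ergodic averaging and a pigeonhole/coupling step using Lemma \ref{lem}), I would arrange to apply Lemma \ref{ml2} with exponent roughly $c s N_i - c\sup\varphi_{N_i}$ scale $\delta\sim\delta_i$, obtaining
\begin{align*}
\frac{I(X;Y)}{N_i} \geq c\,s\,\log(1/\epsilon) - \frac{c}{N_i}\int\varphi_{N_i}\,d\nu_i\cdot\big(\text{correction}\big) - \frac{T(csN_i+1)}{N_i},
\end{align*}
so that after dividing by $\log(1/\epsilon)$, letting $\epsilon\to 0$ and then $i\to\infty$, the potential term converges to $\mathcal{F}_*(\mu)$ by Lemma \ref{lem2}, yielding $\underline{{\rm rdim}}(\mathcal{X},T,d,\mu) \geq c s - (\text{terms}\to 0 \text{ with } c\to 1, s\to {\rm mdim}_{H,L^1})$, hence $cs \le \underline{{\rm rdim}}(\mathcal{X},T,d,\mu)+\mathcal{F}_*(\mu)$. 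Finally, letting $c\to 1$ and $s\to{\rm mdim}_{H,L^1}(\mathcal{X},T,d,\mathcal{F})$ gives the theorem, and Theorem \ref{thmc} then follows by Lemma \ref{le2}.

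The main obstacle I anticipate is the bookkeeping of the potential through the geometric-measure-theory-to-rate-distortion transfer: Lemma \ref{ml2} is stated for a plain exponent $s$, so I must incorporate the $\sup_E\varphi_{N_i}$ term by a dyadic (level-set) decomposition of $\mathcal{X}$ according to the value of $\frac{1}{N_i}\varphi_{N_i}$, apply Lemma \ref{ml2} on each level with the adjusted exponent, and then recombine using concavity/subadditivity of mutual information (Lemmas \ref{lemc}, \ref{lems}) — this is exactly where $|\varphi_{n+1}-\varphi_n|\le K$ is needed to keep the number of relevant levels and the oscillation of $\varphi_{N_i}/N_i$ under control as $N_i\to\infty$, and where the hypothesis $\overline{{\rm mdim_M}}(\mathcal{X},T,d)<\infty$ guarantees the exponents stay finite so that the error term $T(csN_i+1)/N_i$ is genuinely negligible. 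The coupling step (Lemma \ref{lem}) must also be applied at the level of the finite quantizations of $X$, ensuring the $\nu_i$-a.s. Frostman property survives the weak limit defining $\mu$; I expect this to require choosing the quantization scale carefully relative to $\delta_i$ and $\epsilon$.
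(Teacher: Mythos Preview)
Your proposal has the right ingredients --- Frostman (Lemma \ref{lem3}), the Misiurewicz-style transfer via coupling and convexity/subadditivity (Lemmas \ref{lem}, \ref{lemc}, \ref{lems}), the rate-distortion lower bound (Lemma \ref{ml2}), and a level-set decomposition of $\mathcal{X}$ according to $\frac{1}{N}\varphi_N$ --- but the order in which you invoke them is off, and this creates a genuine gap.

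The gap is your Step 3: you appeal to a ``potential version'' of Lemma \ref{lem3} giving
$\nu_i(E)\leq ({\rm diam}(E,\overline{d}_{N_i}))^{c(sN_i-\sup_E\varphi_{N_i})}$.
No such version is stated or proved; Lemma \ref{lem3} only produces a measure with exponent $c\cdot{\rm dim}_H(\mathcal{X},d,\delta)$ for the \emph{plain} Hausdorff dimension. You then propose to repair this at the rate-distortion stage by level-decomposing and recombining via Lemmas \ref{lemc} and \ref{lems}. But once you have symmetrized $\nu_i\mapsto\mu_i$ and passed to a weak limit $\mu$, the measure is no longer supported on any single level set of $\varphi_{N_i}/N_i$, so there is no clean way to feed a single exponent into Lemma \ref{ml2} and then ``recombine'': different levels would call for different Frostman measures and hence different limit measures, and the supremum over $\mu$ does not commute with this.

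The paper's fix is to move the level-set decomposition \emph{before} Frostman. Fix a single small scale $\delta<\delta_0(c)$ (not $\delta_i\to 0$). From ${\rm dim}_H(\mathcal{X},\overline{d}_{n_k},\varphi_{n_k},\delta)>(s+2\eta)n_k$ one pigeonholes over the finitely many slabs $(\varphi_{n_k}/n_k)^{-1}[t,t+\eta]$ with $|t|\le \|\varphi_1\|_\infty+K$ (this is exactly where the bound $|\varphi_{n+1}-\varphi_n|\le K$ is used) to find a single $t$ and infinitely many $n_k$ with
${\rm dim}_H\big((\varphi_{n_k}/n_k)^{-1}[t,t+\eta],\,\overline{d}_{n_k},\,\delta\big)\ge (s-t)n_k$,
the point being that on this slab the potential is essentially the constant $tn_k$, so the potential-Hausdorff dimension reduces to the plain one. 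Now Lemma \ref{lem3} applies verbatim, giving $\nu_k$ supported on the slab with $\nu_k(E)\le ({\rm diam}(E,\overline{d}_{n_k}))^{c(s-t)n_k}$. After symmetrizing and passing to $\mu$, Lemma \ref{lem2} together with the support condition yields $\mathcal{F}_*(\mu)\ge t$. The entire rate-distortion half of the argument is then potential-free and literally identical to Lindenstrauss--Tsukamoto, giving $\underline{\rm rdim}(\mathcal{X},T,d,\mu)\ge c(s-t)$. Adding these,
$\underline{\rm rdim}(\mu)+\mathcal{F}_*(\mu)\ge c(s-t)+t=cs+(1-c)t\ge cs-(1-c)(\|\varphi_1\|_\infty+K)$,
and letting $c\to 1$, $s\to{\rm mdim}_{H,L^1}(\mathcal{X},T,d,\mathcal{F})$ finishes. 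So the potential never enters Lemma \ref{ml2} or the Frostman step at all; it is entirely absorbed into the choice of $t$ and recovered through $\mathcal{F}_*(\mu)\ge t$.
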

\begin{proof}[Proof of Theorem \ref{ss}]
	We extend the definition of $\overline{d}_{n}$. For $x=(x_{0}, x_{1}, \cdots, x_{n-1})$ and $y=(y_{0}, y_{1}, \cdots, y_{n-1})$ in $\mathcal{X}^{n}$, we set $$\overline{d}_{n}(x, y)=\dfrac{1}{n}\sum\limits_{i=0}^{n-1}d(x_{i}, y_{i}).$$
	
	Let $0<c<1$ and $s<{\rm mdim}_{H, L^{1}}(\mathcal{X}, T, d, \mathcal{F})$ be arbitrary. Then there exists  an invariant probability measure $\mu$ on $\mathcal{X}$ such that
	\begin{align}\label{eq4}
	\underline{{\rm rdim}}(\mathcal{X}, T, d, \mu)+\mathcal{F}_{*}(\mu)\geq cs-(1-c)\left\|  \varphi_{1}\right\|_{\infty}.
	\end{align}
	Take $\eta>0$ satisfying ${\rm mdim}_{H, L^{1}}(\mathcal{X}, T, d, \mathcal{F})-2\eta >s.$ Let $\delta_{0}=\delta_{0}(c)\in (0,1)$ be a constant given by Lemma \ref{lem3}. There exist $0<\delta <\delta_{0}$ and a sequence $n_{1}<n_{2}<n_{3}<\cdots\rightarrow \infty$ satisfying
	$$ {\rm dim}_{H}(\mathcal{X}, \overline{d}_{n_{k}},  \varphi_{n_{k}}, \delta) >(s+2\eta)n_{k}.$$
\begin{claim}
	There exists $t\in \left[-\left\|  \varphi_{1} \right\|_{\infty}-K,  \left\| \varphi_{1} \right\|_{\infty}+K \right] $ such that for infinitely many $n_{k}$
	$$ {\rm dim}_{H}\left( (\dfrac{ \varphi_{n_{k}}}{n_{k}})^{-1}[t, t+\eta], \overline{d}_{n_{k}}, \delta\right) \geq (s-t)n_{k}.$$
\end{claim}
\begin{proof}
	Since ${\rm dim}_{H}(\mathcal{X}, \overline{d}_{n_{k}}, \varphi_{n_{k}}, \delta) >(s+2\eta)n_{k}$, we have $$H_{\delta}^{(s+2\eta)n_{k}}(\mathcal{X}, \overline{d}_{n_{k}}, \varphi_{n_{k}})\geq 1.$$ Set $m=[\dfrac{2\left\|  \varphi_{1}\right\|_{\infty}+2K }{\eta}]$ and consider a decomposition of $\mathcal{X}$, namely,
	$$\mathcal{X}=\bigcup\limits_{l=0}^{m-1}(\dfrac{ \varphi_{n_{k}}}{n_{k}})^{-1}[l\eta, (l+1)\eta].$$
	Then there exists $t\in \left\lbrace -\left\|  \varphi_{1} \right\|_{\infty}-K+l\eta| l=0, 1, \cdots, m-1 \right\rbrace $ such that for infinitely many $n_{k}$
	$$ H_{\delta}^{(s+2\eta)n_{k}}\left( (\dfrac{ \varphi_{n_{k}}}{n_{k}} )^{-1}[t, t+\eta], \overline{d}_{n_{k}},  \varphi_{n_{k}}\right)\geq \dfrac{1}{m}.$$
	Since $(s+2\eta)n_{k}- \varphi_{n_{k}}\geq (s+2\eta)n_{k}-(t+\eta)n_{k}=(s-t)n_{k}+\eta n_{k}$ on  the set $( \varphi_{n_{k}}/n_{k} )^{-1}[t, t+\eta],$
	\begin{align*}
	H_{\delta}^{(s+2\eta)n_{k}}((\dfrac{ \varphi_{n_{k}}}{n_{k}})^{-1}[t, t+\eta], \overline{d}_{n_{k}},  \varphi_{n_{k}})&\leq H_{\delta}^{(s-t)n_{k}+\eta n_{k}}((\dfrac{ \varphi_{n_{k}}}{n_{k}})^{-1}[t, t+\eta], \overline{d}_{n_{k}})\\&\leq \delta^{\eta n_{k}}\cdot H_{\delta}^{(s-t)n_{k}}((\dfrac{ \varphi_{n_{k}}}{n_{k}})^{-1}[t, t+\eta], \overline{d}_{n_{k}}).
	\end{align*}
	Hence for infinitely many $n_{k}$,
	$$ H_{\delta}^{(s-t)n_{k}}\left( (\dfrac{ \varphi_{n_{k}}}{n_{k}})^{-1}[t, t+\eta], \overline{d}_{n_{k}}\right) \geq \dfrac{\delta^{-\eta n_{k}}}{m}.$$
	The right-hand side is large than one for sufficiently large $n_{k}$. Then for such $n_{k}$
	$${\rm dim}_{H}\left( (\dfrac{ \varphi_{n_{k}}}{n_{k}})^{-1}[t, t+\eta], \overline{d}_{n_{k}}, \delta\right) \geq  (s-t)n_{k}.$$
\end{proof}
By  choosing a subsequence of $n_{k}$ (also denoted by $\left\lbrace n_{k} \right\rbrace $), we assume that the condition
$$ {\rm dim}_{H}\left( (\dfrac{ \varphi_{n_{k}}}{n_{k}})^{-1}[t, t+\eta]\right) , \overline{d}_{n_{k}}, \delta)\geq (s-t)n_{k}$$
holds for all $n_{k}$. Noting that $0<\delta<\delta_{0}(c)$, we apply Lemma \ref{lem3} to  the subspace $(\dfrac{ \varphi_{n_{k}}}{n_{k}})^{-1}[t, t+\eta]\subset \mathcal{X}$. Then we can find a Borel probability measure $\nu_{k}$ supported on $(\dfrac{ \varphi_{n_{k}}}{n_{k}})^{-1}[t, t+\eta]$ such that
\begin{align}\label{eq7}
\nu_{k}(E)\leq ({\rm diam}(E, \overline{d}_{n_{k}}))^{c(s-t)n_{k}}~~\text{for all}~E\subset \mathcal{X}~\text{with} ~~{\rm diam}(E, \overline{d}_{n_{k}})<\dfrac{\delta}{6}.
\end{align}
Notice that $\nu_{k}$ is not necessarily invariant under $T$. Set
$$ \mu_{k}=\dfrac{1}{n_{k}}\sum\limits_{n=0}^{n_{k}-1}T_{*}^{n}\nu_{k}.$$
By choosing a subsequence (also denoted by $\left\lbrace n_{k} \right\rbrace $ again) we can assume that $\mu_{k}$ converges to some $\mu \in \mathcal{M}(\mathcal{X}, T)$ in the $weak^{*}$ topology. By Lemma \ref{lem2}
$$\limsup\limits_{k\rightarrow \infty}\dfrac{1}{n_{k}} \int_{\mathcal{X}} \varphi_{n_{k}}d\nu_{k}\leq \mathcal{F}_{*}(\mu)=\lim\limits_{k\rightarrow \infty}\dfrac{1}{n_{k}} \int_{\mathcal{X}} \varphi_{n_{k}} d \mu.$$
On the other hand
$$\int_{\mathcal{X}} \dfrac{ \varphi_{n_{k}}}{n_{k}} d \nu_{k} \geq t $$
since $\nu_{k}$ is supported on the set $(\dfrac{ \varphi_{n_{k}}}{n_{k}})^{-1}[t, t+\eta]$. Hence
$ \mathcal{F}_{*}(\mu)\geq t.$ Moreover, since $0\leq\overline{\rm{rdim}
}(\mathcal{X}, T, d, \mu)<\infty$. Then  we need to  prove
\begin{align}\label{eq5}
\underline{\rm rdim}(\mathcal{X}, T, d, \mu )\geq c(s-t).
\end{align}
If the above inequality holds, we  will get (\ref{eq4}) (recall $\left| t \right|\leq \left\| \varphi_{1} \right\|_{\infty}+K  $):
$$\underline{{\rm rdim}}(\mathcal{X}, T, d, \mu)+\mathcal{F}_{*}(\mu)\geq c(s-t)+t=cs+(1-c)t \geq cs-(1-c)(\left\| \varphi_{1} \right\|_{\infty}+K).$$
So the rest of the problem is to prove $(\ref{eq5}) .$ This part  of the proof is the same as \cite{LT}. The method is  a " rate distortion theory version" of Misiurewicz's technique \cite{Mis76} (a famous proof of the standard variational principle) first developed in \cite{LT18}. The paper \cite{LT} explained more background ideas  behind the proof, which we do not repeat here.

Let $\epsilon$ be an arbitrary positive number with $2 \epsilon \log (1/\epsilon)\leq \delta/ 10. $ We will show a lower bound on the rate distortion function of the form
$$ R(d, \mu, \epsilon)\geq c(s-t)\log (1/\epsilon)+ \text{small error terms.}$$
Let $X$ and $Y=(Y_{0}, Y_{1}, \cdots, Y_{m-1})$ be random variables defined on a probability space $(\Omega, \mathbb{P})$ such that $X, Y_{0}, \cdots, Y_{m-1}$ take values in $\mathcal{X}$ and satisfy
$$ {\rm Law}(X)=\mu, ~~\mathbb{E}\left( \dfrac{1}{m}\sum\limits_{j=0}^{m-1}d (T^{j}X, Y_{j})\right) <\epsilon.$$
We would like  to establish  a lower bound on the mutual information $I(X; Y)$. For this purpose, we can assume that $Y$ takes only finitely  many values. Let $\mathcal{Y} \subset \mathcal{X}^{m}$ be the (finite) set of possible values of $Y$.

We choose $\tau >0$ satisfying
\begin{align}\label{eqa}
\tau \leq  \min(\dfrac{\epsilon}{3}, \dfrac{\delta}{20}),~~\dfrac{\tau}{2}+\mathbb{E}\left( \dfrac{1}{m}\sum_{j=0}^{m-1}d(T^{j}X, Y_{j})\right) <\epsilon.
\end{align}
We take a measurable partition $\mathcal{P}=\left\lbrace P_{1}, \cdots, P_{L} \right\rbrace $ of $\mathcal{X}$ such that for all $1\leq l \leq L$
$$ {\rm diam}(P_{l}, d)< \dfrac{\tau}{2},~~\mu(\partial P_{l})=0.$$
We choose a point $p_{k} \in P_{k}$ for each
$1\leq k \leq K$. Set $A= \left\lbrace p_{1}, \cdots, p_{K} \right\rbrace $. We define  a map $\mathcal{P}: \mathcal{X} \rightarrow A$ by $\mathcal{P}(x)=p_{k}$ for $x \in P_{k}$. It follows that
\begin{align}\label{eq6}
d(x, \mathcal{P}(x))< \epsilon.
\end{align}
For $n \geq 1$, we set
$\mathcal{P}^{n}(x)= (\mathcal{P}(x), \mathcal{P}(T(x)), \cdots, \mathcal{P}(T^{n-1}x)).$
\begin{claim}\label{cla}
	The pushforward measure $\mathcal{P}_{*}^{n_{k}}\nu_{k}$ satisfies
	$$ \mathcal{P}_{*}^{n_{k}}\nu_{k}(E)\leq (\tau + {\rm diam}(E, \overline{d}_{n_{k}}))^{c(s-t)n_{k}}~~\text{for all}~E\subset A^{n_{k}}~\text{with}~{\rm diam}(E, \overline{d}_{n_{k}})<\dfrac{\delta}{10}.$$
\end{claim}
\begin{proof}
	From ${\rm diam}(P_{l}, d)<\tau/2$ and $\tau \leq \delta/20,$ if ${\rm diam}(E, \overline{d}_{n_{k}})<\delta /10$ then
	$${\rm diam}((\mathcal{P}_{n_{k}})^{-1}E, \overline{d}_{n_{k}})<\tau+{\rm diam}(E, \overline{d}_{n_{k}})<\dfrac{\delta}{6}.$$
	By $(\ref{eq7})$, the measure $\mathcal{P}_{*}^{n_{k}}(E)=\nu_{k}((\mathcal{P}^{n_{k}})^{-1} E)$ is bounded by
	$$ ({\rm diam}((\mathcal{P}^{n_{k}})^{-1} E), \overline{d}_{n_{k}})^{c(s-t)n_{k}}< (\tau + {\rm diam}(E, \overline{d}_{n_{k}}))^{c(s-t)n_{k}}.$$
\end{proof}
From $\mu_{k}\rightarrow \mu$ and $\mu(\partial P_{l})=0,$ we have $\mathcal{P}_{*}^{m}\mu_{k}\rightarrow \mathcal{P}_{*}^{m}\mu$. By Lemma {\ref{lem}}, there exists a coupling $\pi_{k}$ between $\mathcal{P}_{*}^{m}\mu_{k}$ and $\mathcal{P}_{*}^{m}\mu$ such that $\pi_{k}\rightarrow (id \times id)_{*}\mathcal{P}_{*}^{m}\mu.$
Let $X(k)$ be a random variable couple to $\mathcal{P}^{m}(X)$ such that  it takes values in $A^{m}$ and  Law $(X(k), \mathcal{P}^{m}(X))=\pi_{k}.$ In particular, ${\rm Law}X(k)=\mathcal{P}_{*}^{m}\mu_{k}.$ From $\pi_{k} \rightarrow (id \times id)_{*}\mathcal{P}_{*}^{m}\mu,$
$$\mathbb{E}\overline{d}_{m}(X(k), \mathcal{P}^{m}(X))\rightarrow 0.$$
The random variables $X(k)$ and $Y$ are coupled by the probability mass function
$$ \sum \limits_{x'\in A^{m}}\pi_{k}(x, x')\mathbb{P}(Y=y|\mathcal{P}^{m}(X)=x')~~(x\in A^{m}, y \in \mathcal{Y}),$$
which converges to $\mathbb{P}(\mathcal{P}^{m}(X)=x, Y=y).$ Then by Lemma \ref{le1},
\begin{align}\label{lim}
I(X(k); Y)\rightarrow I(\mathcal{P}^{m}(X); Y).
\end{align}
By the triangle inequality
\begin{align*}
\overline{d}_{m}(X(k), Y)\leq& \overline{d}_{m}(X(k), \mathcal{P}^{m}(X))+ \overline{d}_{m}(\mathcal{P}^{m}(X), (X, TX, \cdots, T^{m-1}X))\\&+\overline{d}_{m}((X, TX, \cdots, T^{m-1}X), Y)
\end{align*}
We have $\mathbb{E}\overline{d}_{m}(X(k), \mathcal{P}^{m}(X))\rightarrow 0$, ${\rm diam}(P_{l}, d)<\tau/ 2$ for all $1\leq l \leq L$ and $\tau /2+ \mathbb{E} \overline{d}_{m}((X, TX, \cdots, T^{m-1}X), Y)<\epsilon$ in (\ref {eqa}). Then
\begin{align}\label{ieq}
\mathbb{E} \overline{d}_{m}(X(k), Y)<\epsilon ~~\text{for sufficiently large}~ k
\end{align}

Let $n_{k}=qm+r$ with $ m\leq r \leq 2m-1.$ Fix a point $a\in \mathcal{X}$. We denote by $\delta_{a}(\cdot)$ the delta probability  measure at $a$ on $\mathcal{X}$. For $x\in (x_{0}, \cdots, x_{n-1}) \in \mathcal{X}^{n}$, we let $x_{k}^{l}$ denote the $(l-k+1)$-tuple $x_{k}^{l}=(x_{k}, \cdots, x_{l})$  for $0\leq k \leq l <n$.  We consider a conditional probability mass function
$$ \rho_{k}(y|x)=\mathbb{P}(Y=y| X(k)=x)$$ for $x, y \in \mathcal{X}^{m}$ with $\mathbb{P}(X(k)=x)=\mathcal{P}_{*}^{m}\mu_{k}(x)>0.$   We define probability mass functions $\sigma_{k,0}(\cdot| x), \cdots, \sigma_{k,m-1}(\cdot| x)$ on $\mathcal{X}^{n}$ by
\begin{align}\label{3.9}
\sigma_{k,j}=\prod_{j=0}^{q-1} \rho_{k}(y_{j+im}^{j+im+m-1}| x_{j+im}^{j+im+m-1})\times \prod_{n \in [0, j)\cup [mq+j, n_{k}]}\delta_{a}(y_{n_{k}}).
\end{align}
We set
\begin{align}\label{eq}
\sigma_{k}(y|x)=\dfrac{\sigma_{k,0}(y|x)+\sigma_{k,1}(y|x)+ \cdots + \sigma_{k, m-1}(y|x)}{m}.
\end{align}
Let $X'(k)$ be a random variable taking values in $\mathcal{X}$ with Law$X'(k)=\nu_{k}$. Set $Z(k)=\mathcal{P}^{n_{k}}(X'(k)).$ We define a random variable $W(k)$ taking values in $\mathcal{X}^{n_{k}}$ and coupled to $Z(k)$ by the condition
$$\mathbb{P}(W(k)=y| Z(k)=x)=\sigma_{k}(y|x).$$ For $0\leq j<m$ we also define $W(k, j)$ by
$$ \mathbb{P}(W(k, j)=y| Z(k)=x)=\sigma_{k,j}(y|x).$$
\begin{claim}\label{2}
	$\dfrac{1}{m} I(X_{k}; Y) \geq \dfrac{1}{n} I(Z(k); Y).$
\end{claim}

\begin{proof}
	The  mutual information is a convex function of conditional probability measure (Lemma \ref{lemc}). Hence
	$$ I(Z(k); W(k))\leq \dfrac{1}{m}\sum\limits_{j=0}^{m-1}I( Z(k); W(k, j)).$$
	By the subadditivity under conditional independence (Lemma \ref{lems}),
	$$ I(Z(k); W(k, j))\leq \sum\limits_{i=0}^{q-1}I(Z(k); W(k, j)_{j+im}^{j+im+m-1}).$$
	The  term $I(Z(k); W(k, j)_{j+im}^{j+im+m-1})$ is equal to
	$$ I(\mathcal{P}^{m}(T^{j+im}X'(k); W(k,j)_{j+im}^{j+im+m-1}=I(\mathcal{P}_{*}^{m}T^{j+im}\nu_{k}, \rho_{k}).$$
	Therefore
	\begin{align*}
	\dfrac{m}{n_{k}}I(Z(k), W(k))&\leq \dfrac{1}{n_{k}}\sum\limits_{\substack{0\leq j<m\\ 0\leq i< q}}I(\mathcal{P}_{*}^{m}T^{j+im}\nu_{k}, \rho_{k})\\&\leq \dfrac{1}{n_{k}}\sum\limits_{n=0}^{n_{k}-1}I(\mathcal{P}_{*}^{m}T^{n}\nu_{k}, \rho_{k})\\&\leq I(\dfrac{1}{n_{k}}\sum\limits_{n=0}^{n_{k}-1}\mathcal{P}_{*}^{m}T^{n}\nu_{k}, \rho_{k}) ~\text{by the concavity in Lemma \ref{lemc}}\\&= I(\mathcal{P}_{*}^{m}\mu_{k}, \rho_{k})~\text{by}~\mu_{k}=\dfrac{1}{n_{k}}\sum\limits_{n=0}^{n_{k}-1}T_{*}^{n}\nu_{k}\\&=I(X(k); Y).
	\end{align*}
\end{proof}

\begin{claim}\label{cla1}
	For sufficiently large $k$
	$$\mathbb{E}(\overline{d}_{n_{k}}(Z(k), W(k)))<\epsilon. $$
\end{claim}
\begin{proof}
	By (\ref{eq}), we have	$$\mathbb{E}(\overline{d}_{n}(Z(k), W(k))=\dfrac{1}{m}\sum\limits_{ j=0}^{m-1}\mathbb{E}(\overline{d}_{n_{k}}(Z(k), W(k,j)). $$
	From,  $Z(k)=\mathcal{P}^{n_{k}}(X'(k), W(k, j)_{j+im}^{j+im+m-1})$, the distance $\overline{d}_{n_{k}}(Z(k), W(k, j))$ is bounded by
	$$ \dfrac{r\cdot {\rm diam}(\mathcal{X}, d)}{n_{k}}+\dfrac{m}{n_{k}}\sum\limits_{i=0}^{q-1}\overline{d}_{m}(\mathcal{P}^{m}(T^{j+im} X'(k), W(k, j)_{j+im}^{j+im+m-1})).$$
	$\mathbb{E}\overline{d}_{m}(\mathcal{P}^{m}(T^{j+im} X'(k), W(k, j)_{j+im}^{j+im+m-1})$ is equal to
	$$\sum\limits_{x, y \in \mathcal{X}^{m}}\overline{d}_{m}(x, y)\rho_{k}(y|x)\mathcal{P}^{m}T^{j+im} \nu_{k}(x).$$
	Therefore
	
	\begin{align*}
	\mathbb{E}(\overline{d}_{n}(Z(k), W(k)) &
	\leq \dfrac{r\cdot {\rm diam}(\mathcal{X}, d)}{n_{k}}+\sum\limits_{x, y \in \mathcal{X}^{m}}\overline{d}_{m}(x, y)\rho_{k}(y|x)\left( \dfrac{1}{n_{k}}\sum\limits_{\substack{0\leq j<m\\ 0\leq i< q}}\mathcal{P}_{*}^{m} T_{*}^{j+im}\nu_{k}(x) \right) \\& =\dfrac{r\cdot {\rm diam}(\mathcal{X}, d)}{n_{k}}+\sum\limits_{x, y \in \mathcal{X}^{m}}\overline{d}_{m}(x, y)\rho_{k}(y|x)\left( \dfrac{1}{n_{k}}\sum\limits_{n=0}^{n_{k}-1} \mathcal{P}_{*}^{m} T_{*}^{n}\nu_{k}(x)\right)  \\& = \dfrac{r\cdot {\rm diam}(\mathcal{X}, d)}{n_{k}}+\sum\limits_{x, y \in \mathcal{X}^{m}}\overline{d}_{m}(x, y)\rho_{k}(y|x)  \mathcal{P}_{*}^{m}\mu_{k}(x) \\&=\dfrac{r\cdot {\rm diam}(\mathcal{X}, d)}{n_{k}}+\mathbb{E}\overline{d}_{m}(X(k), Y).
	\end{align*}
	From $r\geq 2m$ and (\ref{ieq}), this is less than $\epsilon$ for large $k$.
\end{proof}
Recall $2\epsilon \log (1/\epsilon)\leq \delta/10$ and $\tau\leq \min(\epsilon/3, \delta/ 20).$ The measure Law $Z(k)=\mathcal{P}_{*}^{n_{k}}\nu_{k}$ satisfies the "scaling law" given by Claim \ref{cla}. Then we apply Lemma \ref{ml2} to $(Z(k), W(k))$ with Claim \ref{cla1}, which provides
\begin{align}
I(Z(k); W(k))\geq c(s-t)n_{k}\log (1/\epsilon)-T(c(s-t)n_{k}+1) ~~\text{for large}~k.
\end{align}
Here $T$ is a universal positive constant. From Claim \ref{2},
$$\dfrac{1}{m}I(X(k); Y)\geq c(s-t) \log (1/\epsilon)-T(c(s-t)+\dfrac{1}{n_{k}}).$$
We know $I(X(k); Y)\rightarrow I(\mathcal{P}^{m}(X); Y)$ as $k\rightarrow \infty$ in (\ref{lim}). Hence
$$ \dfrac{1}{m}I(\mathcal{P}^{m}(X); Y)\geq c(s-t) \log (1/\epsilon)-cT(s-t).$$
By the date-processing inequality (Lemma \ref{dp} )
$$\dfrac{1}{m}I(X;Y)\geq \dfrac{1}{m}I(\mathcal{P}^{m}(X); Y)\leq c(s-t) \log (1/\epsilon)-cT(s-t).$$
This proves that for any $\epsilon>0$ with $2\epsilon\log (1/\epsilon)\leq \delta/10$
$$ R(d, \mu, \epsilon)\geq c(s-t) \log (1/\epsilon)-cT(s-t).$$ Thus we get $(\ref {eq5})$:
$$\underline{\rm rdim}(\mathcal{X}, T, d, \mu)=\liminf\limits_{ \epsilon \rightarrow 0}\dfrac{R(d, \mu, \epsilon)}{\log (1/\epsilon)}\geq c(s-t).$$ This establishes the proof of the theorem.
\end{proof}
\section{Proof of Theorem \ref{aa}}\label{d}
In this section, we give some results on combinatorial topology and dynamical tiling construction. We prove the following conclusion.
\begin{theorem}\label{aa}
	If $(\mathcal{X}, T)$  has the marker property and there exists $K>0$ such that $|\varphi_{n+1}-\varphi_{n}|<K$ for every $n$, then there exists a metric ${ d} \in \mathcal{D}(\mathcal{X})$ metric satisfying
	$$ {\rm mdim}(\mathcal{X}, T, \mathcal{F})=\overline{\rm mdim}_{M}(\mathcal{X}, T, d, \mathcal{F}).$$
\end{theorem}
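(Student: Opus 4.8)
For the metric $d$ we shall construct, the inequality ${\rm mdim}(\mathcal X,T,\mathcal F)\le\overline{\rm mdim}_M(\mathcal X,T,d,\mathcal F)$ will be immediate from the chain ${\rm mdim}\le{\rm mdim}_H\le\underline{\rm mdim}_M\le\overline{\rm mdim}_M$ of Theorem~\ref{bound}, the first step using the hypothesis $|\varphi_{n+1}-\varphi_n|<K$ together with bounded distortion of $\mathcal F$, which the construction will respect. Thus the whole content is the reverse inequality for one cleverly chosen $d$, and the plan is to obtain it by a dynamical, sub-additive-potential version of the Pontrjagin--Schnirelmann metric construction, in the spirit of Tsukamoto~\cite{MT}.

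Fix a compatible metric $d_0$ on $\mathcal X$ and a sequence $\epsilon_1>\epsilon_2>\cdots\to0$ decreasing fast, and write $(d_0)_N$ for the iterated metric as in the preliminaries. Since ${\rm Widim}_{\epsilon_k}(\mathcal X,(d_0)_N,\varphi_N)$ is subadditive in $N$, the sequence $\tfrac1N{\rm Widim}_{\epsilon_k}(\mathcal X,(d_0)_N,\varphi_N)$ converges as $N\to\infty$ to some $c_k$ with $c_k\to{\rm mdim}(\mathcal X,T,\mathcal F)$; choose $N_k\uparrow\infty$ so that $\tfrac1h{\rm Widim}_{\epsilon_k}(\mathcal X,(d_0)_h,\varphi_h)\le c_k+2^{-k}$ for every $h\ge N_k$, and for each height $h\in[N_k,2N_k)$ unwind the definition of ${\rm Widim}_{\epsilon_k}$ to get a finite simplicial complex $P_k^{(h)}$ and an $\epsilon_k$-embedding $g_k^{(h)}\colon(\mathcal X,(d_0)_h)\to P_k^{(h)}$ with $\dim_{g_k^{(h)}(x)}P_k^{(h)}+\varphi_h(x)\le(c_k+2^{-k})h$ for all $x\in\mathcal X$. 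Now invoke the marker property at level $N_k$: it gives an open $U_k\subset\mathcal X$ of first return time $>N_k$, whence (by the standard tiling construction, any integer $\ge N_k$ being a sum of integers from $[N_k,2N_k)$) a $T$-equivariant tiling of the orbits of $\mathcal X$ into intervals of lengths in $[N_k,2N_k)$, valid off a set harmless for the estimates below. Running $g_k^{(h)}$ along each tile of length $h$, together with recording which member of a fixed finite small-diameter clopen refinement of $U_k$ the tile's base point lies in, assembles a map $F_k\colon\mathcal X\to Q_k$ into a finite complex; let $\rho_k$ be the truncation at $1$ of the pull-back under $F_k$ of a fixed metric on $Q_k$. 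Finally set
\[
 d\ :=\ \sum_{k\ge1}2^{-k}\bigl(\rho_k+\lambda_k d_0\bigr),
\]
with $\lambda_k\downarrow0$ fast enough that $d$ is a metric compatible with the topology: the $\rho_k$ carry the dimension information at scale $\epsilon_k$, the $\lambda_kd_0$-terms separate points.

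To prove $\overline{\rm mdim}_M(\mathcal X,T,d,\mathcal F)\le{\rm mdim}(\mathcal X,T,\mathcal F)$, fix small $\epsilon$, let $k=k(\epsilon)$ be the index with $2^{-k}\epsilon_k\asymp\epsilon$ (so $k(\epsilon)\to\infty$ and $N_{k(\epsilon)}\log(1/\epsilon)\to\infty$ as $\epsilon\to0$), and let $N$ be large. In $d_N$ the $k$-th summand dominates up to a constant factor absorbed in $\log(1/\epsilon)$, so a set of $d_N$-diameter $<\epsilon$ is essentially specified by (i) the tile pattern of the window $[0,N)$ and its base-point refinement data — at most $C^{N/N_k}$ possibilities, of exponential rate $O(1/N_k)$ in $N$ — and (ii) inside each tile of length $h$, the cell of a mesh-$\epsilon_k$ subdivision of $P_k^{(h)}$ that contains the point's image. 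Estimating $\#(\mathcal X,d_N,\varphi_N,\epsilon)=\inf\sum_i(1/\epsilon)^{\sup_{U_i}\varphi_N}$ with the induced cover, write $[0,N)$ as two end blocks of length $<2N_k$ and full tiles of lengths $h_1,\dots,h_p$ at times $s_1,\dots,s_p$; subadditivity of $\mathcal F$ gives, on each covering set $U_i$,
\[
 \sup_{U_i}\varphi_N\ \le\ \sum_{j=1}^{p}\sup_{T^{s_j}U_i}\varphi_{h_j}\ +\ R_k,\qquad R_k=O_k(1),
\]
the remainder being bounded by subadditivity and $|\varphi_{n+1}-\varphi_n|<K$. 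For a single tile of length $h$: on the preimage of an $\ell$-dimensional subcell of a cell $\sigma$ of $P_k^{(h)}$ one has $\ell+\varphi_h\le\dim_{g_k^{(h)}(\cdot)}P_k^{(h)}+\varphi_h\le(c_k+2^{-k})h$, while that cell has $\asymp\binom{\dim\sigma}{\ell}(1/\epsilon)^{\ell}$ such subcells; summing over $\ell$ and over the finitely many cells of $P_k^{(h)}$, the weighted count attributable to the tile is $\le C_k(1/\epsilon)^{(c_k+2^{-k})h}$, $C_k$ independent of $N$. Multiplying over the $\le N/N_k$ tiles and using $\sum_jh_j\le N$,
\[
 \#(\mathcal X,d_N,\varphi_N,\epsilon)\ \lesssim\ (CC_k)^{N/N_k}\,(1/\epsilon)^{\,R_k+(c_k+2^{-k})N}.
\]
Taking $\tfrac1N\log$, sending $N\to\infty$, dividing by $\log(1/\epsilon)$, and sending $\epsilon\to0$ (hence $k\to\infty$) yields $\overline{\rm mdim}_M(\mathcal X,T,d,\mathcal F)\le\limsup_k(c_k+2^{-k})={\rm mdim}(\mathcal X,T,\mathcal F)$, which completes the argument.

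The obstacle is twofold. The dynamical/combinatorial part is to extract the equivariant tiling from the marker property and to make the near-optimal width-dimension-with-potential embeddings $g_k^{(h)}$ live coherently on it, so that the assembled $F_k$ carries both the local-dimension estimate and the metric control; this is intricate but is by now a standard technique. The genuinely sub-additive point is that one has only $\varphi_{n+m}\le\varphi_n+\varphi_m\circ T^n$, so the potential over a union of tiles is controlled merely from above by the sum over tiles — which is exactly the direction an upper bound on covering numbers needs. The trap to avoid is collapsing all the tile potentials $\varphi_{h_j}$ to a single $\varphi_{N_k}$: that would cost an additive error $\asymp N$ after summation over the $\asymp N/N_k$ tiles and ruin the estimate. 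One sidesteps it by keeping every admissible height $h\in[N_k,2N_k)$ and fixing one embedding per height, so that the only residue is the pair of end blocks, contributing $R_k=O_k(1)$, which is killed on dividing by $N$. The hypothesis $|\varphi_{n+1}-\varphi_n|<K$ is precisely what makes these partial-block corrections manageable and, through Theorem~\ref{bound}, supplies the converse inequality for the metric constructed.
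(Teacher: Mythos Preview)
Your outline has a genuine gap: the assembled map $F_k$ is not continuous. You obtain from the marker property a tiling of each orbit into intervals of lengths in $[N_k,2N_k)$, and then ``run $g_k^{(h)}$ along each tile''; but the tile pattern depends on $x$ only measurably (the first-return time to $U_k$ is integer-valued and jumps), so $F_k$ jumps at points where the tiling changes. The pulled-back $\rho_k$ is then not a continuous function on $\mathcal X\times\mathcal X$, and the sum $d=\sum_k 2^{-k}(\rho_k+\lambda_kd_0)$ need not lie in $\mathcal D(\mathcal X)$. Your parenthetical ``valid off a set harmless for the estimates below'' does not help: topological covering numbers are not blind to a set of discontinuities, however thin. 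The remark about ``recording which member of a fixed finite small-diameter \emph{clopen} refinement of $U_k$'' is another symptom --- a connected $\mathcal X$ has no nontrivial clopen sets.

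This is precisely the difficulty the paper's construction is built to defuse, and its architecture is rather different from yours. The paper (following \cite{LT,MT}) does not sum independent pseudo-metrics but builds a single continuous embedding $f'\colon\mathcal X\to V$ into an infinite-dimensional Banach space by inductive refinement, and takes $d=(f')^*\|\cdot\|$. At each stage it uses (i) the continuous Voronoi tiling $\mathbb R=\bigcup_a I_{\psi_n}(x,a)$ of \S\ref{5.2}, whose interval endpoints vary continuously in $x$; (ii) a cutoff $\alpha(\mathrm{dist}(0,\partial_{\psi_n}(x)))$ interpolating between the ``tile'' map $g_{n,\lambda}\circ\pi_n$ and a ``boundary'' map $g_n'\circ\pi_n'$; and (iii) the join $P_n*Q_n$, so that the interpolated target is still a simplicial complex and the local-dimension-with-potential bookkeeping survives. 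The passage from step $n$ to step $n+1$ (Claim~\ref{num2}, via the simplicial approximations $h_\lambda$ of Lemma~\ref{lem5.3}) controls $\#(\mathcal X,(g_{n+1}\circ\pi_{n+1})^*\|\cdot\|_{N_{n+1}},\varphi_{N_{n+1}},\epsilon)$ for all $\epsilon\in[\epsilon_{n+1},\epsilon_n]$, and this cascading control is what makes one limit metric work at every scale. Your scheme, even after a continuity repair, would still need an argument that a cover adapted to level $k$ has small diameter in every $\rho_{k'}$ simultaneously; the paper's inductive coupling (Condition~\ref{con}~(6), $\|f_{n+1}-f_n\|<\min(\epsilon_n/8,\delta_n/2)$) handles the analogue of this automatically.

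The sub-additive accounting you describe --- splitting $\varphi_N$ over tiles via $\varphi_{n+m}\le\varphi_n+\varphi_m\circ T^n$ and absorbing end-blocks with $|\varphi_{n+1}-\varphi_n|<K$ --- is indeed the right idea and appears in the paper as Claims~\ref{num1} and \ref{num2}; but it only becomes a proof once the continuous tiling and join machinery are in place.
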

	\subsection{Preparations on combinatorial topology}
		In this subsection we prepare some definitions and results about simplicial complex. Recall that we have assumed that simplicial complexes are always finite (having only finitely many vertices).
		
		Let $P$ be a simplicial complex. We denote by ${\rm Ver}(P)$ the set of vertices of $P.$ For a vertex $v$ of $P$ we define the $\bf{open\ star}$ $O_{P}(v)$ as the union of open simplexes of $P$ one of whose vertex is $v$.  Here $\{v\}$ itself is an open simplex. So $O_{P}(v)$ is an open neighborhood of $v$, and $\{O_{P}(v)\}_{v\in {\rm Ver}(P)}$ forms an open cover of $P.$ For a simplex $\Delta\subset P$ we set $O_{P}(\Delta)=\bigcup_{v\in {\rm Ver}(\Delta)O_{P}(v)}.$
\begin{defn}
	Let $P$ and $Q$ be simplicial complexes. A map $f: P\rightarrow Q$ is said to be ${simplicial}$ if for every simplex $\Delta\subset P$ the image $f(\Delta)$ is a simplex in $Q$ and
	\begin{equation*}
	f(\sum_{v\in {\rm Ver}(\Delta)}\lambda_{v}v)=\sum_{v\in {\rm Ver}(\Delta)}\lambda_{v}f(v),
	\end{equation*}
	where $0\leq \lambda_{v}\leq 1 $ and $\sum_{v\in {\rm Ver}(\Delta)}\lambda_{v}= 1.$	
\end{defn}
\begin{defn}
	Let $V$	be a real vector. A map $f:P\rightarrow V$ is said to be ${linear}$ if for every simplex $\Delta\subset P$
	\begin{equation*}
	f(\sum_{v\in {\rm Ver}(\Delta)}\lambda_{v}v)=\sum_{v\in {\rm Ver}(\Delta)}\lambda_{v}f(v),
	\end{equation*}
	where $0\leq \lambda_{v}\leq 1 $ and $\sum_{v\in {\rm Ver}(\Delta)}\lambda_{v}= 1.$	
\end{defn}
We denote the space of linear maps $f:\ P\rightarrow V$ by Hom$(P,V).$ When $V$ is a Banach space, the space Hom$(P,V)$ is topologized as a product space $V^{{\rm Ver}(P)}.$
\begin{lem}\label{key}\cite{LT}
	Let $(V, ||\cdot||)$ ba a Banach space and $P$ a simplicial complex.\\
	(1) If $f: P\rightarrow V$ is a linear map with $diam f(P)\leq 2$ then for any $0< \epsilon \leq 1$
	\begin{equation*}
	\#(f(P),||\cdot||,\epsilon)\leq C(P)\cdot(1/\epsilon)^{dim P}.
	\end{equation*}
	Here the left-hand side is the minimum cardinality of open covers $\mathcal{U}$ of $f(P)$ satisfying $diam\ U< \epsilon$ for all $U\in \mathcal{U}$. $C(P)$ is a positive constant depending only on $dim P$ and the number of somplexes of $P.$\\
	(2) Suppose $V$ is infinite dimensional. Then the set
	\begin{equation}
	\{f\in Hom(P,V)|  f\ is\ \text{injective}\}\label{xiaoming}
	\end{equation}
	is dense in Hom(P,V).\\
	(3) Let $(\mathcal{X}, {d})$ be a compact metric space and $\epsilon,\  \delta> 0.$ Let $\pi:\ \mathcal{X}\rightarrow P$ be a continuous map satisfying $diam \pi^{-1}(O_{P}(v))< \epsilon$ for all $v\in {\rm Ver}(P).$  Let $\pi:\ \mathcal{X}\rightarrow V$ be a continuous map such that
	\[
	{{d}} (x,y)<\epsilon   \Longrightarrow ||f(x)-f(y)||<\delta.
	\]
	Then there exists a linear map $g:\ P\rightarrow V$ satisfying
	\begin{equation*}
	||f(x)-g(\pi(x))||< \delta
	\end{equation*}
	for all $x\in \mathcal{X}.$ Moreover if $f(\mathcal{X})$ is contained in the open unit ball $B_{1}^{o}(V)$ then we can assume $g(P)\subset B_{1}^{o}(V).$
\end{lem}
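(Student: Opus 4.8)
The plan is to establish the three assertions of Lemma~\ref{key} separately, each by an elementary argument.

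\textbf{Part (1).} Write $P$ as the union of its $M$ closed simplexes $\Delta_1,\dots,\Delta_M$; here $M$ and $\dim P$ are the only combinatorial data that will enter the constant. Since $f$ is linear, each $f(\Delta_j)$ lies in the affine subspace $\operatorname{aff}f(\Delta_j)$ spanned by the images of the $\dim\Delta_j+1$ vertices of $\Delta_j$, hence in a normed space of dimension $\le\dim\Delta_j\le\dim P$, and ${\rm diam}f(\Delta_j)\le{\rm diam}f(P)\le 2$, so $f(\Delta_j)$ sits inside a norm ball of radius $2$ there. The usual volume-packing estimate then applies: a maximal $(\epsilon/3)$-separated subset of a radius-$2$ ball in a $d$-dimensional normed space has at most $(12/\epsilon+1)^{d}\le 13^{d}(1/\epsilon)^{d}$ points for $0<\epsilon\le 1$, and the concentric balls of radius $\epsilon/3$ (of diameter $<\epsilon$) cover it, so $\#(f(\Delta_j),\|\cdot\|,\epsilon)\le 13^{\dim P}(1/\epsilon)^{\dim P}$. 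Taking the union of these covers over $j$ yields $\#(f(P),\|\cdot\|,\epsilon)\le M\cdot 13^{\dim P}(1/\epsilon)^{\dim P}$, so one may set $C(P)=M\cdot 13^{\dim P}$.

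\textbf{Part (2).} The key point is a sufficient criterion for injectivity: if $\{f(v):v\in{\rm Ver}(P)\}$ is an affinely independent subset of $V$, then $f$ is injective, because for barycentric-coordinate vectors $(\lambda_v),(\mu_v)$ of two points of $P$, the identity $f(\sum_v\lambda_v v)=f(\sum_v\mu_v v)$ forces $\sum_v(\lambda_v-\mu_v)f(v)=0$ with $\sum_v(\lambda_v-\mu_v)=0$, hence $\lambda_v=\mu_v$ for every $v$. So it suffices to show that the set of $f\in\operatorname{Hom}(P,V)$ with affinely independent vertex images is dense in $\operatorname{Hom}(P,V)=V^{{\rm Ver}(P)}$. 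I would enumerate ${\rm Ver}(P)=\{v_1,\dots,v_k\}$ and perturb one vertex at a time: once $f(v_1),\dots,f(v_{j-1})$ have been moved slightly so as to be affinely independent, their affine hull is a proper finite-dimensional — hence closed and nowhere dense, as $\dim V=\infty$ — affine subspace of $V$, so an arbitrarily small perturbation of $f(v_j)$ can be chosen off it; after $k$ steps one obtains an injective element of $\operatorname{Hom}(P,V)$ arbitrarily close to the given one.

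\textbf{Part (3).} For each vertex $v$ with $\pi^{-1}(O_P(v))\neq\emptyset$ choose a point $x_v\in\pi^{-1}(O_P(v))$ and put $g(v)=f(x_v)$; for the remaining vertices (which never belong to the carrier simplex of any $\pi(x)$) set $g(v)=0$. Extend $g$ linearly to $g:P\to V$. Given $x\in\mathcal{X}$, let $v_1,\dots,v_m$ be the vertices of the carrier simplex of $\pi(x)$, with barycentric coordinates $\lambda_1,\dots,\lambda_m$; then $\pi(x)\in O_P(v_i)$, so $x$ and $x_{v_i}$ both lie in $\pi^{-1}(O_P(v_i))$, whence $d(x,x_{v_i})<\epsilon$ and therefore $\|f(x)-f(x_{v_i})\|<\delta$. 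Consequently
\[
\|f(x)-g(\pi(x))\|=\Bigl\|\sum_{i=1}^{m}\lambda_i\bigl(f(x)-f(x_{v_i})\bigr)\Bigr\|\le\max_{1\le i\le m}\|f(x)-f(x_{v_i})\|<\delta .
\]
If in addition $f(\mathcal{X})\subset B_{1}^{o}(V)$, then every $g(v)$ lies in the convex set $B_{1}^{o}(V)$, hence so does $g(\pi(x))$ for each $x$, and thus $g(P)\subset B_{1}^{o}(V)$.

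The arguments are individually short, so the main effort is organizational: in (1), making sure the covering constant depends only on $\dim P$ and the number of simplexes; in (2), isolating the correct injectivity criterion and using that finite-dimensional subspaces are nowhere dense in an infinite-dimensional Banach space; and in (3), disposing of the harmless vertices whose open star is missed by $\pi$. The step needing the most care is (2), where one must check that affine independence of the vertex images yields injectivity of $f$ across all simplexes of $P$ simultaneously.
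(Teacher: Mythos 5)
Your proof is correct; the paper itself gives no argument for this lemma (it is quoted from Lindenstrauss--Tsukamoto \cite{LT}), and your three arguments — the per-simplex volume-covering bound, the vertex-by-vertex general-position perturbation using that finite-dimensional affine subspaces are nowhere dense in an infinite-dimensional Banach space, and the vertex-sampling construction $g(v)=f(x_v)$ with the convexity of $B_{1}^{o}(V)$ — are essentially the standard ones behind the cited result.
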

 \begin{defn}
	Let $f:\ \mathcal{X}\rightarrow P$ be a continuous map from a topological space $\mathcal{X}$ to a simplicial complex $P.$  It is said to be essential if there is no proper subcomplex of $P$ containing $f(\mathcal{X})$. This is equivalent to the condition that for any simplex $\Delta \subset P$
	\begin{equation*}
	\bigcap_{v\in {\rm Ver}(\Delta)}f^{-1}(O_{P}(v))\neq \emptyset.
	\end{equation*}
\end{defn}
	\begin{lem}\label{lem4}\cite{LT}
	Let $f:\ \mathcal{X}\rightarrow P$ be a continuous map from a topological space $\mathcal{X}$ to a simplicial complex $P.$ There exists a subcomplex $P'\subset P$ such that $f(\mathcal{X})\subset P'$ and $f:\ \mathcal{X}\rightarrow P'$ is essential.
\end{lem}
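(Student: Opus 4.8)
The plan is to take $P'$ to be the smallest subcomplex of $P$ containing the image. First I would form the family $\mathcal{S}=\{Q : Q\text{ is a subcomplex of }P\text{ and }f(\mathcal{X})\subset Q\}$. It is nonempty, since $P$ itself lies in it; moreover an arbitrary intersection of subcomplexes of $P$ is again a subcomplex, so $P':=\bigcap_{Q\in\mathcal{S}}Q$ is a subcomplex, and $f(\mathcal{X})$, being contained in each $Q\in\mathcal{S}$, is contained in $P'$, whence $P'\in\mathcal{S}$ and $P'\subset Q$ for every $Q\in\mathcal{S}$. (Since $P$ is finite one could instead simply note that the poset $\mathcal{S}$ ordered by inclusion is finite and nonempty and pick a minimal element.)

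Next I would check that $f\colon\mathcal{X}\rightarrow P'$ is essential in the sense of the definition above, i.e.\ that no proper subcomplex of $P'$ contains $f(\mathcal{X})$. This is immediate from minimality: if $Q'\subsetneq P'$ were a subcomplex with $f(\mathcal{X})\subset Q'$, then $Q'\in\mathcal{S}$, so by the choice of $P'$ we would have $P'\subset Q'$, contradicting $Q'\subsetneq P'$. Combined with $f(\mathcal{X})\subset P'$, this gives the lemma. If one prefers a description of $P'$ that does not use finiteness of $P$, one may take $P'=\bigcup_{x\in\mathcal{X}}\overline{\sigma_x}$, where $\sigma_x$ denotes the carrier of $f(x)$, namely the unique open simplex of $P$ containing $f(x)$: this set is a subcomplex because a closed simplex contains all of its faces, it plainly contains $f(\mathcal{X})$, and any subcomplex $Q'$ with $f(\mathcal{X})\subset Q'$ must contain every carrier $\sigma_x$ (for $f(x)\in Q'$ lies in some simplex $\Delta$ of $Q'$, and since $\sigma_x$ is the carrier of $f(x)$ it is a face of $\Delta$, hence $\sigma_x\in Q'$), and therefore, being closed under faces, must be all of $P'$.

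I do not foresee a genuine obstacle here: the statement is essentially formal, and indeed continuity of $f$ plays no role. The only points that deserve a line of justification are the elementary facts that arbitrary intersections of subcomplexes are subcomplexes and that every point of a simplicial complex has a well-defined carrier which is a face of every simplex containing that point; the degenerate case $\mathcal{X}=\emptyset$ is handled by letting $P'$ be the empty subcomplex, which is essential vacuously.
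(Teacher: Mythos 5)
Your proof is correct: taking $P'$ to be the minimal subcomplex of $P$ containing $f(\mathcal{X})$ (equivalently, the union of the closed carrier simplexes of the points $f(x)$) and invoking minimality to rule out any proper subcomplex of $P'$ containing the image is exactly the standard argument, and it matches the definition of essentiality used here. The paper itself gives no proof of this lemma, merely citing \cite{LT}, where the argument is the same minimal-subcomplex construction, so there is nothing further to reconcile.
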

For two open covers $\mathcal{U}$ and $\mathcal{V}$ of $\mathcal{X}$, we say that $\mathcal{V}$ is refinement of $\mathcal{U}$ (denoted by $\mathcal{U}\prec \mathcal{V}$) if for every $V\in \mathcal{V}$ there exists $U\in \mathcal{U}$ containing $V$.
\begin{lem}\label{lem5.3}\cite{LT}
	Let $\mathcal{X}$ be a topological space, $P$ and $Q$ simplicial complexes. Let $\pi:\ \mathcal{X}\rightarrow P$
	and $q_{i}:\ \mathcal{X}\rightarrow Q$ $(1\leq i\leq N)$ be continuous maps. We suppose that $\pi$ is essential and satisfies for all $1\leq i\leq N$
	\begin{equation*}
	\{q_{i}^{-1}(O_{Q}(w))\}_{w\in {\rm Ver}(Q)}\prec \{\pi^{-1}(O_{P}(v))\}_{v\in {\rm Ver}(P)}\ (\text{as open covers of}\ \mathcal{X}).
	\end{equation*}
	Then there exist simplicial maps $h_{i}:\ P\rightarrow Q$ $(1\leq i\leq N)$ satisfying the following three conditions.\\
	(1) For all $1\leq i\leq N$ and $x\in \mathcal{X}$ the two points $q_{i}(x)$ and $h_{i}(\pi(x))$ belong to the same complex of $Q$.\\
	(2) Let $1\leq i\leq N$ and let $Q' \subset Q$ be a subcomplex. If a simplex $\Delta \subset P$ satisfies $\pi^{-1}(O_{P}(\Delta))\subset q_{i}^{-1}(Q')$ then $h_{i}(\Delta)\subset Q'$.\\
	(3) Let $\Delta \subset P$ be a simplex. If $q_{i}=q_{j}$ on $\pi^{-1}(O_{P}(\Delta))$ then $h_{i}=h_{j}$ on $\Delta$.
\end{lem}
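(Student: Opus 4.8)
The plan is to construct each $h_i$ by hand from the combinatorial data, choosing its vertex values canonically and then verifying (1)--(3) by repeated use of the essentiality of $\pi$ together with the carrier of a point in a simplicial complex (the unique simplex whose relative interior contains the point). Fix once and for all a total order on ${\rm Ver}(Q)$, and for each $i$ and each $v\in{\rm Ver}(P)$ set
\[
W_i(v)=\bigl\{\,w\in{\rm Ver}(Q)\ :\ \pi^{-1}(O_P(v))\subseteq q_i^{-1}(O_Q(w))\,\bigr\}.
\]
The refinement hypothesis says precisely that every member $\pi^{-1}(O_P(v))$ of the cover $\{\pi^{-1}(O_P(v))\}_v$ lies inside some member $q_i^{-1}(O_Q(w))$ of the cover $\{q_i^{-1}(O_Q(w))\}_w$, that is, $W_i(v)\neq\emptyset$. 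I would then define $h_i(v)=\min W_i(v)$ and extend $h_i$ linearly over each simplex of $P$.

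The first thing to check is that this vertex assignment does define a simplicial map. Let $\Delta=\{v_0,\dots,v_k\}$ be a simplex of $P$. Since $\pi$ is essential, $\bigcap_l\pi^{-1}(O_P(v_l))\neq\emptyset$; pick $x$ in this intersection. Then $x\in\pi^{-1}(O_P(v_l))\subseteq q_i^{-1}(O_Q(h_i(v_l)))$ for every $l$, so each $h_i(v_l)$ is a vertex of the carrier $\tau$ of $q_i(x)$. Hence $\{h_i(v_0),\dots,h_i(v_k)\}$ spans a face of $\tau$, which is exactly what is needed for $h_i$ to be simplicial; the same observation shows $h_i(\Delta)$ is a face of $\tau$.

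Conditions (1) and (2) follow from the same carrier bookkeeping. For (1), given $x\in\mathcal{X}$ let $\Delta$ be the carrier of $\pi(x)$ in $P$ and $\tau$ the carrier of $q_i(x)$ in $Q$; for each $v\in{\rm Ver}(\Delta)$ one has $x\in\pi^{-1}(O_P(v))\subseteq q_i^{-1}(O_Q(h_i(v)))$, so $h_i(v)\in{\rm Ver}(\tau)$, and therefore $h_i(\pi(x))$ — which lies in the face of $\tau$ spanned by those vertices — belongs to $\tau$, as does $q_i(x)$. For (2), since $\pi^{-1}(O_P(\Delta))=\bigcup_{v\in{\rm Ver}(\Delta)}\pi^{-1}(O_P(v))$, the hypothesis $\pi^{-1}(O_P(\Delta))\subseteq q_i^{-1}(Q')$ gives $\pi^{-1}(O_P(v))\subseteq q_i^{-1}(Q')$ for every $v\in{\rm Ver}(\Delta)$; choosing $x\in\bigcap_{v\in{\rm Ver}(\Delta)}\pi^{-1}(O_P(v))$ (nonempty by essentiality), $q_i(x)\in Q'$, so its carrier $\tau$ is a simplex of $Q'$, and as above every $h_i(v)$ with $v\in{\rm Ver}(\Delta)$ is a vertex of $\tau$; hence $h_i(\Delta)$, a face of $\tau$, lies in $Q'$.

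The subtle point — and the one I expect to be the real obstacle — is (3), which requires the choice inside $W_i(v)$ to be blind to any global discrepancy between $q_i$ and $q_j$ and to respond only to their behaviour near $v$. Here it is essential that $O_P(\Delta)$ is the \emph{union}, not the intersection, of the open stars $O_P(v)$ with $v\in{\rm Ver}(\Delta)$: if $q_i=q_j$ on $\pi^{-1}(O_P(\Delta))$ then $q_i=q_j$ on each $\pi^{-1}(O_P(v))$ with $v\in{\rm Ver}(\Delta)$, and since $W_i(v)$ depends only on the restriction of $q_i$ to $\pi^{-1}(O_P(v))$ we conclude $W_i(v)=W_j(v)$, hence $h_i(v)=\min W_i(v)=\min W_j(v)=h_j(v)$ for all $v\in{\rm Ver}(\Delta)$; as $h_i$ and $h_j$ are linear on $\Delta$ and agree on its vertices, they coincide on $\Delta$. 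Thus the heart of the argument is to reconcile two opposing requirements — (1)--(2) tie $h_i$ to $q_i$ through membership in $W_i(v)$ and essentiality, whereas (3) forces the selection inside $W_i(v)$ to depend only on local data — and fixing a total order on ${\rm Ver}(Q)$ and always taking the minimum resolves exactly this tension.
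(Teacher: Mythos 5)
Your construction is correct and is essentially the standard one: the paper gives no proof of this lemma (it is quoted from \cite{LT}), and the argument there proceeds exactly as you do, defining $h_{i}$ on vertices by a canonical choice of $w$ with $\pi^{-1}(O_{P}(v))\subset q_{i}^{-1}(O_{Q}(w))$ made so that it depends only on $q_{i}|_{\pi^{-1}(O_{P}(v))}$, and then verifying simpliciality and conditions (1)--(3) via essentiality of $\pi$ and carriers of points. Your fixed total order on ${\rm Ver}(Q)$ is a clean way to enforce the locality needed for (3), and the rest of your verification is sound.
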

\subsection{Dynamical tiling construction}\label{5.2}
The purpose of this subsection is to define a "dynamical decomposition" of the real line, which was first introduced in \cite{GLT16}. This will be the basis of the construction in the proof of Theorem 1.8.

Let $(\mathcal{X}, T)$ be a dynamical system and $\psi:\ \mathcal{X}\rightarrow [0,1]$ a continuous function. Take $x\in \mathcal{X}$. We consider
\begin{equation}\label{xiaohua}
\{(a, \frac{1}{\psi(T^{a}x)})| a\in \mathbb{Z} \ \text{with} \ \psi(T^{a}x)> 0\}.
\end{equation}
This is a discrete subset of the plane. We assume that (\ref{xiaohua}) is nonempty for every $x\in \mathcal{X}$.
Namely for every $x\in \mathcal{X}$ there exists a $a\in \mathbb{Z}$ with $\psi(T^{a}x)> 0$. Let $\mathbb{R}^{2}=\bigcup_{a\in \mathbb{Z}}V_{\psi}(x, a)$ be the associated $\bf{Voronoi\ diagram},$ where $V_{\psi}(x,a)$ is the (convex) set of $u\in \mathbb{R}^{2}$ satisfying
\begin{equation*}
|u-(a,\frac{1}{\psi(T^{a}x)})|\leq |u-(b, \frac{1}{\psi(T^{b}x)})|
\end{equation*}
for any $b\in \mathbb{Z}$ with $\psi(T^{b}x)> 0.$ (If $\psi(T^{a}x)=0$ then $V_{\psi}(x,a)$ is empty.) We set
\begin{equation*}
I_{\psi}(x,a)= V_{\psi}(x,a)\cap (\mathbb{R}\times \{0\}).
\end{equation*}
See Figure in \cite{LT}.
We naturally identity $\mathbb{R}\times \{0\}$ with $\mathbb{R}.$ This provides a decomposition of $\mathbb{R}:$
\begin{equation*}
\mathbb{R}= \bigcup_{a\in \mathbb{Z}}I_{\psi}(x,a).
\end{equation*}
We set
\begin{equation*}
\partial_{\psi}(x)=\bigcup_{a\in \mathbb{Z}}\partial I_{\psi}(x,a)\subset \mathbb{R},
\end{equation*}
where $\partial I_{\psi}(x,a)$ is the boundary of $I_{\psi}(x,a)$ (e.g. $\partial[0,1]=\{0,1\}$). This construction is equivariant:
\begin{equation*}
I_{\psi}(T^{n}x, a)= -n+I_{\psi}(x, a+n),\  \partial_{\psi}(T^{n}x)=-n+\partial_{\psi}(x).
\end{equation*}
Recall that a dynamical system $(\mathcal{X}, T)$ is said to satisfy the marker property if for every $N>0$
there exists an open set $U\subset \mathcal{X}$ satisfying
\begin{equation}\label{xiaohong}
U\cap T^{-n}U=\emptyset\ (1\leq n\leq N), \ \mathcal{X}=\bigcup_{n\in \mathbb{Z}}T^{-n}U.
\end{equation}
\begin{lem}\label{tiling}\cite{LT}
	Suppose $(\mathcal{X}, T)$ satisfies the marker property. Then for any $\epsilon > 0$ we can find a continuous function $\psi:\ \mathcal{X}\rightarrow [0,1]$ such that (\ref{xiaohua}) is nonempty for every $x\in \mathcal{X}$
	and that it satisfies that following two conditions.\\
	(1) There exists $M > 0$ such that $I_{\psi}(x,a)\subset (a-M, a+M)$ for all $x\in \mathcal{X}$ and $a\in \mathbb{Z}$.  The intervals $I_{\psi}(x,a)$ depend continuously on $x\in \mathcal{X}$, namely if $I_{\psi}(x,a)$
	has positive length and if $x_{k}\rightarrow x$ in $\mathcal{X}$ then $I_{\psi}(x_{k},a)$ converges to $I_{\psi}(x,a)$ in the Hausdorff topology.
	(2) The sets $\partial_{\psi}(x)$ are sufficiently "sparse" in the sense that
	\begin{equation}
	\lim_{R\rightarrow \infty}\frac{\sup_{x\in \mathcal{X}}|\partial_{\psi}(x)\cap [0, R]|}{R}< \epsilon.
	\end{equation}
	Here $|\partial_{\psi}(x)\cap [0, R]|$ is the cardinality of $\partial_{\psi}(x)\cap [0, R].$
\end{lem}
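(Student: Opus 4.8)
The plan is to produce $\psi$ from a single marker set, after first \emph{shrinking} that set so that $\psi$ may be taken identically equal to $1$ on a smaller piece that every orbit still meets with bounded gaps. Given $\epsilon>0$, I would fix $N\in\mathbb{N}$ with $2/(N+1)<\epsilon$ and use the marker property to get an open $U\subset\mathcal{X}$ with $\mathcal{X}=\bigcup_{n\in\mathbb{Z}}T^{-n}U$ and $U\cap T^{-n}U=\emptyset$ for $1\le n\le N$; the latter forces consecutive visits of any orbit to $U$ to be at least $N+1$ apart. Since $\{T^{-n}U\}_{n}$ is an open cover of the compact space $\mathcal{X}$, I take a finite subcover $\mathcal{X}=\bigcup_{j=1}^{k}T^{-n_{j}}U$, apply the shrinking lemma for finite open covers of a normal space to get open sets $W_{j}$ with $\overline{W_{j}}\subset T^{-n_{j}}U$ still covering $\mathcal{X}$, and set $U':=\bigcup_{j=1}^{k}T^{n_{j}}W_{j}$. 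One checks $\overline{U'}\subset U$ and, from $W_{j}\subset T^{-n_{j}}U'$, that $\{T^{-n}U'\}_{n}$ still covers $\mathcal{X}$. Then $c_{0}:=d(\overline{U'},\mathcal{X}\setminus U)>0$, and I would take
\[
\psi(x)=\min\!\big(1,\,c_{0}^{-1}d(x,\mathcal{X}\setminus U)\big),
\]
a continuous $[0,1]$-valued function with $\{\psi>0\}=U$ and $\psi\equiv1$ on $U'$.

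With this $\psi$, $(\ref{xiaohua})$ is nonempty since every orbit meets $U=\{\psi>0\}$. By compactness applied to $\{T^{-n}U'\}_{n}$ there is $M_{0}>0$ such that every window of length $M_{0}$ in any orbit contains a time $a$ with $T^{a}x\in U'$, i.e.\ $h_{a}:=1/\psi(T^{a}x)=1$; since $h_{b}\ge1$ always, a short estimate on the defining parabolas $(u-b)^{2}+h_{b}^{2}$ then shows that any $u$ lying in the Voronoi cell of $(a,h_{a})$ satisfies $|u-a|\le M_{0}$, which is condition (1) with $M=M_{0}$ (for times $a$ with $\psi(T^{a}x)=0$ the cell is empty). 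For the continuity assertion, only the finitely many configuration points with index within $M_{0}$ of $a$ affect $I_{\psi}(x,a)$, these vary continuously on the open set where $\psi(T^{b}\cdot)>0$, and a point entering or leaving this set does so with $h_{b}\to\infty$ and hence cannot influence a cell of positive length near it; a routine compactness argument then gives Hausdorff convergence $I_{\psi}(x_{k},a)\to I_{\psi}(x,a)$. Finally, for (2): by (1) every boundary point of some $I_{\psi}(x,a)$ in $[0,R]$ arises from an index $a\in[-M,R+M]$ at which the orbit visits $U$, so these indices are $\ge N+1$ apart, giving $|\partial_{\psi}(x)\cap[0,R]|\le 2\big(\tfrac{R+2M}{N+1}+1\big)$ uniformly in $x$; subadditivity of $R\mapsto\sup_{x}|\partial_{\psi}(x)\cap[0,R]|$, which follows from the equivariance $\partial_{\psi}(T^{n}x)=-n+\partial_{\psi}(x)$, makes the limit in (2) exist and be at most $2/(N+1)<\epsilon$.

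I expect the main obstacle to be the shrinking step and the reason it is needed: a $\psi$ that is merely positive throughout $U$ is useless, because $h_{a}=1/\psi(T^{a}x)$ blows up as the orbit point approaches $\partial U$, making Voronoi cells arbitrarily wide and destroying (1); passing to $U'$ with $\overline{U'}\subset U$ while keeping $\{T^{-n}U'\}_{n}$ a cover is exactly what provides the value $1$ (height $1$) of $\psi$ along a bounded-gap subset of every orbit, after which everything reduces to elementary plane geometry together with the uniform bounds $N+1\le(\text{consecutive }U'\text{-visit gap})\le M_{0}$. The remaining technical care is in the continuity of the Voronoi cells under points entering or leaving the point configuration, and in fixing $N$ large at the outset so that the boundary density $2/(N+1)$ clears the prescribed $\epsilon$.
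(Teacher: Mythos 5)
Your construction is correct and is essentially the standard one: the paper itself gives no proof of this lemma (it is quoted from \cite{LT}, going back to the tiling construction of \cite{GLT16}), and the argument there likewise builds $\psi$ from a marker set $U$ with separation constant $N$, shrunk so that $\psi\equiv 1$ on a smaller set whose translates still cover, so that every orbit contains height-$1$ configuration points with bounded gaps; your parabola estimate for (1), the $U$-visit separation bound giving the density $2/(N+1)<\epsilon$ in (2), and the equivariance/subadditivity argument for the existence of the limit are exactly the right steps. Only two small points to tidy: the estimate gives $I_{\psi}(x,a)\subset[a-M_{0},a+M_{0}]$, so take $M=M_{0}+1$ to get the open interval; and in the continuity part the relevant indices are those with $|b-a|$ bounded by roughly $2M_{0}$ rather than $M_{0}$, and the Hausdorff convergence should be written out by expressing the cell as the interval cut out by finitely many half-line constraints whose endpoints depend continuously on $x$, while constraints coming from newly appearing configuration points become vacuous on the relevant bounded region because their heights tend to infinity --- which is precisely the mechanism you indicate.
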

\subsection{Proof of Theorem \ref{le}}
Theorem \ref{aa} follows from following theorem. For a topological space $\mathcal{X}$ and a Banach space $(V, \|\cdot\|)$  we denote by $C(X, V)$ the  space of the continuous maps $f: \mathcal{X} \rightarrow V$ endowed the norm topology (i.e., the topology given by the metric $\sup_{x\in \mathcal{X}}\|f(x)-g(x)\|$). For convenience, we also give proof of Theorem \ref{le}.
\begin{theorem}\label{le}
	Let $(\mathcal{X}, T)$ be a dynamical system with a sub-additive potential $\mathcal{F}=\left\lbrace  \varphi_{n}\right\rbrace_{n=1}^{\infty} $, and let $(V, \left\| \cdot\right\| )$ be an infinite dimension Banach space. Suppose $(\mathcal{X}, T)$  has the marker property and  there exists $K>0$ such that $|\varphi_{n+1}-\varphi_{n}|<K$ for every $n$. Then for a dense subset $f\in C(\mathcal{X}, V)$, $f$ is a topological embedding and satisfies
	$${                                                                           \overline{{\rm mdim_{M}}}}(\mathcal{X}, T, f^{*}\left\| \cdot\right\|, \mathcal{F} )={\rm mdim}(\mathcal{X}, T, \mathcal{F}).$$
	Here $f^{*}\left\| \cdot \right\| $ is the metric $\left\| f(x)-f(y)\right\| ~(x, y\in \mathcal{X})$.
\end{theorem}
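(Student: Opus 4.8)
\emph{Strategy.} It suffices to show that, inside an arbitrarily small $\sup$-norm neighbourhood of any prescribed $f_{0}\in C(\mathcal{X},V)$, there is a map $f$ which is a topological embedding and satisfies $\overline{{\rm mdim_M}}(\mathcal{X},T,f^{\ast}\|\cdot\|,\mathcal{F})={\rm mdim}(\mathcal{X},T,\mathcal{F})$; such $f$ then form the required dense subset. I would construct $f$ as a uniform limit of maps that factor through finite simplicial complexes, deriving $\le$ from the low dimensionality of those complexes and $\ge$ from their essentiality, and obtaining injectivity of $f$ in the limit from the fact that the factoring maps are $\epsilon$-embeddings with $\epsilon\to0$. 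The whole scheme is the sub-additive adaptation of the dynamical Pontrjagin--Schnirelmann argument of \cite{MT} and \cite{LT}.

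\emph{Data at a sequence of scales.} Fix a slowly decreasing sequence $\epsilon_{k}\downarrow 0$ (say $\log(1/\epsilon_{k+1})\le 2\log(1/\epsilon_{k})$) and $s_{k}\downarrow{\rm mdim}(\mathcal{X},T,\mathcal{F})$ (if the latter is $+\infty$ the theorem follows from the lower bound below, so assume it finite). For each $k$, the definition of ${\rm mdim}(\mathcal{X},T,\mathcal{F})$ gives, for all large $N$, a map of $\mathcal{X}$ into a finite simplicial complex witnessing ${\rm Widim}_{\epsilon_{k}}(\mathcal{X},d_{N},\varphi_{N})\le(s_{k}+o(1))N$, which by Lemma \ref{lem4} we may take essential. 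To make the construction equivariant I would use the tiling Lemma \ref{tiling}: pick $\psi:\mathcal{X}\to[0,1]$ whose Voronoi intervals $I_{\psi}(x,a)$ have length $\le 2M$ and whose boundary sets $\partial_{\psi}(x)$ are $\epsilon_{k}$-sparse, run the static construction on each tile, and apply Lemma \ref{lem5.3} to replace the tile-wise maps by simplicial maps into a common complex compatibly with the equivariance $I_{\psi}(T^{n}x,a)=-n+I_{\psi}(x,a+n)$. The outcome is an essential map $\pi_{k}:\mathcal{X}\to P_{k}$ with ${\rm diam}\,\pi_{k}^{-1}(O_{P_{k}}(v))<\epsilon_{k}$ in $d_{N}$ for all $v$ and all large $N$, and such that, using sub-additivity of $\mathcal{F}$ along the tile seams and bounding the $O(K)$ mismatch at each of the at most $\epsilon_{k}N$ points of $\partial_{\psi}(x)\cap[0,N]$ (this is where $|\varphi_{n+1}-\varphi_{n}|\le K$ enters), the $\epsilon_{k}$-width-with-potential of $\mathcal{X}$ in $d_{N}$, weighted as in the covering-number quantity, stays $\le(s_{k}+o(1))N$.

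\emph{Construction of $f$ and the two estimates.} Starting from $f_{0}$ I would build a $\sup$-norm Cauchy sequence $f_{k}=g_{k}\circ\pi_{k}$: given $f_{k-1}$, Lemma \ref{key}(3) yields a linear $g_{k}:P_{k}\to V$ with $\|f_{k-1}-g_{k}\circ\pi_{k}\|_{\infty}<\delta_{k}$, and a small perturbation (Lemma \ref{key}(2)) makes $g_{k}$ injective on $\pi_{k}(\mathcal{X})$; choosing $\delta_{k}$ summable and tending to $0$ fast relative to $\epsilon_{k}$ and to the injectivity moduli makes $f_{k}\to f$ with $f$ near $f_{0}$, and — since each $\pi_{k}$ is an $\epsilon_{k}$-embedding and $\epsilon_{k}\to0$ — $f$ is injective, hence an embedding. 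For the upper bound, fix $k$ and $N$: because $f$ is uniformly close to $g_{k}\circ\pi_{k}$ and $g_{k}$ is linear on each tile-complex, Lemma \ref{key}(1) converts the cover $\{\pi_{k}^{-1}(O_{P_{k}}(v))\}$, tile by tile, into an open cover of $\mathcal{X}$ by sets of $f^{\ast}\|\cdot\|_{N}$-diameter $<\epsilon_{k}$, at most $C(P_{k})(1/\epsilon_{k})^{\dim P_{k}}$ of them per tile, so $\log\#(\mathcal{X},(f^{\ast}\|\cdot\|)_{N},\varphi_{N},\epsilon_{k})\le(s_{k}+o(1))N\log(1/\epsilon_{k})$ once the potential weights are incorporated as in the previous step; dividing by $N\log(1/\epsilon_{k})$, letting $N\to\infty$ and then $k\to\infty$, and passing from the sequence $(\epsilon_{k})$ to all small $\epsilon$ via its slow decay, monotonicity in $\epsilon$, and $\|\varphi_{N}\|_{\infty}\le\|\varphi_{1}\|_{\infty}+(N-1)K$, gives $\overline{{\rm mdim_M}}(\mathcal{X},T,f^{\ast}\|\cdot\|,\mathcal{F})\le{\rm mdim}(\mathcal{X},T,\mathcal{F})$. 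For the lower bound, essentiality of $\pi_{k}$ (Lemmas \ref{lem4} and \ref{lem5.3}) together with the fact that a linear image in $V$ of a $D$-dimensional complex needs at least $c\,(1/\epsilon)^{D}$ many $\epsilon$-cells forces $\log\#(\mathcal{X},(f^{\ast}\|\cdot\|)_{N},\varphi_{N},\epsilon_{k})\ge(s_{k}-o(1))N\log(1/\epsilon_{k})$ for large $N$, hence $\overline{{\rm mdim_M}}(\mathcal{X},T,f^{\ast}\|\cdot\|,\mathcal{F})\ge{\rm mdim}(\mathcal{X},T,\mathcal{F})$; equality follows.

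\emph{The main obstacle.} The genuinely new point, compared with \cite{MT}, is the bookkeeping of the sub-additive potential through the tiling: $\varphi_{N}$ does not split along the tiles but only satisfies $\varphi_{N}(x)\le\sum_{j}\varphi_{\ell_{j}}(T^{a_{j}}x)$, and only $|\varphi_{n+1}-\varphi_{n}|\le K$ is available instead of bounded distortion, so every endpoint defect must be bounded by $O(K)$ and the number of endpoints controlled by the sparseness estimate of Lemma \ref{tiling}(2), which is precisely what pushes the endpoint contributions into the $o(N\log(1/\epsilon))$ error. A secondary technical point is that $\overline{{\rm mdim_M}}$ is a $\limsup$ over $\epsilon$, so the estimate obtained along the sparse sequence $(\epsilon_{k})$ must be upgraded to every small scale, which is where the slow decay of $(\epsilon_{k})$ and the linear bound on $\|\varphi_{N}\|_{\infty}$ are used; one must also check that the perturbations rendering the $g_{k}$ injective do not spoil the covering estimates, which is ensured by taking $\delta_{k}$ small relative to $\epsilon_{k}$.
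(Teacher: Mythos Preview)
Your overall strategy for the \emph{upper bound} $\overline{{\rm mdim_M}}(\mathcal{X},T,f^{*}\|\cdot\|,\mathcal{F})\le {\rm mdim}(\mathcal{X},T,\mathcal{F})$ is essentially the paper's: build $f$ as a uniform limit of maps that factor through simplicial complexes obtained from the ${\rm Widim}$ definition, glue along a dynamical tiling via Lemma~\ref{tiling}, and use Lemma~\ref{key}(1) to count covering sets. You also correctly identify the role of $|\varphi_{n+1}-\varphi_n|\le K$: it bounds the potential mismatch at the (sparse) tile boundaries, pushing those contributions into $o(N\log(1/\epsilon))$. Two remarks on details. First, the approximants are not of the simple form $f_k=g_k\circ\pi_k$ with a single $g_k:P_k\to V$; the paper needs $N_n$ linear maps $g_{n,\lambda}:P_n\to V$ (one per time shift $\lambda\in[N_n]$) together with an auxiliary pair $(\pi'_n,g'_n)$, and $f_n$ is the tiling-interpolated combination~(\ref{5.6}). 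This is what makes $f_n$ a map $\mathcal{X}\to V$ (not $\mathcal{X}\to V^{N_n}$) while still allowing the covering estimate for $\Phi_{f_n,R}$ at \emph{all} large $R$ (Claim~\ref{num1}). Second, a single $\pi_k$ cannot be an $\epsilon_k$-embedding for $d_N$ for all large $N$; the paper fixes one $N_n$ per stage and the passage from $N_n$ to arbitrary $R$ is handled by the tiling, not by $\pi_n$ alone.

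The genuine gap is your \emph{lower bound} argument. You claim that essentiality of $\pi_k$ plus injectivity of $g_k$ forces $\log\#(\mathcal{X},(f^{*}\|\cdot\|)_N,\varphi_N,\epsilon_k)\ge (s_k-o(1))N\log(1/\epsilon_k)$ because ``a linear image in $V$ of a $D$-dimensional complex needs at least $c(1/\epsilon)^D$ many $\epsilon$-cells.'' This is false: an injective linear image of a $D$-simplex in a Banach space can be arbitrarily thin, and its $\epsilon$-covering number is governed by the geometry of $g_k$, not by $D$ alone (Lemma~\ref{key}(1) is only an \emph{upper} bound). Even granting a geometric lower bound on unweighted covers, there is no mechanism in your sketch to turn it into a lower bound on the \emph{weighted} quantity $\#(\mathcal{X},\cdot,\varphi_N,\epsilon)$. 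In the paper this direction is not proved inside Theorem~\ref{le} at all: the inequality ${\rm mdim}(\mathcal{X},T,\mathcal{F})\le \overline{{\rm mdim_M}}(\mathcal{X},T,d,\mathcal{F})$ holds for every metric and is supplied by Theorem~\ref{bound} (via the mean Hausdorff dimension), so only $\le$ needs to be established here. Drop your lower-bound paragraph and invoke Theorem~\ref{bound} instead.
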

\begin{proof}
	First we introduce some notations. For a natural number $N$ we set $[N]=\left\lbrace 0, 1, 2, \cdots, N-1\right\rbrace $. We define a norm on $V^{N}$ (the $n$-th  power of $V$) by
	$$ \left\| (x_{0}, x_{1}, \cdots, x_{N-1})\right\|_{N}=\max\left\lbrace \left\|x_{0} \right\|, \left\|x_{1} \right\|, \cdots, \left\|x_{N-1} \right\|    \right\rbrace.  $$ For simplicial complexes  $P$ and $Q$ we define their join $P*Q$ as the quotient space of $[0,1]\times P \times Q$ by the equivalence relation
	$$ (0, p, q)\sim (0, p, q'),~~(1, p, q)\sim (1,p', q),~~(p, p' \in P, q,q'\in Q).$$
	We denote the equivalence class of $(t, p, q)$ by $(1-t)p\oplus tq$. We identify $P$ and $Q$ with $\left\lbrace (0, p, *)| p\in P\right\rbrace $ and $\left\lbrace (1, *. q) \right\rbrace $ in $P*Q$ respectively. For a continuous map $f: \mathcal{X} \rightarrow V$ and $I \subset \mathbb{R}$  we define $\Phi_{f, I}(x): \mathcal{X} \rightarrow V^{I\cap \mathbb{Z}}$ by
	$$\Phi_{f, I}(x)=(f(T^{a}x))_{a\in I\cap \mathbb{Z}}.$$
	For a natural number $R$ we set $\Phi_{f, R}:=\Phi_{f , [R]}: \mathcal{X}\rightarrow V^{R}$. We denote by $\Phi_{f, R}^{*}\left\| \cdot\right\|_{R} $ the semi-metric $\left\| \Phi_{f , [R]}(x)-\Phi_{f , [R]}(y) \right\| $ on $\mathcal{X}$. For a semi-metric $d'$ on $\mathcal{X}$ and $\epsilon>0$ we define
	\begin{align*}
	\#(\mathcal{X}, d', \varphi, \epsilon)=\inf  \{ \sum_{i=1}^{n} (1/\epsilon)^{\sup_{U_{i}}\varphi}\mid  ~ \mathcal{X}=& U_{1} \cup \cdots \cup U_{n} ~\text{ is an open cover with }  \\ &  {\rm diam} ~U_{i} < \epsilon ~\text{for all}  ~1\leq i \leq n \}.
	\end{align*}
	where ${\rm diam}(U_{i}, d') $ is the  supremum of $d'(x, y)$ over $x, y \in U_{i}$. We fix a continuous function $\alpha: \mathbb{R} \rightarrow [0,1]$ such that $\alpha(t)=1$ for $t\leq 1/2$ and $\alpha(t)=0$  for $t\geq 3/4.$
	
	We can assume $D={\rm mdim}(\mathcal{X}, T, \mathcal{F})<\infty.$ Fix a metric $d$ on $\mathcal{X}$. Take an arbitrary continuous map $f: \mathcal{X} \rightarrow V$ and $\eta>0$. Our purpose is to construct a topological embedding $f': \mathcal{X} \rightarrow V$ satisfying $\left\|  f(x) -f'(x)\right\|< \eta  $ and ${\overline{{\rm mdim_{M}}}}(\mathcal{X}, T, f^{'*}\left\|\cdot \right\|, \mathcal{F} )\leq D.$ We may assume that $f(\mathcal{X})$ is contained in the open unit ball $B_{1}^{\circ}(V)$. We will inductively construct the following data for $n\geq 1$.

		\begin{itemize}
			\item [(1)] $1/2>\epsilon_{1}>\epsilon_{2}>\cdots>0$ with $\epsilon_{n+1}<\epsilon_{n}/2$ and $\eta /2>\delta_{1}>\delta_{2}>\cdots>0$ with $\delta_{n+1}<\delta_{n}/2.$
			\item [(2)] A natural number $N_{n}.$
			\item [(3)] A continuous function $\psi_{n}: \mathcal{X} \rightarrow [0,1]$ such that for every $x \in \mathcal{X}$ there exists $a \in \mathbb{Z}$ satisfying $\psi_{n}(T^{a}x)>0$. We apply the dynamical tiling construction of subsection \ref{5.2} to $\psi_{n}$ and get the decomposition $\mathbb{R}=\bigcup\limits_{a\in \mathbb{Z}}I_{\psi_{n}}(x,a)$ for each $x \in \mathcal{X}$.
			\item [(4)] $(1/ n)$-embeddings $\pi_{n}: (\mathcal{X}, d_{N_{n}}) \rightarrow P_{n}$ and $\pi'_{n}: (\mathcal{X}, d) \rightarrow Q_{n}$ with simplicial complexes $P_{n}$ and $Q_{n}$.
			\item [(5)] For each $\lambda \in [N_{n}]$, a linear map $g_{n, \lambda}: P_{n} \rightarrow B_{1}^{\circ}(V).$
			\item [(6)] A linear map $g_{n}^{'}: Q_{n} \rightarrow B_{1}^{\circ}(V).$
		\end{itemize}
 We assume the following six conditions.
\begin{con}\label{con}
	\begin{enumerate}
		\item [(1)] For each $\lambda\in [N_{n}],$ the map $g_{n, \lambda}* g_{n}^{'}(P_{n}*Q_{n}): P_{n}* Q_{n}\rightarrow B_{1}^{\circ}(V)$ is injective. For $\lambda_{1}\neq \lambda_{2}$,
		$$g_{n, \lambda_{1}}* g_{n}^{'}(P_{n}*Q_{n})\cap g_{n, \lambda_{2}}* g_{n}^{'}(P_{n}*Q_{n})=g_{n}'(Q_{n}).$$
		\item [(2)] Set $g_{n}=(g_{n,0}, g_{n,1}, \cdots, g_{n, N_{n}-1}): P_{n}\rightarrow V^{N_{n}}$. We assume that $\pi_{n}$ is essential and
		$$\sum \limits_{\Delta \subset P_{n}}\left( \dfrac{1}{\epsilon}\right) ^{\sup_{\pi_{n}^{-1}(O_{P_{n}}(\Delta))}\varphi_{N}}  \#(g_{n}(\Delta), \|\cdot\|_{N_{n}}, \epsilon)<\left( \dfrac{1}{\epsilon}\right) ^{(D+\frac{3}{n}))N_{n}},~~~(0<\epsilon\leq \epsilon_{n}).$$
		Here $\Delta$ runs overs simplexes of $P_{n}$. Since $\pi_{n}$ is essential, $\pi_{n}^{-1}(O_{P_{n}}(\Delta))$ is non-empty for every $\Delta \subset P_{n}$.
		\item[(3)] For $0<\epsilon\leq \epsilon_{n-1}$ $(n\geq 2)$,
		$$\#(\mathcal{X}, (g_{n}\circ \pi_{n})^{*}\|\cdot\|_{N_{n}},  \varphi_{N}, \epsilon)<2^{N_{n}}\left( \dfrac{1}{\epsilon}\right) )^{(D+\frac{4}{n-1}))N_{n}}.$$
		Here $(g_{n}\circ\pi_{n})^{*}\|\cdot\|_{N_{n}}$ is the semi-metric $\| g_{n}(\pi_{n}(x))-g_{n}(\pi_{n}(y))\|$ on $\mathcal{X}$.
		\item[(4)] There exists $M_{n}>0$ such that $I_{\psi_{n}}(x, a)\subset (a-M_{n}, a+M_{n})$ for all $x\in \mathcal{X}$ and $a\in \mathbb{Z}$. We take $C_{n}\geq 1$ satisfying
		\begin{align}\label{cos}
		\#\left( \bigcup\limits_{\lambda\in [N_{n}]} g_{n,\lambda}*g_{n}^{'}(P_{n}*Q_{n},\|\cdot\|,\epsilon)\right) <\left( \dfrac{1}{\epsilon}\right)^{C_{n}}~~(0<\epsilon\leq \dfrac{1}{2}).
		\end{align}
		Then we assume
		$$\lim\limits_{R\rightarrow\infty}\dfrac{\sup_{x\in \mathcal{X}}|\partial_{\psi}(x)\cap [0,R]|}{R}<\dfrac{1}{2nN_{n}(C_{n}+\| \varphi_{1}\|_{\infty}+K)}.$$
		where $\| \varphi_{1}\|_{\infty}=\max_{\mathcal{X}}| \varphi_{1}(x)|.$
		\item[(5)] We define a continuous map $f_{n}: \mathcal{X}\rightarrow B_{1}^{\circ}(V)$ as follows. Let $x\in \mathcal{X}$. Take $a\in \mathbb{Z}$ with $0\in I_{\psi_{n}}(x, a)$, and take $b\in \mathbb{Z}$ satisfying $b\equiv a({\rm mod} N_{n})$ and $0\in b+N_{n}$. We set
		\begin{align}\label{5.6}
		f_{n}(x)=\left\lbrace 1-\alpha({\rm dist}(0, \partial_{\psi_{n}}(x)))\right\rbrace g_{n, -b}(\pi_{n}((T^{b}x)))+\alpha({\rm dist}(0, \partial_{\psi_{n}}(x))) g_{n}'(\pi_{n}'(x)),
		\end{align}
		where ${\rm dist}(0, \partial_{\psi_{n}}(x))=\min_{t\in \partial_{\psi_{n}(x)}}|t|.$ Then we assume that if a continuous map $f': \mathcal{X}\rightarrow V$ satisfies $\|f(x)- f'(x)\|<\delta_{n}$ for all $x\in \mathcal{X}$ then it is a $(1/n)$-embedding with respect to $d$.
	\end{enumerate}
\end{con}		
Suppose that  we have constructed the above data. We define a continuous map $f':\mathcal{X} \rightarrow V$ by $f'(x)=\lim\limits_{n\rightarrow \infty}f_{n}(x)$. It satisfies
$\|f'(x)-f(x)\|<\eta$ and $\|f'(x)-f_{n}(x)\|<\min(\epsilon_{n}/4, \delta_{n})$ for all $n\geq 1$. Then the condition  $(5)$  implies that $f'$ is a $(1/n)$-embedding with respect to $d$ for all $n\geq 1$, which means that $f'$ is a topological embedding. We estimate
\begin{align*}
{\rm mdim}_{M}(\mathcal{X}, T, (f')^{*}\|\cdot\|, \mathcal{F})=\limsup\limits_{ \epsilon \rightarrow 0}\left\lbrace \left(\lim\limits_{R\rightarrow \infty}\dfrac{\log \# (\mathcal{X}, \Phi_{f',R}^{*}\|\cdot\|_{R},  \varphi_{R}, \epsilon)}{R} \right)/\log (1/\epsilon) \right\rbrace.
\end{align*}
Let $0<\epsilon<\epsilon_{1}$. Take $n\geq 1$ with $\epsilon_{n}<\epsilon<\epsilon_{n-1}$. From $\|f'(x)-f_{n}(x)\|<\epsilon_{n}/4$,
$$\#(\mathcal{X}, \Phi_{f',R}^{*}\|\cdot\|_{R},  \varphi_{R}, \epsilon)\leq \#(\mathcal{X}, \Phi_{f_{n},R}^{*}\|\cdot\|_{R},  \varphi_{R}, \epsilon-\dfrac{\epsilon_{n}}{2})\leq \#(\mathcal{X}, \Phi_{f_{n},R}^{*}\|\cdot\|_{R},  \varphi_{R}, \dfrac{\epsilon}{2}).$$
From Claim \ref{num1} below,
$$ \lim\limits_{R\rightarrow \infty} \dfrac{\log \# (\mathcal{X}, \Phi_{f',R}^{*}\|\cdot\|_{R},  \varphi_{R}, \epsilon)}{R} \leq 2+ (D+\dfrac{4}{n-1}+\dfrac{1}{n})\log \left( \dfrac{2}{\epsilon}\right). $$
Since $n\rightarrow \infty$ as $\epsilon\rightarrow 0$, this proves $\overline{{\rm mdim_{M}}}(\mathcal{X}, T, (f')^{*}, \|\cdot\|, \mathcal{F})$.
\begin{claim}\label{num1}
	Let $0<\epsilon<\epsilon_{n-1} ~(n\geq 2)$. If $R$ is a sufficiently large natural number then
	$$\#(\mathcal{X}, \Phi_{f_{n},R}^{*}\|\cdot\|_{R},  \varphi_{R},\epsilon)\leq 4^{R}\left( \dfrac{1}{\epsilon}\right)^{(D+\frac{4}{n-1})R+\frac{R}{n}} $$
\end{claim}
\begin{proof}
	Let $x\in \mathcal{X}$. A discrete interval $J=[b, b+N_{n})\cap \mathbb{Z}$ of length $N_{n}$ $(b \in \mathbb{Z})$ is said to be ${\bf good~ for}$ $x$ if there exists $a\in\mathbb{Z}$ such that $b\equiv a ({\rm mod} N_{n})$ and $[b-1, b+N_{n}]\subset I_{\Psi_{n}}(x, a)$. If $J$ is good for  $x$ then
	$$\Phi_{f_{n},J}(x)=g_{n}(\pi_{n}(T^{b}x))\in g_{n}(P_{n}).$$
	
	We denote by $\mathcal{J}_{x}$ the union of $J\subset [R]$ which are good for $x$. For a subset $\mathcal{J} \subset [R]$ we define $\mathcal{X}_{\mathcal{J}}$ as the set of $x\in \mathcal{X}$ satisfying $\mathcal{J}_{x}=\mathcal{J}.$ The set $\mathcal{X}_{\mathcal{J}}$ may be empty. If it is non-empty, then from Condition $\ref{con}$ (3)
	\begin{align}\label{num}
	\#(\mathcal{X}_{\mathcal{J}}, \Phi_{f_{n},R}^{*}\|\cdot\|_{R},  \varphi_{R}, \epsilon )\leq \left\lbrace 2^{N_{n}}\left( \dfrac{1}{\epsilon}\right) ^{(D+\frac{4}{n-1})N_{n}}\right\rbrace^{|\mathcal{J}|/N_{n}} \cdot \left( \dfrac{1}{\epsilon}\right)^{(C_{n}+ \| \varphi_{1}\|_{\infty}+K)|[R]\setminus \mathcal{J}|}
	\end{align}	
	Here $C_{n}$ is the positive constant introduced in (\ref{cos}). We have $|\mathcal{J}|\leq R$ and $$|[R]\setminus \mathcal{J}|\leq 2N_{n}\sup\limits_{x\in \mathcal{X}}|\partial_{\Psi_{n}(x)}\cap [0,R]|+2N_{n}.$$
	The second term $"+2N_{n}"$ in the right-hand side is the edge effect. From Condition \ref{con} (4), for sufficiently larger $R$
	$$(C_{n}+\| \varphi_{1}\|_{\infty}+K)|[R]\setminus \mathcal{J}|<\dfrac{R}{n}.$$
	Then the quantity (\ref {num}) is bounded by
	$$ 2^{R}\left( \dfrac{1}{\epsilon}\right)^{(D+\frac{4}{n-1}))R+\frac{R}{n}}.$$
	The number of the choices of $\mathcal{J} \subset [R]$ is bounded by $2^{R}$. Thus
	$$\#(\mathcal{X}, \Phi_{f_{n},R}^{*}\|\cdot\|_{R}, \varphi_{R}, \epsilon ) \leq 4^{R}\left( \dfrac{1}{\epsilon}\right)^{(D+\frac{4}{n-1})R+\frac{R}{n}}.$$
\end{proof}
{\bf Induction: Step 1.} Now we start to construct the data. First we construct them for $n=1$. By the continuity of $f$ and ${\rm mdim}(\mathcal{X}, T, \mathcal{F})=D$. Take small enough  $0<\tau_{1}<1$, there exists $N_{1}>0$, a simplicial complex $P_{1}$ and a $\pi_{1}$-embedding $\pi_{1}: (\mathcal{X}, d_{N_{1}})\rightarrow P_{1}$ such that
\begin{itemize}
	\item
	$ d(x, y)<\tau_{1}\Rightarrow ~\|f(x)-f(y)\|<\dfrac{\eta}{2}.$
	\item ${\rm dim}_{\pi_{1}(x)}P_{1}+ \varphi_{N_{1}}(x)<N_{1}(D+1)$ for all $x\in \mathcal{X}$.
	\item $\dfrac{{\rm var}_{\tau_{1}}( \varphi_{N_{1}}, d_{N_{1}})}{N_{1}}< 1,$
	where ${\rm var}_{\epsilon}(\varphi, d)=\sup \left\lbrace |\varphi(x)-\varphi(y)|, ~d(x, y)<\epsilon \right\rbrace $.
\end{itemize}

We also take a simplicial complex $Q_{1}$ and a $\tau_{1}$-embedding $\pi_{1}^{'}: (\mathcal{X},d) \rightarrow Q_{1}$. By subdividing $P_{1}$ and $Q_{1}$  if necessary, we can assume that all simplexes $\Delta \subset P_{1}$ and all $\omega \in {\rm Ver(Q_{1})}$
$${\rm diam}(\pi_{1}^{-1}(O_{P_{1}}(\Delta)), d_{N_{1}}))<\tau_{1},~~{\rm diam}((\pi_{1}^{'})^{-1}(O_{Q_{1}}(\omega)), d))<\tau_{1}.$$
Moreover by Lemma \ref{lem4} we can assume that $\pi_{1}$ is essential.
By Lemma \ref{key} (3) there exist linear maps $g_{1, \lambda}: P_{1} \rightarrow B_{1}^{\circ}(V)$ $(\lambda \in [N_{1}])$ and $g_{1}^{'}: Q_{1} \rightarrow B_{1}^{\circ}(V)$
satisfying
\begin{align}\label{5.7}
\|f(T^{\lambda}x)-g_{1, \lambda}(\pi_{1}(x))\|<\dfrac{\eta}{2},~~\| f(x)-g_{1}^{'}(\pi_{1}(x))\|<\dfrac{\eta}{2}.
\end{align}
We slightly perturb $g_{1, \lambda}$ and $g_{1}^{'}$ (if necessary ) by Lemma \ref{key} (2) so that they satisfy Condition \ref{con} (1).
By Lemma \ref{key} (1), we can choose $0<\epsilon_{1}<1/2$ such that for any $0<\epsilon\leq \epsilon_{1}$ and simplex $\Delta \subset P_{1}$
$$\#(g_{1}(\Delta), \|\cdot\|_{N_{1}}, \epsilon))<\dfrac{1}{(\text{Number of simplexes of}~ P_{1})}\left( \dfrac{1}{\epsilon} \right)^{{\rm dim}\Delta+1}. $$
Let $\Delta \subset P_{1}$ be a simplex. Since $\pi_{1}$ is essential, we can find a point $x\in \pi_{1}^{-1}(O_{P_{1}}(\Delta))$ with ${\rm dim}(\Delta)\leq  {\rm dim}_{\pi_{1}(x)}P_{1}.$ From the choice of $\tau_{1}$
$$ \sup\limits_{\pi_{1}^{-1}(O_{P_{1}}(\Delta))}  \varphi_{N_{1}}\leq  \varphi_{N_{1}}(x)+ N_{1}.$$
Hence for $0<\epsilon\leq \epsilon_{1}$
\begin{align*}
&\left( \dfrac{1}{\epsilon} \right)^{\sup_{\pi_{1}^{-1}(O_{P_{1}}(\Delta))} \varphi_{N_{1}}} \#(g_{1}(\Delta), \|\cdot\|_{N_{1}}, \epsilon)\\& <\dfrac{1}{(\text{Number of simplexes of }P_{1}) }\left( \dfrac{1}{\epsilon}\right)^{{\rm dim}(\Delta)+ \varphi_{N_{1}}+ N_{1}+1}\\&\leq \dfrac{1}{(\text{Number of simplexes of } P_{1})}\left( \dfrac{1}{\epsilon}\right)^{{\rm dim}_{\pi_{1}(x)}P_{1}+ \varphi_{N_{1}}+ N_{1}+1}
\end{align*}
From ${\rm dim}_{\pi_{1}(x)}P_{1}+ \varphi_{N_{1}}(x)<N_{1}(D+1)$, this is bounded by
\begin{align*}
&\dfrac{1}{(\text{Number of simplexes of } P_{1})}\left( \dfrac{1}{\epsilon}\right)^{N_{1}(D+1)+ N_{1}+1}\\&\leq \dfrac{1}{(\text{Number of simplexes of } P_{1})}\left( \dfrac{1}{\epsilon}\right)^{N_{1}(D+3)}
\end{align*}
This shows Condition \ref{con} (2):
$$ \sum\limits_{\Delta \subset P_{1}} \left( \dfrac{1}{\epsilon}\right)^{\sup_{\pi_{1}^{-1}(O_{P_{1}}(\Delta))}}\#(g_{1}(\Delta, \|\cdot\|_{N_{1}}, \epsilon))<\left( \dfrac{1}{\epsilon} \right)^{N_{1}(D+3)}.  $$
Condition \ref{con} (3) is empty for $n=1$.
By Lemma \ref{tiling} we can choose a continuous function $\Psi_{1}: \mathcal{X} \rightarrow [0,1]$ satisfying Condition \ref{con} (4). The continuous map $f_{1}: \mathcal{X} \rightarrow V$ defined in (\ref{5.6}) is a 1-embedding. Since "1-embedding" is  an open condition, we can choose $0<\delta_{1}<\eta/2$ such that any continuous map $f': \mathcal{X} \rightarrow V$ with $\|f'(x)- f_{1}(x)\|<\delta_{1}$ is also a 1-embedding. This establishes Condition \ref{con} (5).
From (\ref{5.7}) we get Condition \ref{con} (6): $$\|f(x)-f_{1}(x)\|<\eta/2.$$ We have completed the construction of the data for $n=1$.

{\bf Induction: Step n $ \rightarrow$ Step n+1}

Suppose we have  constructed the data for $n$. We will construct the data for $n+1$.
We subdivide the join $P_{n}*Q_{n}$ sufficiently fine (denote by $\overline{P_{n}*Q_{n}}$) such that for all simplexes $\Delta \subset \overline{P_{n}*Q_{n}}$ and all $\lambda\in [N_{n}]$
\begin{align}\label{5.9}
{\rm diam}(g_{n, \lambda}*g_{n}^{'}(\Delta), \|\cdot\|)<\min\left( \dfrac{\epsilon_{n}}{8}, \dfrac{\delta_{n}}{2}\right).
\end{align}
We define a continuous map $q_{n}:\mathcal{X} \rightarrow \overline{P_{n}*Q_{n}}$ as follows. Let $x\in \mathcal{X}$. Take $a, b \in \mathbb{Z}$ such that $0\in I_{\Psi_{n}
}(x, a)$, $a\equiv b ({\rm mod} N_{n})$ and $0\in b+[N_{n}]$. Then we set
$$q_{n}(x)=\left\lbrace 1-\alpha({\rm dist}(0, \partial_{\psi_{n}}(x))\right\rbrace\pi_{n}(T^{b}x)\oplus\alpha({\rm dist}(0, \partial_{\psi_{n}}(x))) \pi_{n}^{'}(x). $$
We have
\begin{align}\label{5.10}
f_{n}(x)=g_{n, -b}*g_{n}^{'}(q_{n}(x)).
\end{align}

Take $0<\tau_{n+1}<1/n+1$ satisfying the following four conditions.
\begin{itemize}
	\item [(i)] If $d(x,y)<\tau_{n+1}$ then $\| f_{n}(x)-f_{n}(y)\|<\min(\epsilon_{n}/8, \delta_{n}/2)$.
	\item [(ii)] If $d(x,y)<\tau_{n+1}$ then then the  decompositions of dynamical tiling  are  "close" in the following two senses.
	\begin{itemize}
		\item [$\cdot$]$|{\rm dist}(0, \partial_{\psi_{n}}(x))-{\rm dist}(0, \partial_{\psi_{n}}(y))|<\dfrac{1}{4}$
		\item [$\cdot$] If $(-1/4, 1/4)\subset I_{\Psi_{n}}(x, a)$ then $0$ is an interior point of $I_{\Psi_{n}}(y, a)$.
	\end{itemize}
	\item[(iv)] Consider the open cover $\left\lbrace q_{n}^{-1} (\overline{P_{n}*Q_{n}}(v))\right\rbrace_{v\in {\rm Ver}(\overline{P_{n}*Q_{n}})} $ of $\mathcal{X}$. The number $\tau_{n+1}$ is smaller than its Lebesgue number:
	$$ \tau_{n+1}<LN\left( \mathcal{X}, d, \left\lbrace q_{n}^{-1} (\overline{P_{n}*Q_{n}}(v))\right\rbrace_{v\in {\rm Ver}(\overline{P_{n}*Q_{n}})}\right). $$
\end{itemize}
Take a $\tau_{n+1}$-embedding $\pi_{n+1}^{'}: (\mathcal{X}, d) \rightarrow Q_{n+1}$ with a simplicial complex $Q_{n+1}$. By subdividing it, we can assume that ${\rm diam}((\pi_{1}^{'})^{-1}(O_{Q_{n+1}}(\omega), d))<\tau_{n+1}$ for all $\omega \in {\rm Ver}(Q_{n+1})$. By Lemma \ref{key} (3) there exists a linear map $\tilde{g}_{n+1}^{'}: Q_{n+1} \rightarrow B_{1}^{\circ}(V)$ satisfying
\begin{align}\label{per2}
\| f_{n}(x)-\tilde{g}_{n+1}^{'}(\pi_{n+1}(x))\|<\min\left(  \dfrac{\epsilon_{n}}{8}, \dfrac{\delta_{n}}{2}\right).
\end{align}
Take $N_{n+1}\geq N_{n}$ satisfying two conditions.
\begin{itemize}
	\item [(a)]
	There exists a $\tau_{n+1}$-embedding $\pi_{n+1}: (\mathcal{X}, d_{N_{n+1}}) \rightarrow P_{n+1}$ with a simplex complex $P_{n+1}$ such that for all $x\in \mathcal{X}$
	\begin{align}\label{cc}
	\dfrac{{\rm dim}_{\pi_{n+1}(x)}P_{n+1}+\varphi_{N_{n+1}}(x)}{N_{n+1}}< D+\dfrac{1}{n+1}.
	\end{align}
	\item [(b)]
	$$\dfrac{1+\sup\limits_{x\in \mathcal{X}}|\partial_{\psi_{n}}(x)\cap [0, N_{n+1}]|}{N_{n+1}}<\dfrac{1}{2nN_{N}(C_{n}+\| \varphi_{1}\|_{\infty}+K)},$$
	where $C_{n}$ is the positive constant introduced in (\ref{cos}).
	\item[(c)] $$\dfrac{{\rm var}_{\tau_{n+1}}( \varphi_{N_{n+1}}, d_{N_{1}})}{N_{n+1}}< \dfrac{1}{n+1}.$$
\end{itemize}

By subdividing $P_{n+1}$  if necessary, we can assume that for any simplexes $\Delta, \Delta'\subset P_{n+1}$ with $\Delta \cap \Delta' \neq \emptyset$
\begin{align}\label{pi}
{\rm diam}(\pi_{n+1}^{-1}(O_{P_{n+1}}(\Delta)\cup\pi_{n+1}^{-1}(O_{P_{n+1}}(\Delta'), d_{N_{n+1}}))<\tau_{n+1}.
\end{align}
Moreover by Lemma \ref{lem4} we can assume that $\pi_{n+1}$ is essential.

By the choice of $\tau_{n+1}$, we apply  Lemma \ref{lem5.3} to $\pi_{n+1}: \mathcal{X} \rightarrow P_{n+1}$ and $q_{n}\circ T^{\lambda}: \mathcal{X} \rightarrow \overline{P_{n}*Q_{n}} (\lambda \in [N_{n+1}]).$ Then we can get simplicial maps $h_{\lambda}: P_{n+1} \rightarrow \overline{P_{n}*Q_{n}} (\lambda \in [N_{n+1}])$ satisfying the  three condition:
\begin{itemize}
	\item [(A)] For every $\lambda \in [N_{n+1}]$ and $x\in \mathcal{X}$, the two points $h_{\lambda}(\pi_{n+1}(x)) $ and $q_{n}(T^{\lambda}x)$ belong to the same simplex of $\overline{P_{n}*Q_{n}}.$
	\item [(B)] Let $\lambda \in [N_{n+1}]$ and $\Delta \subset P_{n+1}$ be a simplex. If $\pi_{n+1}^{-1}(O_{P_{n+1}}(\Delta)\subset T^{-\lambda}q_{n}^{-1}(\overline{P_{n}}),$
	then $h_{\lambda}(\Delta)\subset \overline{P_{n}}.$ Similarly, if $\pi_{n+1}^{-1}(O_{P_{n+1}}(\Delta)\subset T^{-\lambda}q_{n}^{-1}(\overline{Q_{n}}),$, then $h_{\lambda}(\Delta)\subset \overline{Q_{n}}.$		
	\item [(C)] Let $\lambda, \lambda' \in [N_{n+1}]$ and $\Delta \subset P_{n+1}$ be a simplex. If $q_{n}\circ T^{\lambda}=q_{n}\circ T^{\lambda'}$ on $\pi_{n+1}^{-1}(O_{P_{n+1}}(\Delta)$ then $h_{\lambda}=h_{\lambda'}$ on $\Delta$.
\end{itemize}
Define a linear map $\tilde{g}_{n+1,\lambda}: P_{n+1} \rightarrow B_{1}^{\circ}(V)$ for each $\lambda \in [N_{n+1}]$ as follows. For each $\Delta \in P_{n+1}$, since $\pi_{n+1}$ is essential, we can find a point $x\in \pi_{n+1}^{-1}(O_{P_{n+1}}(\Delta))$. Take $a, b \in \mathbb{Z}$ such that $\lambda\in I_{\Psi_{n}
}(x, a)$, $a\equiv b ({\rm mod} N_{n})$ and $\lambda\in b+[N_{n}]$.

Set $$\tilde{g}_{n+1,\lambda}(u)=g_{n, \lambda-b}*g_{n}^{'}(h_{\lambda}(u)) ~~(u \in \Delta).$$

From (\ref{5.9}) and  (\ref{5.10}),
\begin{align}\label{per1}
\|\tilde{g}_{n+1,\lambda}(\pi_{n+1}(x))-f_{n}(T^{\lambda}x)\| <\min\left( \dfrac{\epsilon_{n}}{8}, \dfrac{\delta_{n}}{2}\right).
\end{align}
\begin{claim}
	The above construction of $\tilde{g}_{n+1,\lambda}$ is independent of the various choices.
\end{claim}
\begin{proof}
	see \cite{LT}.
\end{proof}
\begin{claim}\label{num2}
	Set $\tilde{g}_{n+1}=(\tilde{g}_{n+1,0},\cdots, \tilde{g}_{n+1, N_{n+1}-1}): P_{n+1} \rightarrow V^{N_{n+1}}.$ For  $0< \epsilon\leq \epsilon_{n}$
	$$\#(\mathcal{X}, (\tilde{g}_{n+1}\circ \pi_{n+1})^{*}\|\cdot\|_{N_{n+1}}, \varphi_{N_{n+1}}, \epsilon)< 2^{N_{n+1}}\left( \dfrac{1}{\epsilon} \right)^{(D+\frac{4}{n})N_{n+1}}. $$
\end{claim}
\begin{proof}
	This is close to the proof of Claim \ref{num1}. But it is  a bit more involved. Let $x\in \mathcal{X}$. We say that a discrete interval $J=[b, b+N_{n})\cap \mathbb{Z}$ of length $N_{n} (b \in \mathbb{Z})$ is good for $x$ if $J\subset [N_{n+1}]$ and there exists $a\in \mathbb{Z}$ satisfying $b\eqcirc a ({\rm mod} N_{n} )$ and $[b-1, b+N_{n}]\subset I_{\psi_{n}}(x,a)$.
	
	Suppose $J=[b, b+N_{n}) \cap \mathbb{Z}$ is good for $x\in \mathcal{X}$. Take a simplex $\Delta\subset P_{n+1}$ containing $\pi_{n+1}(x).$ Let $y\in \pi_{n+1}^{-1}(O_{P_{n+1}}(\Delta))$ be an arbitrary point. From $(\ref{pi})$ we have $d_{N_{n+1}}(x, y)<\tau_{n+1}$. From the condition (iii) of the choice of $\tau_{n+1}$,
	$$[b-\dfrac{3}{4}, b+N_{n}-\dfrac{1}{4}]\subset I_{\psi_{n}}(y ,a).$$
	Then for all $\lambda \in J$
	$$q_{n}(T^{\lambda}y)=q_{n}(T^{b}y)=\pi_{n}(T^{b}y)\in \overline{P_{n}}.$$
	From the condition $(B)$ and $(C)$ of the choice of $h_{\lambda}$,
	$$h_{b}(\Delta)\subset \overline{P_{n}},~~h_{\lambda}=h_{b} ~\text{on}~\Delta ~\text{for }~\lambda\in J.$$
	Then $$(\tilde{g}_{n+1, \lambda}(\pi_{n+1}(x)))_{\lambda\in J}=g_{n}(h_{b}(\pi_{n+1}(x))).$$
	Moreover it follows from the condition $(A)$ of the choice of $h_{\lambda}$ that $h_{b}(\pi_{n+1}(x))$ and $q_{n}(T^{b}x)=\pi_{n}(T^{b}x)$ belongs to the same simplex of $\overline{P_{n}}$.
	For $x\in \mathcal{X}$ we denote by $\mathcal{J}_{x}$ the union of the intervals $J\subset [N_{n+1}]$ good for $x$. For a subset $\mathcal{J}\subset [N_{n+1}]$ we define $\mathcal{X}_{\mathcal{J}}$ as the set of $x\in \mathcal{X}$ with $\mathcal{J}_{x}=\mathcal{J}$. The set $\mathcal{X}_{\mathcal{J}}$ may be empty. If it is non-empty, then from Condition \ref{con} (2)
	\begin{align}\label{11}
	&\#(\mathcal{X}_{\mathcal{J}}, (\tilde{g}_{n+1}\circ \pi_{n+1})^{*}\|\cdot\|_{N_{n+1}},  \varphi_{N_{n+1}}, \epsilon)\\& \nonumber <\left\lbrace \left( \dfrac{1}{\epsilon}\right) ^{(D+\frac{3}{n}))N_{n}} \right\rbrace^{|\mathcal{J}|/N_{n}}\cdot \left\lbrace \left( \dfrac{1}{\epsilon}\right) ^{C_{n}+\|\varphi_{1}\|_{\infty}+K}\right\rbrace^{|[N_{n+1}]\setminus \mathcal{J}|}.
	\end{align}
	We have $|\mathcal{J}|\leq N_{n+1}$ and
	\begin{align*}
	|[N_{n+1}]\setminus \mathcal{J}||&\leq  2N_{n}|\partial_{\psi_{n}}(x)\cap[0, N_{n+1}]|+2N_{n}\\&<\dfrac{N_{n+1}}{n(C_{n}+\|\varphi_{1}\|_{\infty}+K)} ~\text{by the condition  (b) of the choice of } N_{n+1}.
	\end{align*}
	Then the above $(\ref{11})$ is bounded by
	$$\left( \dfrac{1}{\epsilon}\right)^{(D+\frac{3}{n})N_{n+1}+\frac{N_{n+1}}{n}}= \left( \dfrac{1}{\epsilon}\right)^{(D+\frac{4}{n})N_{n+1}}.$$
	The number of the choices of $\mathcal{J}\subset [N_{n+1}]$ is bounded by $2^{N_{n+1}}$. Thus
	$$\#(\mathcal{X},(\tilde{g}_{n+1}\circ \pi_{n+1})^{*}\|\cdot\|_{N_{n+1}}, \varphi_{N_{n+1}}, \epsilon)<2^{N_{n+1}}\left( \dfrac{1}{\epsilon}\right)^{(D+\frac{4}{n})N_{n+1}}.$$
\end{proof}
From Lemma \ref{key} (1), we can  take $0<\epsilon_{n+1}<\epsilon_{n}/2$ such that for any $0<\epsilon\leq \epsilon_{n+1}$ and any linear map $g: P_{n+1} \rightarrow V^{N_{n}+1}$ with $g(P_{n+1}) \subset B_{1}^{\circ}(V)^{N_{n+1}}$
\begin{align*}
\#(g(\Delta), \|\cdot\|_{N_{n+1}}, \epsilon)<\dfrac{1}{\left( \text{Number of simplexes of } P_{n+1}\right) } \left( \dfrac{1}{\epsilon}\right)^{{\rm dim}(\Delta)+\frac{1}{n+1}}
\end{align*}
for all simplexes $\Delta \subset P_{n+1}$.

Let $g: P_{n+1}\rightarrow B_{1}^{\circ}(V)^{N_{n+1}}$ be a linear map and let $\Delta \subset P_{n+1}$ be  a simplex. Since $\pi_{n+1}$ is essential, we can find a point $x\in \pi_{n+1}^{-1}(O_{P_{n+1}})$ with  ${\rm dim}_{\pi_{n+1}(x)}P_{n+1}\geq {\rm dim}(\Delta).$ From (\ref{pi}) and the condition (ii) of the choice of $\tau_{n+1}$
$$\sup\limits_{\pi_{n+1}^{-1}(O_{P_{n+1}}(\Delta))}  \varphi_{N_{n+1}} \leq \varphi_{N_{n+1}}(x)+\dfrac{N_{n+1}}{n+1}.$$
Then for $0<\epsilon\leq \epsilon_{n+1}$
\begin{align*}
&\left( \dfrac{1}{\epsilon}\right)^{\sup\limits_{\pi_{n+1}^{-1}(O_{P_{n+1}}(\Delta))}\varphi_{N_{n+1}}}\#(g(\Delta), \|\cdot\|_{N_{n+1}}, \epsilon)\\&<\dfrac{1}{\left( \text{Number of simplexes of } P_{n+1}\right) } \left( \dfrac{1}{\epsilon}\right)^{\varphi_{N_{n+1}}(x)+{\rm dim}(\Delta)+\frac{N_{n+1}}{n+1}+\frac{1}{n+1}}\\&\leq \dfrac{1}{\left( \text{Number of simplexes of } P_{n+1}\right) } \left( \dfrac{1}{\epsilon}\right)^{\varphi_{N_{n+1}}(x)+{\rm dim}_{\pi_{n+1}(x)}P_{n+1}+\frac{N_{n+1}+1}{n+1}}\\&\leq\dfrac{1}{\left( \text{Number of simplexes of } P_{n+1}\right) } \left( \dfrac{1}{\epsilon}\right)^{(D+\frac{1}{n+1})N_{n+1}+\frac{N_{n+1}+1}{n+1}}~~~\text{by} ~(\ref{cc})\\&\leq \dfrac{1}{\left( \text{Number of simplexes of } P_{n+1}\right)}\left(\dfrac{1}{\epsilon} \right)^{(D+\frac{3}{n+1})N_{n+1}}.
\end{align*}
Hence for any $0<\epsilon<\epsilon_{n+1}$ and any linear map $g: P_{n+1}\rightarrow B_{1}^{\circ}(V)^{N_{n+1}}$
\begin{align}\label{w}
\sum\limits_{\Delta \subset P_{n+1}}\left( \dfrac{1}{\epsilon}\right)^{\sup\limits_{\pi_{n+1}^{-1}(O_{P_{n+1}}(\Delta))}\varphi_{N_{n+1}} }\#(g(\Delta), \|\cdot\|_{N_{n+1}}, \epsilon)<\left(\dfrac{1}{\epsilon} \right)^{(D+\frac{3}{n+1})N_{n+1}}.
\end{align}
We define $g'_{n+1}: Q_{n+1} \rightarrow B_{1}^{\circ}(V)$ and $g_{n+1, \lambda}: P_{n+1} \rightarrow B_{1}^{\circ}(V)~(\lambda \in [N_{n+1}])$ as small perturbations of $\tilde{g}'_{n+1}$ and $\tilde{g}_{n+1, \lambda}$ respectively. By Lemma \ref{key} (2), we can assume that they satisfy Condition \ref{con} (1). From $(\ref{per2})$  and $(\ref{per1})$ we can assume that the perturbations are so small that they satisfy
\begin{align}\label{f}
&\|g_{n+1}'(\pi_{n+1}(x)- f_{n}(x))\|<\min (\dfrac{\epsilon_{n}}{8}, \dfrac{\delta_{n}}{2}), \\&
\|g_{n+1, \lambda}(\pi_{n+1}(x))-f_{n}(T^{\lambda}x)\|<\min(\dfrac{\epsilon_{n}}{8}, \dfrac{\delta_{n}}{2}).
\end{align}
Moreover, from Claim \ref{num2}, we can assume that $g_{n+1}:=(g_{n+1, 0}, \cdots, g_{n+1, N_{n+1}-1})$ satisfies
$$\#(\mathcal{X}, ({g}_{n+1}\circ \pi_{n+1})^{*}\|\cdot\|_{N_{n+1}},  \varphi_{N_{n+1}}, \epsilon)< 2^{N_{n+1}}\left( \dfrac{1}{\epsilon} \right)^{(D+\frac{4}{n})N_{n+1}}$$ for all $\epsilon_{n+1}\leq \epsilon \leq \epsilon_{n}.$ On the other hand, from (\ref{w}), for $0<\epsilon\leq\epsilon_{n+1}$
$$\sum\limits_{\Delta \subset P_{n+1}}\left( \dfrac{1}{\epsilon}\right)^{\sup\limits_{\pi_{n+1}^{-1}(O_{P_{n+1}}(\Delta))}\varphi_{N_{n+1}} }\#(g_{n+1}(\Delta), \|\cdot\|_{N_{n+1}}, \epsilon)<\left(\dfrac{1}{\epsilon} \right)^{(D+\frac{3}{n+1})N_{n+1}}.$$
Thus we have established Condition \ref{con} (2) and (3) for $(n+1)$-th step. From Lemma \ref{tiling}, we can take a continuous function $\psi_{n+1}: \mathcal{X}\rightarrow [0,1]$ satisfying Condition \ref{con} (4). The map $f_{n+1}$ defined by $(\ref{5.6})$ is a $(1/n)$-embedding with respect to $d$ by Condition \ref{con} (1). Since $(1/n)$-embedding is an open condition, we can take $\delta_{n+1}>0$ satisfying Condition \ref{con} (5). From $(\ref{f})$,
\begin{align}
\|f_{n+1}(x)- f_{n}(x)\|<\min\left( \dfrac{\epsilon_{n}}{8}, \dfrac{\delta_{n}}{2} \right).
\end{align}
This shows Condition \ref{con} (6). We have finished the constructed of all data for the $(n+1)$-th step.
\end{proof}

{\bf Acknowledgements.}
The first and second author were supported by NNSF of China (11671208 and 11431012). We would like to express our gratitude to Tianyuan Mathematical Center in Southwest China, Sichuan University and Southwest Jiaotong University for their support and hospitality.

\end{document}